\newtheorem{theorem}{Theorem}[section]
\newtheorem{lemma}[theorem]{Lemma}
\newtheorem{remark}[theorem]{Remark}
\numberwithin{equation}{section}
\providecommand{\keywords}[1]{\textbf{Key words.} #1}
\title{A locking-free discontinuous Galerkin method for linear elastic Steklov eigenvalue problem}
\author{Yanjun Li$^{1,2}$,  Hai Bi$^{1,}$\footnote{Corresponding author. Email address:  bihaimath@gznu.edu.cn}
 \\\\
{\small $^1$ School of Mathematical Sciences,
Guizhou Normal University,Guiyang,  $550001$,  China}\\{\small $^2$ School of Big Data Applications and Economics, Guizhou University of Finance and Economics,}\\ {\small Guiyang, 550025, China}
}
\begin{document}
\date{}
\maketitle
\begin{abstract}
In this paper, a discontinuous Galerkin finite element method of Nitsche's version for the Steklov eigenvalue problem in linear elasticity is presented.  The a priori error estimates are analyzed under a low regularity condition, and the robustness with respect to nearly incompressible materials (locking-free) is proven. Furthermore, some numerical experiments are reported to show the effectiveness and robustness of the proposed  method.
\end{abstract}
\keywords{ Steklov-Lam\'{e} eigenvalue problem, discontinuous Galerkin method, linear elasticity, locking free, a priori error estimate }


\section{Introduction}
\indent In structural analysis and engineering design, linear elasticity problems are extensively
encountered when studying how solid objects deform and become internally stressed on prescribed loading conditions. Due to the wide range of applications, the approximate computation for elastic equations/eigenvalue problems has attracted the attention of academic circles, for instance, \cite{Brenner1992,Hernandez2009,Meddahi2013,oden,Ovtchinnikov,Russo,Walsh}, etc.
Recently, \cite{Dom¨ªnguez2021} introduced a Steklov-Lam\'{e} eigenproblem in linear elasticity. The study of the Dirichlet-to-Neumann map for linear elasticity is important in elastostatic problem. \cite{Dom¨ªnguez2021} studied the existence of eigenpairs of this problem and explored its conforming finite element approximations.
However, it is well known that the numerical approximations of elasticity problems by conforming finite element methods may significantly degrade as the Lam\'{e} dilatation parameter approaches a certain critical limit. This non-robustness of the finite element method is called ``locking''. To avoid the locking effects, several approaches have been investigated such as nonconforming finite element methods in \cite{Brenner1992,Kouhia}, mixed finite element methods in \cite{Brezzi1991,Chapelle}, higher-order methods in \cite{Vogelius}, discontinuous Galerkin finite element method in \cite{Hansbo2002,Wihler2004,Wihler2006} and so on.
In this paper, we discuss a discontinuous Galerkin finite element method (DGFEM) of Nitsche's version for the Steklov-Lam\'{e} eigenproblem in linear elasticity.

The DGFEM have been attractive due to their flexibility in handling general meshes, non-uniformity in degree of approximation and capturing the rough solutions more accurately. Hence, the DGFEM has been developed and applied to solve various problems, for example, elliptic problems in \cite{Arnold2002,Douglas1976,Rusten1996,Riviere2008,Wihler2002}, hyperbolic problems in
\cite{Antonietti2022,Antonietti2018,Johnson1986,Brezzi2004,Johnson1993,Krivodonova2004}, Navier-Stokes equations in \cite{Bassi1997,Pietro2009}, convection-diffusion systems in \cite{Cockburn1998b,Ern2005}, etc. In particular, since
the DGFEM was found to be able to overcome the locking phenomenon (see \cite{Hansbo2002,Wihler2004,Wihler2006,Pietro2013}), this method has been studied intensively for elasticity problems.

For the Steklov-Lam\'{e} eigenproblem in linear elasticity, \cite{Dom¨ªnguez2021} gave the a priori error estimates of conforming
finite element approximations under the condition that the eigenfunctions have high smoothness. For the case that the eigenfunctions have low smoothness or regularity, we have not yet seen the relevant numerical analysis.
In this paper, for the Steklov-Lam\'{e} eigenproblem, due to the features of DG finite element space, to derive the error estimate of eigenvalues we first need to prove that the discrete solution operator converges in $L^2(\partial\Omega)$ norm, and it needs us to derive the a prior error estimates of the corresponding source problem with right-hand side belongs to $\boldsymbol{L}^2(\partial\Omega)$. In this case, the solution $\boldsymbol{u}\in \boldsymbol{H}^{1+\frac{1}{2}}(\Omega)$, that is to say, we shall establish the a priori error estimates of DG discretization under the condition of low regularity.
In the analysis we first derive the a priori error estimates, Theorems 3.7 and 3.8, of the source problem associated with the Steklov-Lam\'{e} eigenproblem when the loading function has low regularity. We make clear the dependence of the Lam\'{e} parameters in each step of the proof  and get the estimates in which the constants are clearly independent of the Lam\'{e} parameters, which means our method is locking-free.
Then we prove that $T_h$, the discrete solution operator, converges to $T$, the generalized solution operator, in the sense of $ L^2(\partial \Omega)$ norm, thus, by using Babu\v{s}ka-Osborn spectral approximation theory \cite{Babuska1991} we obtain the error estimation of eigenvalues and eigenfunctions.

To support our theoretical analysis, we exhibit a large set of numerical experiments.
We choose four domains including 2$D$ and 3$D$ cases to test the convergence and robustness of the DGFEM for the Steklov eigenvalues when different values of Lam\'{e} parameter $\lambda$ are considered. Numerical results show that DGFEM is efficient for solving the Steklov-Lam\'{e} eigenproblem, and the approximations obtained by DGFEM converge uniformly with respect of $\lambda$, which is consistent with our expectations.

The remainder of the paper is organized as follows. The Steklov-Lam\'{e} eigenproblem and its DG discrete formulation are described in Section 2. The a priori error estimates of DGFEM for the source problem and the eigenproblem are presented in Section 3. Finally, in Section 4 some numerical experiments are provided.

Throughout this paper, we use the following notations. Vector fields will be denoted by bold symbols whereas tensor fields are denoted by bold Greek letters. The notation $\boldsymbol{a} \cdot \boldsymbol{b}$ is the standard dot product with induced norm $\|\cdot\|$. For tensors $\boldsymbol{\sigma}, \boldsymbol{\tau} \in \mathbb{R}^{d \times d}$, the notation $\boldsymbol{\sigma}: \boldsymbol{\tau}:=\operatorname{tr}\left(\boldsymbol{\tau}^{\mathrm{T}}\boldsymbol{\sigma}\right)$  where tr($\cdot )$ denotes the trace of a tensor (sum of the main diagonal). This inner product induces the Frobenius norm for tensors, also denoted as $\|\cdot\|$.

Let $H^{s}(\Omega)$ denote the usual Sobolev space of scalar fields with  $s\in \mathbb{R}$  on $\Omega$ and $H^{0}(\Omega)=L^2(\Omega)$, $\|\cdot\|_{s,\Omega}$ is the norm on $H^{s}(\Omega)$, and we just write $\|\cdot\|_{s}$ for simplicity. Let $H^{s}(\partial\Omega)$ denote the usual Sobolev space of scalar fields with order $s$ on $\partial\Omega$ with the norm $\|\cdot\|_{s,\partial\Omega}$. For Sobolev space of vector fields, we use the notation $\boldsymbol{H}^{s}(\Omega)$ and $\boldsymbol{H}^{s}(\partial\Omega)$ with the corresponding norm, also denoted by
$\|\cdot\|_{s}$ and $\|\cdot\|_{s,\partial\Omega}$, 
respectively.

Throughout this paper, we use the letter $C$ to denote a generic positive constant independent of the Lam\'{e} parameters $\mu$, $\lambda$ and the mesh diameter which may take different values in different contexts, and write $a \lesssim b$ when $a \leq C b$ and $a \gtrsim b$ when $a \geq C b$ for some positive constant $C$ for simplicity.

\section{The Steklov-Lam\'{e} eigenproblem and its DG approximation}\label{sec2}
In this paper, we assume that an isotropic and linearly elastic material occupies the domain $\Omega$ in $ \mathbb{R}^{d}~(d=2,3)$ and
$\Omega$ is a bounded convex polygonal or a smooth domain.
Consider the following Steklov-Lam\'{e} eigenproblem (see \cite{Dom¨ªnguez2021}): find non-zero displacement vector $\boldsymbol{u}$ and the frequencies $\omega\in \mathbb{R}$ satisfying
\begin{equation}\label{eq1}
\begin{cases}
-\mathrm{\mathrm{div}} \boldsymbol{\sigma}(\boldsymbol{u}) = \boldsymbol{0},~~ \rm{in} ~\Omega,\\
\boldsymbol{\sigma}(\boldsymbol{u})\boldsymbol{n}=\omega p\boldsymbol{u}, ~~\rm{on}~\partial\Omega,
\end{cases}
\end{equation}
where $\boldsymbol{n}$ is the outer unit normal vector on the boundary $\partial \Omega$, $\boldsymbol{\sigma(u)}$ is the Cauchy stress tensor given by the generalized Hooke law
\begin{align*}
\boldsymbol{\sigma}(\boldsymbol{u}) = 2\mu\boldsymbol{\varepsilon}(\boldsymbol{u})+\lambda tr(\boldsymbol{\varepsilon}(\boldsymbol{u}))\boldsymbol{I}
=2\mu\boldsymbol{\varepsilon}(\boldsymbol{u})+\lambda (\mathrm{div}\boldsymbol{u})\boldsymbol{I},
\end{align*}
where $\boldsymbol{I}\in \mathbb{R}^{d\times d}$ is the identity matrix, $\boldsymbol{\varepsilon}(\boldsymbol{u})$ is the strain tensor given by

\begin{align*}
\boldsymbol{\varepsilon}(\boldsymbol{u})=\frac{1}{2}(\nabla \boldsymbol{u}+(\nabla \boldsymbol{u})^{T})
\end{align*}
and $\nabla \boldsymbol{u}$ is the displacement gradient tensor.
The parameters $\lambda$ and $\mu$ are called the Lam\'{e} parameters satisfying $0<\mu_{1}<\mu<\mu_{2}$ and $0<\lambda<\infty$,
while the density of material $p\in L^{\infty}(\partial\Omega)$ is assumed to be strictly positive on $\partial \Omega$.

It is easy to know that this problem has zero eigenvalues, as suggested in \cite{Babuska1991}, and a shift needs to be added to the formulation to obtain a coercive bilinear form. We employ the following weak formulation for (\ref{eq1}): find $(\rho ,\boldsymbol{u})\in \mathbb{R}\times \boldsymbol{H}^{1}(\Omega)$ such that
\begin{align}\label{eq2}
a(\boldsymbol{u},\boldsymbol{v})= \rho b(\boldsymbol{u},\boldsymbol{v}),~ \forall \boldsymbol{v}\in  \boldsymbol{H}^{1}(\Omega),
\end{align}
where $\rho=\omega+1$, and the bilinear forms $a(\cdot,\cdot)$ and $b(\cdot,\cdot)$ are defined as follows, respectively:
\begin{align*}
&a(\boldsymbol{u},\boldsymbol{v}):= \int_{\Omega} \boldsymbol{\sigma}(\boldsymbol{u}):\boldsymbol{\varepsilon}(\boldsymbol{v})dx+\int_{\partial\Omega} p\boldsymbol{u}\cdot\boldsymbol{v}ds,\\
&\quad\quad\quad=2\mu\int_{\Omega}\boldsymbol{\varepsilon}(\boldsymbol{u}):\boldsymbol{\varepsilon}(\boldsymbol{v})dx+\lambda \int_{\Omega}(\mathrm{div}\boldsymbol{u})(\mathrm{div}\boldsymbol{v})dx
         +\int_{\partial\Omega} p\boldsymbol{u}\cdot\boldsymbol{v}ds,\quad\forall \boldsymbol{u}, \boldsymbol{v}\in  \boldsymbol{H}^{1}(\Omega),\\
&b(\boldsymbol{u},\boldsymbol{v}):=\int_{\partial\Omega} p\boldsymbol{u}\cdot\boldsymbol{v}ds,\quad\forall \boldsymbol{u}, \boldsymbol{v}\in  \boldsymbol{H}^{1}(\Omega).
\end{align*}
Without loss of generality, we assume that $p\equiv1$ in the rest of the paper.

We now specify some notations for the spatial discretization. Let ${\mathcal{T}_{h}}=\{K\}$ be a shape-regular mesh (see \cite{Ciarlet1991,Ern2004}) of $\Omega$ where $K$ is a triangle if $d=2$ or a tetrahedron if $d=3$. For each $ K\in\mathcal{T}_{h}$, we denote by $\boldsymbol{n}_{K}$ the unit outward normal vector on the boundary $\partial K$.
Let $\mathcal{E}_h$ denote the set of all edges/faces in the mesh. We decompose $\mathcal{E}_h$
into disjoint sets $\mathcal{E}_h^i$ and $\mathcal{E}_h^b$ which consists of interior edges/faces and those on the boundary, respectively.
For any face $e\in \mathcal{E}_{h} $, the diameter of $e$  is denoted by $h_{e}$, the diameter of a cell~$K\in\mathcal{T}_{h}$ is denoted by $h_{K}$ and $h=\max_{K\in \mathcal{T}_{h}}\{h_{K}\}$. We also require that the mesh satisfies the condition $h_e^{-1}\lesssim h_K^{-1}$. Denote the average and jump of $\boldsymbol{v}$ on $e$ by
\begin{equation*}
\{\boldsymbol{v}\}=\begin{cases}
\frac{1}{2}(\boldsymbol{v}^{+}+\boldsymbol{v}^{-}),~~ e\in ~\mathcal{E}_{h}^{i},\\
\boldsymbol{v}^{+},~~~~~~~~~~~~~~ e \in~\mathcal{E}_{h}^{b},
\end{cases}
[\![\boldsymbol{v}]\!]=\begin{cases}
\boldsymbol{v}^{+}-\boldsymbol{v}^{-},~~ e\in ~\mathcal{E}_{h}^{i},\\
\boldsymbol{v}^{+},~~~~~~~~~~ e\in~\mathcal{E}_{h}^{b},
\end{cases}
\end{equation*}
 where
 $\boldsymbol{v}^{+}=\boldsymbol{v}|_{K^{+}},\boldsymbol{v}^{-}=\boldsymbol{v}|_{K^{-}}$.

 Define the DG element space:
 \begin{align*}
  \boldsymbol{S}^{h}=\{\boldsymbol{v}\in L^{2}(\Omega)^{d}: \boldsymbol{v}|_{K}\in P_{k}(K)^{d},~\forall K\in\mathcal{T}_{h}\},
 \end{align*}
 where  $P_{k}(K)$ is the polynomial space of degree no more than $k~ (k\geq 1)$ on $K$. Introduce the piecewise 
 Sobolev functions space of degree $s$:
 \begin{align*}
 \boldsymbol{H}^{s}(\mathcal{T}_{h})=\{\boldsymbol{v}\in L^{2}(\Omega)^{d}: \boldsymbol{v}|_{K}\in H^{s}(K)^{d},~\forall K\in\mathcal{T}_{h}\}.
 \end{align*}

The DGFEM includes three common types: the symmetric interior penalty Galerkin method (SIPG) which comes from Nitsche method \cite{Nitche1971}, the non-symmetric interior penalty Galerkin method (NIPG) (cf. see \cite{Oden1998,Wihler2004}), and the incomplete interior penalty Galerkin method (IIPG) (see, e.g., \cite{Ortner2007}).
 In this paper, we discuss the SIPG method for (\ref{eq2}). Define the bilinear forms $a_{h}(\cdot,\cdot)$ and $b_{h}(\cdot,\cdot)$ as follows:
 \begin{align*}
\nonumber a_{h}(\boldsymbol{u_{h}},\boldsymbol{v_{h}})&=2\mu\left( \sum\limits_{K\in\mathcal{T}_{h}}\int_{K}\boldsymbol{\varepsilon}(\boldsymbol{u_{h}}):\boldsymbol{\varepsilon}(\boldsymbol{v_{h}})dx
-\sum_{e\in \mathcal{E}^{i}_{h}}\int_{e}\{\boldsymbol{\varepsilon}(\boldsymbol{u_{h}})\boldsymbol{n}\}\cdot[\![\boldsymbol{v_{h}}]\!]ds \right.\\
\nonumber&\left.
-\sum_{e\in \mathcal{E}^{i}_{h}}\int_{e}\{\boldsymbol{\varepsilon}(\boldsymbol{v_{h}})\boldsymbol{n}\}\cdot[\![\boldsymbol{u_{h}}]\!]ds
+\sum_{e\in \mathcal{E}^{i}_{h}}\frac{\gamma_{\mu} }{h_{e}} \int_{e}[\![\boldsymbol{u_{h}}]\!]\cdot[\![\boldsymbol{v_{h}}]\!]ds
\right)
 \end{align*}
 \begin{align*}
\nonumber&+\lambda\left(
\sum \limits_{K\in\mathcal{T}_{h}}\int_{K}(\mathrm{div}\boldsymbol{u_{h}})(\mathrm{div}\boldsymbol{v_{h}})dx
-\sum_{e\in \mathcal{E}^{i}_{h}}\int_{e}\{\mathrm{div}\boldsymbol{u_{h}}\}[\![\boldsymbol{v_{h}}\cdot\boldsymbol{n}]\!]ds\right.\\
\nonumber&\left.
-\sum_{e\in \mathcal{E}^{i}_{h}}\int_{e}\{\mathrm{div}\boldsymbol{v_{h}}\} [\![\boldsymbol{u_{h}}\cdot\boldsymbol{n}]\!]ds
+ \sum_{e\in \mathcal{E}^{i}_{h}}\frac{\gamma_{\lambda}{\color{red} }}{h_{e}} \int_{e}[\![\boldsymbol{u_{h}}\cdot\boldsymbol{n}]\!][\![\boldsymbol{v_{h}}\cdot\boldsymbol{n}]\!]ds\right)
+\sum_{e\in \mathcal{E}^{b}_{h}}\int_{e}\boldsymbol{u_{h}}\cdot\boldsymbol{v_{h}}ds,\\
b_{h}(\boldsymbol{u_{h}},\boldsymbol{v_{h}})&=\sum \limits_{e\in\mathcal{E}^{b}_{h}}\int_{e}\boldsymbol{u_{h}}\cdot\boldsymbol{v_{h}}ds.
\end{align*}

\noindent Then the DG finite element approximation for (\ref{eq2}) states as: find $(\rho_{h},\boldsymbol{u_{h}})\in \mathbb{R} \times\boldsymbol{S}^{h},\boldsymbol{u_{h}}\neq\boldsymbol{0},\rho_{h}=\omega_{h}+1$, such that
\begin{align}\label{eq3}
a_{h}(\boldsymbol{u_{h}},\boldsymbol{v_{h}})=\rho_{h}b_{h}(\boldsymbol{u_{h}},\boldsymbol{v_{h}}), \quad\forall \boldsymbol{v_{h}}\in \boldsymbol{S}^{h}.
\end{align}
Define the DG norm:
 \begin{align}\label{eq4}
\nonumber \|\boldsymbol{u_{h}}\|^{2}_{\mathrm{dG}}
&=2\mu\sum_{K\in {\mathcal{T}_{h}}}\|\boldsymbol{\varepsilon}(\boldsymbol{u_{h}})\|^{2}_{0,K}+ 2\mu\sum_{e\in \mathcal{E}^{i}_{h}}\gamma_{\mu} h^{-1}_{e}\|[\![\boldsymbol{u_{h}}]\!]\|^{2}_{0,e}+\lambda \sum_{K\in {\mathcal{T}_{h}}}\|\mathrm{div}\boldsymbol{u_{h}}\|^{2}_{0,K}\\
&\quad+\lambda\sum_{e\in \mathcal{E}^{i}_{h}}\gamma _{\lambda} h^{-1}_{e}\|[\![\boldsymbol{u_{h}}\cdot \boldsymbol{n}]\!]\|^{2}_{0,e}+\sum_{e\in \mathcal{E}^{b}_{h}}\|\boldsymbol{u_{h}}\|^{2}_{0,e},
\end{align}
and the energy-like norm:
\begin{align}\label{eq5}
&\|\boldsymbol{u_{h}}\|^{2}_{h}=\|\boldsymbol{u_{h}}\|^{2}_{\mathrm{dG}}+2\mu\sum_{e\in \mathcal{E}^{i}_{h}}h_{e}\|\{\boldsymbol{\varepsilon}(\boldsymbol{u_{h}})\boldsymbol{n}\}\|^{2}_{0,e}
+\lambda\sum_{e\in \mathcal{E}^{i}_{h}}h_{e}\|\{\mathrm{div}\boldsymbol{u_{h}}\}\|^{2}_{0,e}.
\end{align}

In order to show that the discretization (\ref{eq3}) is stable, first we will show that $a_{h}(\cdot, \cdot)$ is coercive on $\boldsymbol{S}^{h}\times \boldsymbol{S}^{h}$. To do so, we need the following inverse inequalities which can be referred to Lemma 4 in \cite{Hansbo2002}.
\begin{lemma}\label{lemma2.1}
For $\boldsymbol{v} \in \boldsymbol{S}^{h}$, there are constants $C_{\mu}$ and $C_{\lambda}$, independent of the diameter $h_{e}$, $\mu$ and $\lambda$, such that
\begin{align}\label{eq6}
\|h_{e}^{\frac{1}{2}} \boldsymbol{\varepsilon}(\boldsymbol{v}) \boldsymbol{n}\|_{0,\partial K}^{2} \leq C_{\mu}\|\boldsymbol{\varepsilon}(\boldsymbol{v})\|_{0,K}^{2},\\
\label{eq7}
\|h_{e}^{\frac{1}{2}} \mathrm{div} \boldsymbol{v}\|_{0,\partial K}^{2} \leq C_{\lambda}\|\mathrm{div} \boldsymbol{v}\|_{0,K}^{2} .
\end{align}
\end{lemma}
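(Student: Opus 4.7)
The plan is to reduce both bounds to the standard discrete (polynomial) trace inequality on a shape-regular simplex: for any polynomial $q$ of fixed maximal degree defined on $K$ one has $\|q\|_{0,\partial K}^{2} \lesssim h_K^{-1}\|q\|_{0,K}^{2}$, with a constant that depends only on the polynomial degree and the shape-regularity constant of $\mathcal{T}_h$. Since $\boldsymbol{v}|_K\in P_k(K)^d$, every component of $\boldsymbol{\varepsilon}(\boldsymbol{v})$ and the scalar field $\mathrm{div}\,\boldsymbol{v}$ are polynomials of degree at most $k-1$ on $K$, so this trace inequality is directly applicable to them.

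For \eqref{eq6} I would fix a face $e\subset\partial K$, apply the polynomial trace inequality componentwise to $\boldsymbol{\varepsilon}(\boldsymbol{v})\boldsymbol{n}$ on $K$ to obtain $\|\boldsymbol{\varepsilon}(\boldsymbol{v})\boldsymbol{n}\|_{0,e}^{2}\lesssim h_K^{-1}\|\boldsymbol{\varepsilon}(\boldsymbol{v})\|_{0,K}^{2}$, and then invoke the mesh hypothesis $h_e^{-1}\lesssim h_K^{-1}$ (equivalently $h_e\sim h_K$ from shape-regularity) to replace $h_K^{-1}$ by $h_e^{-1}$. Multiplying by $h_e$ and summing over the faces of $\partial K$ (whose number is bounded by $d+1$) yields the estimate with a constant $C_\mu$ that depends only on $k$ and the shape-regularity constant. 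The bound \eqref{eq7} is proved identically, with the scalar polynomial $\mathrm{div}\,\boldsymbol{v}\in P_{k-1}(K)$ playing the role of $\boldsymbol{\varepsilon}(\boldsymbol{v})\boldsymbol{n}$.

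There is no genuine obstacle here; the only point that deserves to be highlighted is that the constants labelled $C_\mu$ and $C_\lambda$ in the statement are in fact independent of the Lamé parameters $\mu$ and $\lambda$ (neither parameter appears on either side of either inequality, the subscripts being merely a bookkeeping device for the coercivity argument that follows), and independent of $h_e$, since the scaling of the polynomial trace inequality produces exactly one factor of $h_K^{-1}$ which is absorbed by the explicit $h_e^{1/2}$ on the left-hand side.
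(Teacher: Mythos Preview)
Your proposal is correct and follows the standard route: apply the discrete trace inequality for polynomials on a shape-regular simplex to the piecewise-polynomial fields $\boldsymbol{\varepsilon}(\boldsymbol{v})$ and $\mathrm{div}\,\boldsymbol{v}$, then use $h_e\sim h_K$. The paper itself does not give a proof of this lemma but simply refers to Lemma~4 in \cite{Hansbo2002}, which is proved by exactly the scaling/trace argument you outline, so your approach coincides with the intended one.
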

Referring to Proposition 6 in \cite{Hansbo2002}, we have the following coercive property.
\begin{lemma}
If $\gamma_{\mu;\lambda}\geq C_{\mu;\lambda}/(1-m)^{2}$ for $0<m<1$, then the coerciveness of $a_{h}(\cdot,\cdot)$ holds:
\begin{align}\label{eq8}
m\|\boldsymbol{v_{h}}\|_{\mathrm{dG}}^{2}\leq  a_{h}(\boldsymbol{v_{h}},\boldsymbol{v_{h}}),\quad for~all~\boldsymbol{v_{h}}\in \boldsymbol{S}^{h}
\end{align}
independent of $h$.
\end{lemma}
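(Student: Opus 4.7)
The plan is a standard DG coercivity argument: substitute $v_h$ into $a_h(\cdot,\cdot)$, apply Cauchy--Schwarz together with Young's inequality with a free parameter on the consistency (cross) terms, absorb the resulting face trace norms into the bulk energy via the inverse inequalities in Lemma \ref{lemma2.1}, and then close against the penalty terms by picking the free parameter optimally and using the hypotheses on $\gamma_\mu$ and $\gamma_\lambda$. Since the $\mu$-block and the $\lambda$-block of $a_h(v_h,v_h)$ have the same algebraic structure (and the boundary term $\sum_{e\in\mathcal{E}_h^b}\|v_h\|_{0,e}^2$ is nonnegative and already present in $\|\cdot\|_{\mathrm{dG}}$), I would treat them separately with identical arguments.

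For the $\mu$-block, the only sign-indefinite contribution is $-2\sum_{e\in\mathcal{E}_h^i}\int_e \{\boldsymbol{\varepsilon}(\boldsymbol{v}_h)\boldsymbol{n}\}\cdot[\![\boldsymbol{v}_h]\!]\,ds$ (the two symmetric consistency terms coincide when $u_h=v_h$). Inserting the scaling $h_e^{1/2}\cdot h_e^{-1/2}$ and applying Cauchy--Schwarz followed by Young's inequality with parameter $\epsilon>0$, I bound its absolute value by
\begin{align*}
\epsilon\sum_{e\in\mathcal{E}_h^i}h_e\|\{\boldsymbol{\varepsilon}(\boldsymbol{v}_h)\boldsymbol{n}\}\|_{0,e}^2+\epsilon^{-1}\sum_{e\in\mathcal{E}_h^i}h_e^{-1}\|[\![\boldsymbol{v}_h]\!]\|_{0,e}^2.
\end{align*}
The inverse trace inequality (\ref{eq6}), applied on each element sharing the edge $e$, converts the first sum into $C_\mu\sum_{K\in\mathcal{T}_h}\|\boldsymbol{\varepsilon}(\boldsymbol{v}_h)\|_{0,K}^2$, absorbing the element-face incidence counting into the constant $C_\mu$. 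Thus the $\mu$-block is bounded below by
\begin{align*}
2\mu\Big[(1-\epsilon C_\mu)\sum_{K\in\mathcal{T}_h}\|\boldsymbol{\varepsilon}(\boldsymbol{v}_h)\|_{0,K}^2+(\gamma_\mu-\epsilon^{-1})\sum_{e\in\mathcal{E}_h^i}h_e^{-1}\|[\![\boldsymbol{v}_h]\!]\|_{0,e}^2\Big].
\end{align*}
Choosing $\epsilon=(1-m)/C_\mu$ makes the bulk coefficient equal to $m$, and the hypothesis $\gamma_\mu\geq C_\mu/(1-m)^2$ gives $\gamma_\mu-\epsilon^{-1}=\gamma_\mu-C_\mu/(1-m)\geq m\gamma_\mu$. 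Hence the $\mu$-block is bounded below by $m$ times the $\mu$-part of $\|\boldsymbol{v}_h\|_{\mathrm{dG}}^2$.

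The $\lambda$-block is handled in exactly the same fashion with the roles of $\boldsymbol{\varepsilon}(\boldsymbol{v}_h)\boldsymbol{n}$, $[\![\boldsymbol{v}_h]\!]$, $\gamma_\mu$, $C_\mu$ replaced by $\mathrm{div}\,\boldsymbol{v}_h$, $[\![\boldsymbol{v}_h\cdot\boldsymbol{n}]\!]$, $\gamma_\lambda$, $C_\lambda$, invoking the second inverse inequality (\ref{eq7}) and the hypothesis $\gamma_\lambda\geq C_\lambda/(1-m)^2$. Adding the two blocks together with the unchanged boundary contribution yields (\ref{eq8}). I do not expect a genuine obstacle; the only technical point requiring care is the element-face incidence accounting when passing from $\sum_{e}h_e\|\{\boldsymbol{\varepsilon}(\boldsymbol{v}_h)\boldsymbol{n}\}\|_{0,e}^2$ to $\sum_K\|\boldsymbol{\varepsilon}(\boldsymbol{v}_h)\|_{0,K}^2$ (each interior face contributes to at most two elements), together with the bookkeeping that must keep the final coercivity constant dependent only on $m$, \emph{independent} of $\mu$, $\lambda$ and $h$, since this $\lambda$-independence is exactly what will drive the locking-free analysis in the sequel.
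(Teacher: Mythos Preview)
Your proposal is correct and follows essentially the same approach as the paper's proof: split $a_h(\boldsymbol{v}_h,\boldsymbol{v}_h)$ into the $\mu$-block, the $\lambda$-block, and the boundary term, control the sign-indefinite consistency terms by Cauchy--Schwarz plus Young's inequality with a free parameter, absorb the face-trace contribution via the inverse inequalities (\ref{eq6})--(\ref{eq7}), and then choose the parameter so that the bulk coefficient equals $m$ while the hypothesis $\gamma_{\mu;\lambda}\geq C_{\mu;\lambda}/(1-m)^2$ forces the penalty coefficient to be at least $m\gamma_{\mu;\lambda}$. The only cosmetic difference is that the paper rewrites the edge sum as a sum over $\partial K\setminus\partial\Omega$ before applying (\ref{eq6}), whereas you keep the edge sum and handle the element--face incidence counting explicitly; both routes give the same constants.
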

\begin{proof}
For any $\boldsymbol{v_{h}}\in \boldsymbol{S}^{h}$, we have
\begin{align}\label{eq9}
\nonumber &a_{h}(\boldsymbol{v_{h}},\boldsymbol{v_{h}})
= 2\mu \left ( \sum \limits_{K\in\mathcal{T}_{h}} \|\boldsymbol{\varepsilon}(\boldsymbol{v_{h}})\|^{2}_{0,K}- 2\sum_{e\in \mathcal{E}^{i}_{h}}\int_{e}\{\boldsymbol{\varepsilon}(\boldsymbol{v_{h}})\boldsymbol{n}\}\cdot[\![\boldsymbol{v_{h}}]\!]ds
+\sum_{e\in \mathcal{E}^{i}_{h}}\gamma_{\mu} \|h_{e}^{-\frac{1}{2}}[\![\boldsymbol{v_{h}}]\!]\|^{2}_{0,e}\right)\\
\nonumber&+\lambda\left(\sum \limits_{K\in\mathcal{T}_{h}} \|\mathrm{div} \boldsymbol{v_{h}} \|^{2}_{0,K}-2\sum_{e\in \mathcal{E}^{i}_{h}}\int_{e}\{\mathrm{div} \boldsymbol{v_{h}} \}[\![\boldsymbol{v_{h}} \cdot \boldsymbol{n}]\!]ds
 +\sum_{e\in \mathcal{E}^{i}_{h}}\int_{e}\gamma_{\lambda}\|h_{e}^{-\frac{1}{2}}[\![\boldsymbol{v_{h}} \cdot \boldsymbol{n}]\!]\|^{2}_{0,e}\right)\\
\nonumber&+\sum_{e\in \mathcal{E}^{b}_{h}}\|\boldsymbol{v_{h}}\|^{2}_{0,e}\\
&\equiv I+II+III.
\end{align}
To show the coercivity we need to bound the potentially negative terms by the positive terms. We first estimate the second term in $I$. Using the Cauchy-Schwarz inequality, the inverse inequality (\ref{eq6}) and Young's inequality, we derive that
\begin{align*}
 &\sum_{e\in \mathcal{E}^{i}_{h}}\int_{e}\{\boldsymbol{\varepsilon}(\boldsymbol{v_{h}})\boldsymbol{n}\}\cdot[\![\boldsymbol{v_{h}}]\!]ds
=\sum_{K\in \mathcal{T}_{h}} \int_{\partial K \backslash \partial \Omega}
 \{\boldsymbol{\varepsilon}(\boldsymbol{v_{h}})\boldsymbol{n}\}\cdot[\![\boldsymbol{v_{h}}]\!]ds\\
&  \leq  \sum_{K\in \mathcal{T}_{h}}\|h_{e}^{\frac{1}{2}}\{\boldsymbol{\varepsilon}(\boldsymbol{v_{h}})\boldsymbol{n}\}\|_{0,\partial K\backslash \partial\Omega}\|h_{e}^{-\frac{1}{2}}[\![\boldsymbol{v_{h}}]\!]\|_{0,\partial K\backslash \partial\Omega}
\lesssim
  \sum_{K\in \mathcal{T}_{h}}\|h_{e}^{\frac{1}{2}}\boldsymbol{\varepsilon}(\boldsymbol{v_{h}})\boldsymbol{n}\|_{0,\partial K\backslash \partial\Omega}\|h_{e}^{-\frac{1}{2}}[\![\boldsymbol{v_{h}}]\!]\|_{0,\partial K\backslash \partial\Omega}\\
&  \lesssim   \sum_{K\in \mathcal{T}_{h}} \sqrt{C_{\mu}}\|\boldsymbol{\varepsilon}(\boldsymbol{v_{h}})\|_{0,K}\|h_{e}^{-\frac{1}{2}}[\![\boldsymbol{v_{h}}]\!]\|_{0,\partial K\backslash \partial\Omega}
 \lesssim \sum\limits_{K\in\mathcal{T}_{h}}\left (\epsilon_{\mu}C_{\mu}\|\boldsymbol{\varepsilon}(\boldsymbol{v_{h}})\|^{2}_{0,K}
 +\frac{1}{4\epsilon_{\mu} }\|h_{e}^{-\frac{1}{2}}[\![\boldsymbol{v_{h}}]\!]\|^{2}_{0,\partial K\backslash \partial\Omega}\right),
\end{align*}
and taking $\epsilon_{\mu} =(1-m)/2C_{\mu}$ and $\gamma_{\mu}\geq m\gamma_{\mu}+1/(2\epsilon_{\mu}) =C_{\mu}/(1-m)^2$ we have
\begin{align*}
\nonumber I&\geq 2\mu\sum \limits_{K\in\mathcal{T}_{h}}\left( (1-2\epsilon_{\mu}C_{\mu})\|\boldsymbol{\varepsilon}(\boldsymbol{v_{h}})\|^{2}_{0,K}
+(\gamma_{\mu}-\frac{1}{2\epsilon_{\mu}}) \|h_{e}^{-\frac{1}{2}}[\![\boldsymbol{v_{h}}]\!]\|^{2}_{0,\partial K\backslash \partial\Omega}\right)\\
&\geq m2\mu\sum \limits_{K\in\mathcal{T}_{h}}\left( \|\boldsymbol{\varepsilon}(\boldsymbol{v_{h}})\|^{2}_{0,K}
 + \gamma_{\mu}\|h_{e}^{-\frac{1}{2}}[\![\boldsymbol{v_{h}}]\!]\|^{2}_{0,\partial K\backslash \partial\Omega}\right).
\end{align*}
With the same argument as above, using (\ref{eq7}) and taking $\epsilon_{\lambda} =(1-m)/2C_{\lambda}$ and $\gamma_{\lambda}\geq C_{\lambda}/(1-m)^{2}$, we obtain
\begin{align*}
 II\geq m\lambda \sum \limits_{K\in\mathcal{T}_{h}}  \left( \|\mathrm{div} \boldsymbol{v_{h}} \|^{2}_{0,K}
 + \gamma_{\lambda}\|h_{e}^{-\frac{1}{2}}[\![\boldsymbol{v_{h}} \cdot \boldsymbol{n}]\!]\|^{2}_{0,\partial K\backslash \partial\Omega}\right).
\end{align*}
Since $m\in (0,1)$, it is clear that
\begin{align*}
III\geq m\sum_{e\in \mathcal{E}^{b}_{h}}\|\boldsymbol{v_{h}}\|^{2}_{0,e}.
\end{align*}
Together with the above three inequalities and (\ref{eq9}), the coerciveness of $a_{h}(\cdot,\cdot)$ is valid.
\end{proof}

\indent By using the Cauchy-Schwarz inequality, it is easy to show the following continuity property.

\begin{lemma}
The bilinear form $a_{h}(\boldsymbol{u},\boldsymbol{v})$ is continuous:
\begin{align}\label{eq10}
\mid a_{h}(\boldsymbol{u},\boldsymbol{v}) \mid \lesssim \|\boldsymbol{u}\|_{h}\|\boldsymbol{v}\|_{h},\quad \forall \boldsymbol{u},\boldsymbol{v}\in \boldsymbol{H}^{1+s}(\mathcal{T}_{h}),~s>\frac{1}{2}.
\end{align}
\end{lemma}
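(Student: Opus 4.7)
The plan is to bound $a_h(\boldsymbol u,\boldsymbol v)$ by a straightforward term-by-term application of the Cauchy--Schwarz inequality. Expanding the bilinear form produces nine contributions: two volume integrals weighted by $2\mu$ and $\lambda$, four ``consistency'' integrals over interior faces, two interior penalty integrals, and one boundary integral. For each I will pair one factor with a piece of $\|\boldsymbol u\|_h$ and the other with the corresponding piece of $\|\boldsymbol v\|_h$, splitting the Lam\'e weights symmetrically as $\sqrt{2\mu}\cdot\sqrt{2\mu}$ and $\sqrt{\lambda}\cdot\sqrt{\lambda}$ so that the implicit constant in (\ref{eq10}) depends only on $\gamma_\mu$ and $\gamma_\lambda$ and is independent of $\mu$, $\lambda$, and $h$.

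For the element-interior integrals and the boundary integral, Cauchy--Schwarz in $L^2(K)$ (resp.\ $L^2(e)$) followed by a discrete Cauchy--Schwarz over the sum of elements (resp.\ boundary faces) immediately produces products of pieces of $\|\boldsymbol u\|_{\mathrm{dG}}$ and $\|\boldsymbol v\|_{\mathrm{dG}}$. The same argument applies verbatim to the two interior penalty terms $2\mu\gamma_\mu h_e^{-1}\int_e[\![\boldsymbol u]\!]\cdot[\![\boldsymbol v]\!]\,ds$ and its $\lambda$-counterpart.

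The interesting terms are the four consistency integrals. For these I insert a factor $h_e^{1/2}\cdot h_e^{-1/2}=1$ before applying Cauchy--Schwarz on each face. For example,
$$2\mu\Bigl|\int_e\{\boldsymbol\varepsilon(\boldsymbol u)\boldsymbol n\}\cdot[\![\boldsymbol v]\!]\,ds\Bigr|\le \bigl(2\mu\,h_e\|\{\boldsymbol\varepsilon(\boldsymbol u)\boldsymbol n\}\|^2_{0,e}\bigr)^{1/2}\bigl(2\mu\,h_e^{-1}\|[\![\boldsymbol v]\!]\|^2_{0,e}\bigr)^{1/2},$$
and a further discrete Cauchy--Schwarz in the edge sum bounds the whole contribution by $\|\boldsymbol u\|_h\cdot\gamma_\mu^{-1/2}\|\boldsymbol v\|_h$, with the $\gamma_\mu$ absorbed into $C$. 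The divergence-consistency integrals are handled identically using the $\{\mathrm{div}\,\cdot\}$ and $[\![\,\cdot\,\cdot\boldsymbol n]\!]$ pieces of (\ref{eq5}). Symmetry in $\boldsymbol u$ and $\boldsymbol v$ is automatic because both jump/average terms appear in $\|\cdot\|_h$.

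The regularity hypothesis $s>\frac12$ is used precisely to guarantee that the face norms $\|\{\boldsymbol\varepsilon(\boldsymbol u)\boldsymbol n\}\|_{0,e}$ and $\|\{\mathrm{div}\,\boldsymbol u\}\|_{0,e}$ appearing in $\|\boldsymbol u\|_h$ are finite: for $\boldsymbol u|_K\in\boldsymbol H^{1+s}(K)$ with $s>\frac12$, both $\boldsymbol\varepsilon(\boldsymbol u)\boldsymbol n$ and $\mathrm{div}\,\boldsymbol u$ belong to $L^2(\partial K)$ by the standard $H^s(K)$ trace theorem. There is no real obstacle in the argument; the only care required is in keeping track of the $\mu$ and $\lambda$ weights on each side of the Cauchy--Schwarz splits so that the final constant is independent of the Lam\'e parameters, which is exactly the locking-free feature we need for subsequent estimates.
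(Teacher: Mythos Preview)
Your proposal is correct and is exactly the approach the paper indicates: the paper itself simply states ``By using the Cauchy--Schwarz inequality, it is easy to show the following continuity property'' without further detail, and your term-by-term Cauchy--Schwarz argument with the $h_e^{1/2}\cdot h_e^{-1/2}$ insertion on the consistency terms is precisely the intended computation. Your care in splitting the Lam\'e weights symmetrically so that the hidden constant depends only on $\gamma_\mu,\gamma_\lambda$ is appropriate and consistent with the locking-free spirit of the paper.
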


\section{A priori error estimate}\label{sec3}
From Babu\v{s}ka-Osborn spectral approximation theory \cite{Babuska1991} we know that the convergence and the error estimates of the finite element method for an eigenvalue problem can be derived from the convergence and the error estimates of the finite element method for the corresponding source problem. Hence, in this section we first extend the work of \cite{Hansbo2002} to get the error estimates for the source problem associated with the Steklov-Lam\'{e} eigenproblem.

The source problem associated with (\ref{eq1}) is as follows: find $\boldsymbol{w}\in \boldsymbol{H}^{1}(\Omega)$, such that
\begin{align}\label{eq11}
a(\boldsymbol{w,v})= b(\boldsymbol{f},\boldsymbol{v}), \quad \forall \boldsymbol{v} \in \boldsymbol{H}^{1}(\Omega).
\end{align}

Thanks to \cite{Brenner1992,Hansbo2002}, we have the following regularity lemma.

\begin{lemma}\label{lemma3.1}
Let $\boldsymbol{w}$ be the solution of (\ref{eq11}), then the following regularity estimates hold:\\

\noindent1. If $\boldsymbol{f}\in \boldsymbol{H}^{r-\frac{1}{2}}(\partial\Omega)$, then $\boldsymbol{w}\in \boldsymbol{H}^{r+1}(\Omega)$ and
\begin{align}\label{eq12}
\|\boldsymbol{w}\|_{r+1}+\lambda \|\mathrm{div}\boldsymbol{w} \|_{r}\leq C_{1}\|\boldsymbol{f}\|_{r-\frac{1}{2},\partial\Omega},
\end{align}
where $r=1$ when $\Omega$ is a convex polygonal and $r$ can be large enough when $\partial\Omega$ is smooth enough; \\
2. If $\boldsymbol{f}\in \boldsymbol{H}^{-\frac{1}{2}}(\partial\Omega)$, then $\boldsymbol{w}\in \boldsymbol{H}^{1}(\Omega)$ and
\begin{align}\label{eq13}
\|\boldsymbol{w}\|_{1}+\lambda \|\mathrm{div}\boldsymbol{w} \|_{0}\leq C_{2}\|\boldsymbol{f}\|_{-\frac{1}{2},\partial\Omega};
\end{align}
3. If $\boldsymbol{f}\in \boldsymbol{L}^{2}(\partial\Omega)$, then $\boldsymbol{w}\in \boldsymbol{H}^{1+\frac{1}{2}}(\Omega)$ and
\begin{align}\label{eq14}
 \|\boldsymbol{w}\|_{1+\frac{1}{2}}+\lambda \|\mathrm{div}\boldsymbol{w} \|_{\frac{1}{2}}\leq C_{3}\|\boldsymbol{f}\|_{0,\partial\Omega}.
\end{align}
\nonumber Here the constants $C_{i} (i=1,2,3)$ are independent of $\mu$ and $\lambda$.
\end{lemma}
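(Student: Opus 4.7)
The strategy is to first identify the boundary value problem hidden in the variational equation \eqref{eq11}, then establish the three cases in the order low/high/intermediate regularity, using interpolation to reach the third. Integration by parts in $a(\boldsymbol{w},\boldsymbol{v})$ shows that $\boldsymbol{w}$ satisfies $-\mathrm{div}\,\boldsymbol{\sigma}(\boldsymbol{w})=\boldsymbol{0}$ in $\Omega$ together with the Robin-type traction condition $\boldsymbol{\sigma}(\boldsymbol{w})\boldsymbol{n}+\boldsymbol{w}=\boldsymbol{f}$ on $\partial\Omega$. The boundary integral $\int_{\partial\Omega}\boldsymbol{u}\cdot\boldsymbol{v}\,ds$ appearing in $a(\cdot,\cdot)$ is exactly what kills the rigid-body kernel, so a Korn-type inequality of the form $\|\boldsymbol{v}\|_1^2\lesssim\|\boldsymbol{\varepsilon}(\boldsymbol{v})\|_0^2+\|\boldsymbol{v}\|_{0,\partial\Omega}^2$ makes $a(\cdot,\cdot)$ coercive on $\boldsymbol{H}^1(\Omega)$ with a constant that depends only on $\mu_1$ and not on $\lambda$.

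For part 2, I would apply Lax-Milgram to \eqref{eq11} interpreted with the duality pairing $\langle\boldsymbol{f},\boldsymbol{v}\rangle_{-1/2,1/2}$. Coercivity plus continuity of the trace map from $\boldsymbol{H}^1(\Omega)$ to $\boldsymbol{H}^{1/2}(\partial\Omega)$ yields $\|\boldsymbol{w}\|_1\lesssim\|\boldsymbol{f}\|_{-1/2,\partial\Omega}$ with a $\lambda$-free constant. To produce the weighted term $\lambda\|\mathrm{div}\,\boldsymbol{w}\|_0$, I would test with $\boldsymbol{v}=\boldsymbol{w}$, getting
\begin{equation*}
2\mu\|\boldsymbol{\varepsilon}(\boldsymbol{w})\|_0^2+\lambda\|\mathrm{div}\,\boldsymbol{w}\|_0^2+\|\boldsymbol{w}\|_{0,\partial\Omega}^2=\langle\boldsymbol{f},\boldsymbol{w}\rangle_{-1/2,1/2}\lesssim\|\boldsymbol{f}\|_{-1/2,\partial\Omega}\|\boldsymbol{w}\|_1,
\end{equation*}
and combine with the $H^1$ bound to extract $\lambda\|\mathrm{div}\,\boldsymbol{w}\|_0^2\lesssim\|\boldsymbol{f}\|_{-1/2,\partial\Omega}^2$; the square-root of this estimate delivers \eqref{eq13}.

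Part 1 is the heart of the proof. I would follow the Brenner-Sung style decomposition that appears in \cite{Brenner1992,Hansbo2002}: introduce the auxiliary pressure $q:=\lambda\,\mathrm{div}\,\boldsymbol{w}$, rewrite the PDE as a (generalized) Stokes system $-2\mu\,\mathrm{div}\,\boldsymbol{\varepsilon}(\boldsymbol{w})-\nabla q=\boldsymbol{0}$, $\mathrm{div}\,\boldsymbol{w}-\lambda^{-1}q=0$ with the Robin-type boundary condition translated accordingly, and apply the shift theorem for Stokes on a convex polygon (respectively smooth) domain. This produces bounds $\|\boldsymbol{w}\|_{r+1}+\|q\|_r$ controlled by $\|\boldsymbol{f}\|_{r-1/2,\partial\Omega}$ with a constant that is uniform as $\lambda\to\infty$ because the Stokes regularity constants do not see $\lambda$, and $\|q\|_r=\lambda\|\mathrm{div}\,\boldsymbol{w}\|_r$ is exactly the weighted quantity in \eqref{eq12}. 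Part 3 then follows by real interpolation between parts 1 (with $r=1$) and 2: $\boldsymbol{L}^2(\partial\Omega)=[\boldsymbol{H}^{-1/2}(\partial\Omega),\boldsymbol{H}^{1/2}(\partial\Omega)]_{1/2}$ interpolates to $\boldsymbol{H}^{3/2}(\Omega)=[\boldsymbol{H}^1(\Omega),\boldsymbol{H}^2(\Omega)]_{1/2}$, and the same interpolation applied to the divergence-weighted seminorm yields \eqref{eq14}.

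The main obstacle is tracking the $\lambda$-independence in part 1; a direct application of standard elliptic shift theorems to the Lamé operator produces constants of order $\lambda$, so the mixed/Stokes reformulation is essential and the compatibility of the Robin traction $\boldsymbol{\sigma}(\boldsymbol{w})\boldsymbol{n}+\boldsymbol{w}=\boldsymbol{f}$ with Stokes-type regularity on corners of a convex polygon must be invoked carefully (this is precisely where the convexity, or alternatively the smoothness, hypothesis on $\Omega$ enters). Once part 1 is in hand with uniform constants, parts 2 and 3 are routine.
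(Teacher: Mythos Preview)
Your overall architecture---cite Brenner--Sung/Hansbo for part~1, do energy estimates for part~2, interpolate for part~3---matches the paper. But there is a genuine gap in your argument for part~2, and it is precisely the place where $\lambda$-independence can be lost.

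Testing \eqref{eq11} with $\boldsymbol{v}=\boldsymbol{w}$ gives you
\[
\lambda\|\mathrm{div}\,\boldsymbol{w}\|_0^2\le 2\mu\|\boldsymbol{\varepsilon}(\boldsymbol{w})\|_0^2+\lambda\|\mathrm{div}\,\boldsymbol{w}\|_0^2+\|\boldsymbol{w}\|_{0,\partial\Omega}^2\lesssim\|\boldsymbol{f}\|_{-1/2,\partial\Omega}^2,
\]
whose square root is $\sqrt{\lambda}\,\|\mathrm{div}\,\boldsymbol{w}\|_0\lesssim\|\boldsymbol{f}\|_{-1/2,\partial\Omega}$, \emph{not} $\lambda\|\mathrm{div}\,\boldsymbol{w}\|_0\lesssim\|\boldsymbol{f}\|_{-1/2,\partial\Omega}$. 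Since $\lambda$ is allowed to be arbitrarily large, this is strictly weaker than \eqref{eq13}, and the deficit propagates through interpolation to part~3.

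The paper closes this gap with a different test function. Using a right inverse of the divergence (Lemma~2.1 in \cite{Brenner1992}), one picks $\boldsymbol{w}^*\in\boldsymbol{H}^1(\Omega)$ with $\mathrm{div}\,\boldsymbol{w}^*=\mathrm{div}\,\boldsymbol{w}$ and $\|\boldsymbol{w}^*\|_1\lesssim\|\mathrm{div}\,\boldsymbol{w}\|_0$. Testing \eqref{eq11} with $\boldsymbol{v}=\boldsymbol{w}^*$ then isolates the $\lambda$ term \emph{linearly} in $\|\mathrm{div}\,\boldsymbol{w}\|_0$:
\[
\lambda\|\mathrm{div}\,\boldsymbol{w}\|_0^2\lesssim\bigl(\|\boldsymbol{f}\|_{-1/2,\partial\Omega}+\|\boldsymbol{w}\|_1\bigr)\|\boldsymbol{w}^*\|_1\lesssim\|\boldsymbol{f}\|_{-1/2,\partial\Omega}\,\|\mathrm{div}\,\boldsymbol{w}\|_0,
\]
from which the full factor $\lambda$ survives after division. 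Your Stokes reformulation for part~1 implicitly contains this idea (the pressure $q=\lambda\,\mathrm{div}\,\boldsymbol{w}$ is controlled via an inf--sup, which is exactly a divergence right inverse), but you did not carry the same mechanism down to the $\boldsymbol{H}^1$ level. Once this is repaired, your interpolation argument for part~3 agrees with the paper's (the paper phrases it as operator interpolation via Proposition~14.1.5 of \cite{Brenner2007}, applied to a norm $\|\cdot\|_{\widetilde{\boldsymbol{H}}^{1+r}}:=\|\cdot\|_{1+r}+\lambda\|\mathrm{div}\,\cdot\|_r$ so that the weighted divergence term interpolates along with the Sobolev scale).
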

\begin{proof} Referring to (2.30) in \cite{Brenner1992} and (28) in \cite{Hansbo2002}, for integer $r\geq 1$ we can get (\ref{eq12}).

From (\ref{eq11}) and the definition of $a(\cdot,\cdot)$ we have
\begin{align}\label{eq15}
2\mu\int_{\Omega}\boldsymbol{\varepsilon}(\boldsymbol{w}):\boldsymbol{\varepsilon}(\boldsymbol{v})dx+\lambda \int_{\Omega}(\mathrm{div}\boldsymbol{w})(\mathrm{div}\boldsymbol{v})dx
         +\int_{\partial\Omega} \boldsymbol{w}\cdot\boldsymbol{v}ds=\int_{\partial\Omega} \boldsymbol{f}\cdot\boldsymbol{v}ds.
\end{align}
Let $\boldsymbol{v}=\boldsymbol{w}$, then
\begin{align*}
2\mu\int_{\Omega}\boldsymbol{\varepsilon}(\boldsymbol{w}):\boldsymbol{\varepsilon}(\boldsymbol{w})dx
+\int_{\partial\Omega} \boldsymbol{w}\cdot\boldsymbol{w}ds
\lesssim \|\boldsymbol{f}\|_{-\frac{1}{2},\partial\Omega}\|\boldsymbol{w}\|_{\frac{1}{2},\partial\Omega}.
\end{align*}
Thus, from Theorem 7 in \cite{Dom¨ªnguez2021}, we have
\begin{align}\label{eq16}
\|\boldsymbol{w}\|_{1}\lesssim \|\boldsymbol{f}\|_{-\frac{1}{2},\partial\Omega}.
\end{align}
By Lemma 2.1 in \cite{Brenner1992}, there exists $\boldsymbol{w}^{*}\in \widehat{\boldsymbol{H}}^{1}(\Omega)
=\{\boldsymbol{v}\in \boldsymbol{H}^{1}(\Omega): \int_{\Omega}\boldsymbol{v}dx=0,~\int_{\Omega}\mathrm{rot} \boldsymbol{v}dx=0\}$
such that
\begin{align}\label{eq17}
\mathrm{div} \boldsymbol{w}^{*}=\mathrm{div} \boldsymbol{w}
\end{align}
and
\begin{align}\label{eq18}
\|\boldsymbol{w}^{*}\|_{1} \lesssim \|\mathrm{div} \boldsymbol{w}\|_{0}.
\end{align}
Choosing $\boldsymbol{v}=\boldsymbol{w}^{*}$ in (\ref{eq15}), and from (\ref{eq17}), (\ref{eq16}) and (\ref{eq18}) we deduce
\begin{align}\label{eq19}
\nonumber&\lambda\|\mathrm{div} \boldsymbol{w}\|_{0}^{2}
\lesssim\|\boldsymbol{f}\|_{-\frac{1}{2},\partial\Omega}\|\boldsymbol{w}^{*}\|_{1}
  +2\mu\|\boldsymbol{\varepsilon}(\boldsymbol{w})\|_{0}\|\boldsymbol{\varepsilon}(\boldsymbol{w}^{*})\|_{0}
  +\|\boldsymbol{w}\|_{0,\partial\Omega}\|\boldsymbol{w}^{*}\|_{0,\partial\Omega}  \\
&\lesssim \|\boldsymbol{f}\|_{-\frac{1}{2},\partial\Omega}\|\mathrm{div} \boldsymbol{w}\|_{0}+2\mu\|\boldsymbol{f}\|_{-\frac{1}{2},\partial\Omega}\|\mathrm{div} \boldsymbol{w}\|_{0},
\end{align}
hence, we get
\begin{align}\label{eq20}
\lambda\|\mathrm{div} \boldsymbol{w}\|_{0}\lesssim \|\boldsymbol{f}\|_{-\frac{1}{2},\partial\Omega},
\end{align}
which together with (\ref{eq16}) and (\ref{eq20}) yields (\ref{eq13}).\\

\indent From the conclusions in the first two cases, we can define the solution operator
  $A:\boldsymbol{H}^{-\frac{1}{2}}(\partial\Omega)\to \boldsymbol{H}^{1}(\Omega)$ by
\begin{align}\label{eq21}
a(A\boldsymbol{f},\boldsymbol{v})=b(\boldsymbol{f},\boldsymbol{v}),\quad \forall \boldsymbol{v}\in \boldsymbol{H}^{1}(\Omega).
\end{align}
Define the norm
$$\|\boldsymbol{w}\|_{\widetilde{\boldsymbol{H}}^{1+r}(\Omega)}=\|\boldsymbol{w}\|_{1+r}+\lambda \|\mathrm{div}\boldsymbol{w} \|_{r},\quad\forall \boldsymbol{w}\in \boldsymbol{H}^{1+r}(\Omega)~(0\leq r\leq 1).$$
It is obvious that the norm $\|\cdot\|_{\widetilde{\boldsymbol{H}}^{1+r}(\Omega)}$ is equivalent to the norm $\|\cdot\|_{1+r}$. \\
Since $\boldsymbol{H}^{\frac{1}{2}}(\partial \Omega)\hookrightarrow \boldsymbol{H}^{-\frac{1}{2}}(\partial \Omega) $ and $\boldsymbol{H}^{2}(\Omega)\hookrightarrow \boldsymbol{H}^{1}(\Omega)$,  the operator $A$ can also be defined as
$A: \boldsymbol{H}^{\frac{1}{2}}(\partial \Omega)\rightarrow \boldsymbol{H}^{2}(\Omega) $.
Note that $A\boldsymbol{f}=\boldsymbol{w}$, then (\ref{eq12}) and (\ref{eq13}) can be rewritten as
\begin{align*}
 &\|A\boldsymbol{f}\|_{\widetilde{\boldsymbol{H}}^{2}(\Omega)}\leq C_{1}\|\boldsymbol{f}\|_{\frac{1}{2},\partial \Omega},\\
 &\|A\boldsymbol{f}\|_{\widetilde{\boldsymbol{H}}^{1}(\Omega)}\leq C_{2}\|\boldsymbol{f}\|_{-\frac{1}{2},\partial \Omega}.
\end{align*}
Then, from Proposition 14.1.5 in \cite{Brenner2007} we have
\begin{align*}
\|A\|_{\boldsymbol{L}^{2}(\partial \Omega)\rightarrow \boldsymbol{H}^{1+\frac{1}{2}}(\Omega) }
\leq \|A\|_{\boldsymbol{H}^{-\frac{1}{2}}(\partial \Omega)\rightarrow \boldsymbol{H}^{1}(\Omega)}^{\frac{1}{2}}
     \|A\|_{\boldsymbol{H}^{\frac{1}{2}}(\partial \Omega)\rightarrow \boldsymbol{H}^{2}(\Omega) }^{\frac{1}{2}}\leq \sqrt{C_{2}C_{1}},
\end{align*}
i.e., $A: \boldsymbol{L}^{2}(\partial \Omega)\rightarrow \boldsymbol{H}^{1+\frac{1}{2}}(\Omega) $ is bounded, hence,
\begin{align*}
\|A\boldsymbol{f}\|_{\widetilde{\boldsymbol{H}}^{1+\frac{1}{2}}(\Omega) }&\leq\|A\|_{\boldsymbol{L}^{2}(\partial \Omega)\rightarrow \boldsymbol{H}^{1+\frac{1}{2}}(\Omega)}\|\boldsymbol{f}\|_{0,\partial\Omega}\leq C_{3}\|\boldsymbol{f}\|_{0,\partial\Omega},
\end{align*}
where $C_{3}=\sqrt{C_{1}C_{2}}$ is independent of $\mu$ and $\lambda$.  From the definition of $\|\cdot\|_{\widetilde{\boldsymbol{H}}^{1+r}(\Omega)}$ we obtain (\ref{eq14}).
\end{proof}

\begin{remark}
\indent For any given $\boldsymbol{f}\in \boldsymbol{L}^{2}(\partial\Omega)$, from (\ref{eq14}) we easily have $\boldsymbol{w}\in \boldsymbol{H}^{1+r}(\Omega)$, $r<\frac{1}{2}$ and $r$ can be close to $\frac{1}{2}$ arbitrarily, and
\begin{align}\label{eq22}
 \|\boldsymbol{w}\|_{1+r}+\lambda \|\mathrm{div}\boldsymbol{w} \|_{r}\leq C_{R}\|\boldsymbol{f}\|_{0,\partial\Omega}.
\end{align}
\end{remark}

The DG approximation of (\ref{eq11}) is to find $\boldsymbol{w_{h}}\in \boldsymbol{S}^{h}$, such that
\begin{align}\label{eq23}
a_{h}(\boldsymbol{w_{h},v_{h}})= b_{h}(\boldsymbol{f},\boldsymbol{v_{h}}) ,\quad  \forall \boldsymbol{v_{h}} \in \boldsymbol{S}^{h}.
\end{align}
We have the following consistency property.
\begin{lemma}
Let $\boldsymbol{w}$ and $\boldsymbol{w}_{h}$ be the solution of (\ref{eq11}) and (\ref{eq23}), respectively,
 then the DG approximation (\ref{eq23}) is consistent:
\begin{align}\label{eq24}
a_{h}(\boldsymbol{w}-\boldsymbol{w_{h}},\boldsymbol{v_{h}})=0,\quad \forall \boldsymbol{v_{h}}\in \boldsymbol{S}^{h}.
\end{align}
\end{lemma}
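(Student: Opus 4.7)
The plan is the standard Galerkin-orthogonality argument: I would show that the exact solution $\boldsymbol{w}$ of (\ref{eq11}) also satisfies $a_h(\boldsymbol{w},\boldsymbol{v_h}) = b_h(\boldsymbol{f},\boldsymbol{v_h})$ for every $\boldsymbol{v_h}\in \boldsymbol{S}^h$, and then (\ref{eq24}) follows by subtracting (\ref{eq23}). The enabling regularity is supplied by Lemma \ref{lemma3.1}: since $\boldsymbol{w}\in \boldsymbol{H}^{1+1/2}(\Omega)$, it is globally continuous and the traces of $\boldsymbol{\varepsilon}(\boldsymbol{w})\boldsymbol{n}$ and $\mathrm{div}\,\boldsymbol{w}$ are well-defined and single-valued across every interior face.

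First I would dispose of the terms in $a_h(\boldsymbol{w},\boldsymbol{v_h})$ that vanish by continuity: because $[\![\boldsymbol{w}]\!]=\boldsymbol{0}$ and $[\![\boldsymbol{w}\cdot\boldsymbol{n}]\!]=0$ on every $e\in\mathcal{E}_h^i$, both the Nitsche-symmetric pieces of the form $\int_e\{\boldsymbol{\varepsilon}(\boldsymbol{v_h})\boldsymbol{n}\}\cdot[\![\boldsymbol{w}]\!]\,ds$ and $\int_e\{\mathrm{div}\,\boldsymbol{v_h}\}[\![\boldsymbol{w}\cdot\boldsymbol{n}]\!]\,ds$, as well as the two interior penalty contributions, drop out immediately.

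Next I would apply Green's formula element by element to the two remaining volume terms, obtaining on each $K$ a volume part $-\int_K \mathrm{div}(\boldsymbol{\sigma}(\boldsymbol{w}))\cdot\boldsymbol{v_h}\,dx$ together with a boundary part $\int_{\partial K}(\boldsymbol{\sigma}(\boldsymbol{w})\boldsymbol{n}_K)\cdot\boldsymbol{v_h}\,ds$. The volume parts sum to zero, because testing (\ref{eq11}) against smooth compactly supported functions yields the strong equation $-\mathrm{div}\,\boldsymbol{\sigma}(\boldsymbol{w})=\boldsymbol{0}$ in $\Omega$. The boundary parts split into interior- and boundary-face contributions. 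On interior faces, the single-valuedness of the normal flux $\boldsymbol{\sigma}(\boldsymbol{w})\boldsymbol{n}$ rewrites $\sum_{K}\int_{\partial K\cap \mathcal{E}_h^i}$ as $\sum_{e\in\mathcal{E}_h^i}\int_e\{\boldsymbol{\sigma}(\boldsymbol{w})\boldsymbol{n}\}\cdot[\![\boldsymbol{v_h}]\!]\,ds$, which exactly cancels the remaining Nitsche average-flux term $-\sum_{e\in\mathcal{E}_h^i}\int_e\{\boldsymbol{\sigma}(\boldsymbol{w})\boldsymbol{n}\}\cdot[\![\boldsymbol{v_h}]\!]\,ds$ still present in $a_h(\boldsymbol{w},\boldsymbol{v_h})$.

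Finally, on $\partial\Omega$ the leftover flux $\int_{\partial\Omega}(\boldsymbol{\sigma}(\boldsymbol{w})\boldsymbol{n})\cdot\boldsymbol{v_h}\,ds$ from the integration by parts combines with the boundary integral $\sum_{e\in\mathcal{E}_h^b}\int_e\boldsymbol{w}\cdot\boldsymbol{v_h}\,ds$ built into $a_h$ to give $\int_{\partial\Omega}(\boldsymbol{\sigma}(\boldsymbol{w})\boldsymbol{n}+\boldsymbol{w})\cdot\boldsymbol{v_h}\,ds$, which by the natural boundary condition $\boldsymbol{\sigma}(\boldsymbol{w})\boldsymbol{n}+\boldsymbol{w}=\boldsymbol{f}$ (also a consequence of (\ref{eq11})) equals $b_h(\boldsymbol{f},\boldsymbol{v_h})$. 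The only real technical point is justifying the element-wise Green identities with merely $\boldsymbol{H}^{1+1/2}$ regularity, but this is not a genuine obstacle: the stated regularity gives $\boldsymbol{\sigma}(\boldsymbol{w})\boldsymbol{n}\in L^2(e)$ on each face, so every integral in the derivation is classically well-defined, and a standard density argument could be invoked if one wishes to be fully rigorous.
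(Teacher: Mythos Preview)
Your proposal is correct and follows essentially the same route as the paper: element-wise Green's formula applied to the volume terms, the vanishing of all jump and penalty terms involving $[\![\boldsymbol{w}]\!]$ by continuity of $\boldsymbol{w}$, the single-valuedness of $\boldsymbol{\sigma}(\boldsymbol{w})\boldsymbol{n}$ across interior faces to match the surviving average-flux term, and the natural boundary condition $\boldsymbol{\sigma}(\boldsymbol{w})\boldsymbol{n}=\boldsymbol{f}-\boldsymbol{w}$ on $\partial\Omega$. The paper simply organises the computation in the opposite direction, starting from $0=\sum_K\int_K(-\mathrm{div}\,\boldsymbol{\sigma}(\boldsymbol{w}))\cdot\boldsymbol{v}\,dx$ and building up to $a_h(\boldsymbol{w},\boldsymbol{v})=b_h(\boldsymbol{f},\boldsymbol{v})$ for all $\boldsymbol{v}\in\boldsymbol{H}^{1+r}(\mathcal{T}_h)$, which is the same identity you obtain.
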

\begin{proof} Applying Green's formula elementwise in $\mathcal{T}_{h}$, and using the fact that
$ [\![\phi\varphi]\!]=\{\phi\}[\![\varphi]\!]+[\![\phi]\!]\{\varphi\}$ and $\int_{e}[\![\boldsymbol{\sigma}(\boldsymbol{w})\boldsymbol{n}]\!]\cdot \boldsymbol{v}=0$ on inner face $e$, we deduce
\begin{align*}
&0=\sum_{K\in T_{h}}\int_{K}(-\mathrm{div} \boldsymbol{\sigma}(\boldsymbol{w}))\boldsymbol{v}dx
 =\sum_{K\in T_{h}}\int_{K}\boldsymbol{\sigma}(\boldsymbol{w}):\boldsymbol{\varepsilon}(\boldsymbol{v})dx
    -\sum_{e\in \mathcal{E}_{h} }\int_{e}(\boldsymbol{\sigma}(\boldsymbol{w})\boldsymbol{n}) \cdot\boldsymbol{v}ds\\
 &=\sum_{K\in T_{h}}\int_{K}\boldsymbol{\sigma}(\boldsymbol{w}):\boldsymbol{\varepsilon}(\boldsymbol{v})dx
  -\sum_{e\in \mathcal{E}^{i}_{h} }\int_{e}
  [\![(\boldsymbol{\sigma}(\boldsymbol{w})\boldsymbol{n})\cdot \boldsymbol{v}]\!]ds
 -\sum_{e\in \mathcal{E}^{b}_{h} }\int_{e}(\boldsymbol{\sigma}(\boldsymbol{w})\boldsymbol{n})\cdot \boldsymbol{v}ds\\
 &=\sum_{K\in T_{h}}\int_{K}\boldsymbol{\sigma}(\boldsymbol{w}):\boldsymbol{\varepsilon}(\boldsymbol{v})dx-\sum_{e\in \mathcal{E}^{i}_{h} }\int_{e}\left( \{\boldsymbol{\sigma}(\boldsymbol{w})\boldsymbol{n}\}\cdot[\![\boldsymbol{v}]\!]
     +[\![\boldsymbol{\sigma}(\boldsymbol{w})\boldsymbol{n}]\!]\cdot\{\boldsymbol{v}\}\right)ds
 -\sum_{e\in \mathcal{E}^{b}_{h} }\int_{e}( \boldsymbol{f}- \boldsymbol{w})\cdot\boldsymbol{v}ds\\
 &=\sum_{K\in T_{h}}\int_{K}\boldsymbol{\sigma}(\boldsymbol{w}):\boldsymbol{\varepsilon}(\boldsymbol{v})dx-\sum_{e\in \mathcal{E}^{i}_{h} }\int_{e}\{\boldsymbol{\sigma}(\boldsymbol{w})\boldsymbol{n}\}\cdot[\![\boldsymbol{v}]\!]ds
  -\sum_{e\in \mathcal{E}^{b}_{h} }\int_{e}( \boldsymbol{f}- \boldsymbol{w})\cdot\boldsymbol{v}ds
\end{align*}
\begin{align*}
  &=2\mu\left( \sum\limits_{K\in\mathcal{T}_{h}}\int_{K}\boldsymbol{\varepsilon}(\boldsymbol{w}):\boldsymbol{\varepsilon}(\boldsymbol{v})dx
-\sum_{e\in \mathcal{E}^{i}_{h}}\int_{e}\{\boldsymbol{\varepsilon}(\boldsymbol{w})\boldsymbol{n}\}\cdot[\![\boldsymbol{v}]\!]ds \right.\\
\nonumber&\quad\left.
-\sum_{e\in \mathcal{E}^{i}_{h}}\int_{e}\{\boldsymbol{\varepsilon}(\boldsymbol{v})\boldsymbol{n}\}\cdot[\![\boldsymbol{w}]\!]ds
+\sum_{e\in \mathcal{E}^{i}_{h}}\frac{\gamma_{\mu} }{h_{e}} \int_{e}[\![\boldsymbol{w}]\!]\cdot[\![\boldsymbol{v}]\!]ds
\right)\\
\nonumber&\quad+\lambda\left(
\sum \limits_{K\in\mathcal{T}_{h}}\int_{K}(\mathrm{div}\boldsymbol{w})(\mathrm{div}\boldsymbol{v})dx
-\sum_{e\in \mathcal{E}^{i}_{h}}\int_{e}\{\mathrm{div}\boldsymbol{w}\}[\![\boldsymbol{v}\cdot\boldsymbol{n}]\!]ds\right.\\
\nonumber&\quad\left.
-\sum_{e\in \mathcal{E}^{i}_{h}}\int_{e}\{\mathrm{div}\boldsymbol{v}\} [\![\boldsymbol{w}\cdot\boldsymbol{n}]\!]ds
+ \sum_{e\in \mathcal{E}^{i}_{h}}\frac{\gamma_{\lambda}{\color{red} }}{h_{e}} \int_{e}[\![\boldsymbol{w}\cdot\boldsymbol{n}]\!][\![\boldsymbol{v}\cdot\boldsymbol{n}]\!]ds\right)-\sum_{e\in \mathcal{E}^{b}_{h}}\int_{e}(\boldsymbol{f}-\boldsymbol{w})\cdot\boldsymbol{v}ds,
\end{align*}
which implies that
\begin{align}\label{eq25}
a_{h}(\boldsymbol{w},\boldsymbol{v})=b_{h}(\boldsymbol{f},\boldsymbol{v}),\quad  \forall \boldsymbol{v}\in \boldsymbol{H}^{1+r}(\mathcal{T}_{h}).
\end{align}
 It is obvious that $\boldsymbol{S}^{h}\subset \boldsymbol{H}^{1+r}(\mathcal{T}_{h})$, then subtracting (\ref{eq23}) from (\ref{eq25}) we  get (\ref{eq24}).
\end{proof}

\indent In order to analyze the error estimates of DG finite element solutions, we need the following trace inequality.
\begin{lemma}\label{lemma3.4}
Suppose that
$\boldsymbol{w}\in \boldsymbol{H}^{1+\xi}(K) (0<\xi<\frac{1}{2})$ and $\mathrm{div}\boldsymbol{\sigma(w)}\in \boldsymbol{L}^{2}(K)$,
then
\begin{align}\label{eq26}
 \|\boldsymbol{\sigma(w) n}\|_{\xi-\frac{1}{2},e}
  \lesssim
  h_{K}^{\xi-\frac{1}{2}}
       \left(h_{K}^{1-\xi}\|\mathrm{div}\boldsymbol{\sigma(w)}\|_{0,K}+\|\boldsymbol{\sigma(w)}\|_{\xi,K}\right),
\quad \forall K\in\mathcal{T}_{h},~ e\subset\partial K.
\end{align}
\end{lemma}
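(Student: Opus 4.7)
The plan is the standard scaling plus reference-element argument: transfer to a unit simplex, prove an $h_K$-independent trace estimate there by duality, then rescale.

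First I would fix $K\in\mathcal T_h$ and $e\subset\partial K$, and let $F_K:\hat K\to K$ denote the affine isomorphism from a reference simplex $\hat K$ of unit diameter, so that the gradient of $F_K$ is of order $h_K$ and $\hat e:=F_K^{-1}(e)$. Setting $\hat{\boldsymbol\sigma}(\hat x):=\boldsymbol\sigma(\boldsymbol w)(F_K(\hat x))$, the chain rule and change of variables give the elementary scaling identities
\begin{align*}
\|\hat{\boldsymbol\sigma}\|_{0,\hat K}&\sim h_K^{-d/2}\|\boldsymbol\sigma(\boldsymbol w)\|_{0,K},\qquad
|\hat{\boldsymbol\sigma}|_{\xi,\hat K}\sim h_K^{\xi-d/2}|\boldsymbol\sigma(\boldsymbol w)|_{\xi,K},\\
\|\mathrm{div}\hat{\boldsymbol\sigma}\|_{0,\hat K}&\sim h_K^{1-d/2}\|\mathrm{div}\boldsymbol\sigma(\boldsymbol w)\|_{0,K},
\end{align*}
together with the corresponding identity for the negative-order face norm $\|\cdot\|_{\xi-1/2,e}$ obtained from its duality characterization against $\boldsymbol H^{1/2-\xi}(e)$. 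After factoring out the appropriate powers of $h_K$, the target reduces to the $h_K$-independent reference estimate
$$\|\hat{\boldsymbol\sigma}\hat{\boldsymbol n}\|_{\xi-1/2,\hat e}\lesssim \|\hat{\boldsymbol\sigma}\|_{\xi,\hat K}+\|\mathrm{div}\hat{\boldsymbol\sigma}\|_{0,\hat K}.$$

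To prove the reference estimate I would argue by duality. For an arbitrary test field $\hat{\boldsymbol\phi}\in\boldsymbol H^{1/2-\xi}(\hat e)$, construct a lifting $\hat{\boldsymbol\Phi}\in\boldsymbol H^{1-\xi}(\hat K)$ with $\hat{\boldsymbol\Phi}|_{\hat e}=\hat{\boldsymbol\phi}$, $\hat{\boldsymbol\Phi}\equiv\boldsymbol 0$ on $\partial\hat K\setminus\hat e$, and $\|\hat{\boldsymbol\Phi}\|_{1-\xi,\hat K}\lesssim \|\hat{\boldsymbol\phi}\|_{1/2-\xi,\hat e}$ (obtained by multiplying a standard trace right-inverse by a smooth cut-off supported in a neighborhood of $\hat e$ that avoids the other faces of $\hat K$). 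Green's formula then gives
$$\langle \hat{\boldsymbol\sigma}\hat{\boldsymbol n},\hat{\boldsymbol\phi}\rangle_{\hat e}=\int_{\hat K}(\mathrm{div}\hat{\boldsymbol\sigma})\cdot\hat{\boldsymbol\Phi}\,d\hat x+\int_{\hat K}\hat{\boldsymbol\sigma}:\nabla\hat{\boldsymbol\Phi}\,d\hat x.$$
The first integral is controlled by Cauchy--Schwarz and the lifting estimate. For the second I use the fact that the assumption $0<\xi<1/2$ gives $\boldsymbol H^\xi(\hat K)=\boldsymbol H^\xi_0(\hat K)$, so the duality pairing $\boldsymbol H^\xi\times\boldsymbol H^{-\xi}$ is bounded; combined with $\|\nabla\hat{\boldsymbol\Phi}\|_{-\xi,\hat K}\lesssim \|\hat{\boldsymbol\Phi}\|_{1-\xi,\hat K}$ and the lifting bound, this yields the reference estimate after taking the supremum over $\hat{\boldsymbol\phi}$.

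The main obstacle is the bookkeeping of $h_K$-exponents in the scaling step, because both $\|\cdot\|_{\xi,K}$ and the test-space norm $\|\cdot\|_{1/2-\xi,e}$ naturally decompose into an $L^2$ part and a Slobodeckij seminorm that scale with different powers of $h_K$. To recover cleanly the exponents $h_K^{\xi-1/2}$ and $h_K^{1/2}$ appearing in the claim, I would work throughout with the scale-invariant equivalent norms (multiplying the $L^2$ components by suitable powers of $h_K$) so that every piece picks up a uniform scaling factor; substituting the reference inequality back then produces exactly the form stated in the lemma.
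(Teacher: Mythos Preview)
Your proposal is correct and uses the same core ingredients as the paper: Green's formula, the identification $\boldsymbol H^\xi(K)=\boldsymbol H^\xi_0(K)$ for $0<\xi<\tfrac12$ to interpret $\int_K\boldsymbol\sigma:\nabla\boldsymbol v$ as a duality pairing, and a lifting of the test function from the face $e$ into $K$ that vanishes on the remaining faces. The difference is organizational rather than mathematical. The paper works directly on the physical element $K$ and invokes an existing lifting lemma (their (3.18), taken from \cite{Bi2021}) that already carries the $h_K$-dependence, namely $\|\nabla\boldsymbol v_{\boldsymbol g}\|_{-\xi,K}+h_K^{\xi-1}\|\boldsymbol v_{\boldsymbol g}\|_{0,K}\lesssim h_K^{\xi-1/2}\|\boldsymbol g\|_{1/2-\xi,e}$; the claimed inequality then follows in two lines from Green's formula. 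You instead transfer to a reference simplex, prove a scale-free inequality there, and recover the $h_K$-powers by rescaling with suitably weighted norms. Your route is more self-contained but, as you note, requires care with the mixed scaling of the $L^2$ and Slobodeckij components; the paper's route hides that bookkeeping inside the cited lifting lemma (which is itself typically proved by your scaling argument). Either way the argument is sound.
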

\begin{proof}
Inequality (\ref{eq26}) is contained in the proof of Corollary 3.3 on page 1384 of \cite{bernardi} or Lemma 2.1 in \cite{Cai2011}, or Lemma 2.3 in \cite{Cai2017}, or Lemma 3.3 in \cite{Bi2021}. The brief proof is presented as follows for easy reading.

For all $\boldsymbol{v} \in \boldsymbol{H}^{1-\xi}(K)$ with $0<\xi<\frac{1}{2}$, we shall prove that the following Green's formula
\begin{align}\label{eq27}
\int_{\partial K}\boldsymbol{\sigma(w) n \cdot v} ds=\int_{K} \mathrm{div}\boldsymbol{\sigma(w) v}dx +\int_{K}\boldsymbol{\sigma(w)}:\nabla \boldsymbol{ v}dx.
\end{align}
 The validity of (\ref{eq27}) follows from the standard density argument and the fact that (\ref{eq27}) holds for $C^{\infty}(\bar{K})$ functions.
This formula holds for all $\boldsymbol{w} \in \boldsymbol{H}^{1+\xi}(K)$ with $\mathrm{div}\boldsymbol{\sigma}(\boldsymbol{ w}) \in L^{2}(K)$ and for all $\boldsymbol{v} \in \boldsymbol{H}^{1-\xi}(K)$ with $0<\xi<\frac{1}{2}$. Let $\boldsymbol{H}^{-\xi}(K)$ be the dual of $\boldsymbol{H}_{0}^{\xi}(K)$ which is the closure of $C_{0}^{\infty}(K)$ in the $\boldsymbol{H}^{\xi}(K)$ norm. Since $\boldsymbol{H}^{\xi}(K)$ is the same space as $\boldsymbol{H}_{0}^{\xi}(K)$ for $\xi \in(0,\frac{1}{2})$ (see, e.g., Theorem 1.4.2.4 in \cite{Grisvard1985}), then $\nabla \boldsymbol{ v}$ is in $\boldsymbol{H}^{-\xi}(K)$. That is, $\boldsymbol{\sigma}(\boldsymbol{w})$ and $\nabla \boldsymbol{v}$ can be viewed as a duality pairing between $\boldsymbol{H}^{\xi}(K)$ and $\boldsymbol{H}^{-\xi}(K)$. Hence, (\ref{eq27}) is valid.

For any $\boldsymbol{g} \in \boldsymbol{H}^{\frac{1}{2}-\xi}(e)$,
from 
Lemma $3.2$ in \cite{Bi2021} we know that there exists a lifting $\boldsymbol{v}_{\boldsymbol{g}}$ of $\boldsymbol{g}$ such that $\boldsymbol{v}_{\boldsymbol{g}} \in \boldsymbol{H}^{1-\xi}(K),\boldsymbol{v}_{\boldsymbol{g}} \mid _{e}=\boldsymbol{g},\mathbf{v}_{\boldsymbol{g}} \mid _{\partial K\backslash e}=0$, and
\begin{align}\label{eq28}
\|\nabla\boldsymbol{v}_{\boldsymbol{g}}\|_{-\xi,K}+h_{K}^{\xi-1}\|\boldsymbol{v}_{\boldsymbol{g}}\|_{0,K}
\lesssim
  h_{K}^{\xi-\frac{1}{2}}
  \|\boldsymbol{g}\|_{\frac{1}{2}-\xi,e}.
\end{align}
From (\ref{eq27}), (\ref{eq28}) and the definition of the dual norm we deduce
\begin{align*}
\nonumber&\int_{e}\boldsymbol{\sigma(w) n }\cdot \boldsymbol{g} ds
=\int_{\partial K}\boldsymbol{\sigma(w) n }\cdot \boldsymbol{ v_{g}} ds
=\int_{K} \mathrm{div}\boldsymbol{\sigma(w)} \boldsymbol{v_{g}}dx +\int_{K}\boldsymbol{\sigma(w)}:\nabla \boldsymbol{ v_{g}}dx\\
\nonumber&\lesssim \|\mathrm{div}\boldsymbol{\sigma(w)} \|_{0,K}\|\boldsymbol{ v_{g}}\|_{0,K}
+\|\boldsymbol{\sigma(w)} \|_{\xi,K}\|\nabla \boldsymbol{ v_{g}}\|_{-\xi,K}\\
&\lesssim
  h_{K}^{\xi-\frac{1}{2}
  }\left(h_{K}^{1-\xi}\|\mathrm{div}\boldsymbol{\sigma(w)} \|_{0,K}
+\|\boldsymbol{\sigma(w)} \|_{\xi,K}\right)\|\boldsymbol{ g}\|_{\frac{1}{2}-\xi,e},
\end{align*}
by the definition of the dual norm we have
\begin{align*}
\|\boldsymbol{\sigma(w)}\boldsymbol{n}\|_{\xi-\frac{1}{2},e}=\sup_{\boldsymbol{g}\in \boldsymbol{H}^{\frac{1}{2}-\xi}(e)}
  \frac{\mid \int_{e}  \boldsymbol{\sigma(w) n }\cdot \boldsymbol{g} ds \mid}{\|\boldsymbol{g}\|_{\frac{1}{2}-\xi,e}}.
\end{align*}
Combining the above two relationships we obtain (\ref{eq26}).
\end{proof}

To obtain the a prior error estimate, we need the Brezzi-Douglas-Marini (BDM) interpolation 
which was introduced by Brezzi et al. (see \cite{Brezzi1985} and \cite{Brezzi1991}).

\begin{lemma}\label{lemma3.5}

If the mesh consists of triangle or tetrahedron, there is an interpolation operator $\boldsymbol{\pi}:\boldsymbol{H}^{1}(\Omega) \rightarrow \boldsymbol{S}^{h}$ such that it is valid the following properties for all
$\boldsymbol{u} \in \boldsymbol{H}^{1+r}(K)(r\geq0)$:

(1) $[\![\boldsymbol{n}\cdot \boldsymbol{\pi} \boldsymbol{u}]\!]=0$;

(2) $\|\boldsymbol{u}-\boldsymbol{\pi} \boldsymbol{u}\|_{m,K} \leq C h_{K}^{1+r-m}\|\boldsymbol{u}\|_{1+r,K}$ with $m=0,1,2$, and $m \leqslant r+1$;

(3) $\|\mathrm{div}(\boldsymbol{u}-\boldsymbol{\pi}\boldsymbol{u})\|_{m,K} \leq C h_{K}^{1+r-m}\|\mathrm{div}\boldsymbol{ u}\|_{1+r,K}$ with $m=0,1$, and $m \leqslant r$;
\end{lemma}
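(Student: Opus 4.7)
My plan is to follow the standard Brezzi--Douglas--Marini construction on a simplex. On each $K$, I would define $\boldsymbol{\pi}\boldsymbol{u}\in P_k(K)^d$ by prescribing two sets of moments: (i) the face moments $\int_e (\boldsymbol{\pi}\boldsymbol{u}\cdot\boldsymbol{n})\,q\,ds = \int_e (\boldsymbol{u}\cdot\boldsymbol{n})\,q\,ds$ for all $q\in P_k(e)$ and every face $e\subset\partial K$, and (ii) the interior moments (for $k\geq 2$) against $\nabla P_{k-1}(K)\oplus \{\boldsymbol{v}\in P_k(K)^d:\nabla\cdot\boldsymbol{v}=0,\ \boldsymbol{v}\cdot\boldsymbol{n}|_{\partial K}=0\}$. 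A dimension count shows that these moments are unisolvent on $P_k(K)^d$. Since $\boldsymbol{u}\in\boldsymbol{H}^{1+r}(K)$ with $r\geq 0$ has a well-defined normal trace in $H^{-1/2}(\partial K)$ that is regular enough once $r>0$, the moments are meaningful (for $r=0$ one uses the slight extension via $\boldsymbol{H}(\mathrm{div};K)$).

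Property (1) is immediate from the construction: on each face $e$, the normal component $\boldsymbol{n}\cdot\boldsymbol{\pi}\boldsymbol{u}|_e\in P_k(e)$ is uniquely determined by its moments against $P_k(e)$, hence equals the $L^2(e)$-projection of $\boldsymbol{n}\cdot\boldsymbol{u}|_e$ onto $P_k(e)$. This projection depends only on $\boldsymbol{u}\cdot\boldsymbol{n}$ on $e$, and since $\boldsymbol{n}\cdot\boldsymbol{u}$ is single-valued across interior faces of $\mathcal{T}_h$ for $\boldsymbol{u}\in\boldsymbol{H}^1(\Omega)$, the resulting $\boldsymbol{n}\cdot\boldsymbol{\pi}\boldsymbol{u}$ is single-valued, giving $[\![\boldsymbol{n}\cdot\boldsymbol{\pi}\boldsymbol{u}]\!]=0$.

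For property (2), I would pass to a reference simplex $\hat{K}$ via the affine pullback (or, when computing $\mathrm{div}$-based quantities, the Piola transformation, which BDM is known to commute with), and observe that $\boldsymbol{\pi}$ reproduces $P_k(K)^d$, hence in particular all polynomials of degree $\leq r$ up to $k$. Applying the Bramble--Hilbert lemma on $\hat{K}$ gives $\|\hat{\boldsymbol{u}}-\hat{\boldsymbol{\pi}}\hat{\boldsymbol{u}}\|_{m,\hat{K}}\lesssim |\hat{\boldsymbol{u}}|_{1+r,\hat{K}}$ for $0\leq m\leq r+1$, and the standard affine scaling estimates $h_K$-inequalities then deliver the claimed $h_K^{1+r-m}$ factor on $K$.

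For property (3), I would invoke the BDM commuting diagram property $\mathrm{div}(\boldsymbol{\pi}\boldsymbol{u})=Q_{k-1}(\mathrm{div}\boldsymbol{u})$, where $Q_{k-1}$ is the $L^2(K)$-projection onto $P_{k-1}(K)$; this identity is proved by testing against $P_{k-1}(K)$ and using Green's formula together with (i) and the interior moment condition (ii) to match the moments of $\mathrm{div}(\boldsymbol{\pi}\boldsymbol{u})$ with those of $\mathrm{div}\boldsymbol{u}$. Then $\|\mathrm{div}(\boldsymbol{u}-\boldsymbol{\pi}\boldsymbol{u})\|_{m,K}=\|\mathrm{div}\boldsymbol{u}-Q_{k-1}(\mathrm{div}\boldsymbol{u})\|_{m,K}$ and the usual $L^2$-projection error estimate yields the bound for $m\leq r$ and $m=0,1$. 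The main obstacle I anticipate is verifying the commuting diagram in a way that carefully respects the regularity hypothesis $r\geq 0$ (so that the face moments retain meaning) and tracking the correct powers of $h_K$ through the Piola scaling for the $\mathrm{div}$-based estimate; once these technical steps are in place, the three properties follow cleanly from the standard BDM theory of \cite{Brezzi1985,Brezzi1991}.
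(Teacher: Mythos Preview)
Your approach is correct and coincides with what the paper does: the paper's own proof is simply a citation of Propositions III.3.6--III.3.8 in Brezzi--Fortin \cite{Brezzi1991}, which are exactly the BDM unisolvence, the normal-trace continuity, the approximation estimate via Bramble--Hilbert on the reference element, and the commuting diagram $\mathrm{div}\,\boldsymbol{\pi}\boldsymbol{u}=Q_{k-1}(\mathrm{div}\,\boldsymbol{u})$ that you reproduce in detail. The only point the paper adds beyond the BDM citation is the extension to non-integer $r$, which it handles by invoking the fractional-order Scott--Zhang analysis of \cite{Ciarlet2013}; in your write-up you should make explicit that the Bramble--Hilbert step and the $L^2$-projection estimate are applied in their fractional-order form (or obtained by real interpolation between integer indices), since the lemma is stated for all real $r\geq 0$ and is used in the paper precisely at non-integer regularity $r<\tfrac12$.
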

\begin{proof}
From Propositions III.3.6, Proposition III.3.7 and Proposition III.3.8 in \cite{Brezzi1991} and theirs proofs,
combining Scott-Zhang interpolation (see \cite{Ciarlet2013}),
we know that the conclusions hold true when $r$ is a non-negative real number.
\end{proof}

From the arguing method of Theorem 8 in \cite{Hansbo2002}, we can provide the following theorem.
\begin{theorem}\label{thm3.6}
Let $\boldsymbol{w}$ and $\boldsymbol{w_{h}}$ be the solution of (\ref{eq11}) and (\ref{eq23}), respectively,
$\boldsymbol{w}\in \boldsymbol{H}^{1+r}(\Omega)$, $r\in (\frac{1}{2}, k]$. Then there holds
\begin{align}\label{eq29}
\|\boldsymbol{w}-\boldsymbol{w_{h}}\|_{h}\lesssim h^{r}\left(\sqrt{2\mu}\|\boldsymbol{w}\|_{1+r}+\sqrt{\lambda}\|\mathrm{div} \boldsymbol{w}\|_{r}\right).
\end{align}
\end{theorem}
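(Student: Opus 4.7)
The plan is to run the standard coercivity $+$ consistency $+$ interpolation argument with the BDM operator $\boldsymbol{\pi}$ of Lemma \ref{lemma3.5}, being careful to keep every implicit constant independent of $\mu$ and $\lambda$. I would write $\boldsymbol{w}-\boldsymbol{w_{h}}=\boldsymbol{\eta}+\boldsymbol{\xi_{h}}$, where $\boldsymbol{\eta}:=\boldsymbol{w}-\boldsymbol{\pi}\boldsymbol{w}$ and $\boldsymbol{\xi_{h}}:=\boldsymbol{\pi}\boldsymbol{w}-\boldsymbol{w_{h}}\in\boldsymbol{S}^{h}$. The triangle inequality $\|\boldsymbol{w}-\boldsymbol{w_{h}}\|_{h}\le\|\boldsymbol{\eta}\|_{h}+\|\boldsymbol{\xi_{h}}\|_{h}$ then reduces matters to two separate estimates.

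For $\|\boldsymbol{\eta}\|_{h}$, I would bound the six summands of (\ref{eq5}) one at a time. The two volume pieces come directly from parts (2) and (3) of Lemma \ref{lemma3.5}, yielding contributions $2\mu h^{2r}\|\boldsymbol{w}\|_{1+r}^{2}$ and $\lambda h^{2r}\|\mathrm{div}\boldsymbol{w}\|_{r}^{2}$. The interior $\mu$-jump $2\mu\gamma_{\mu}h_{e}^{-1}\|[\![\boldsymbol{\eta}]\!]\|_{0,e}^{2}$, the $\mathcal{E}_{h}^{b}$ boundary term, and the two averaging terms $2\mu h_{e}\|\{\boldsymbol{\varepsilon}(\boldsymbol{\eta})\boldsymbol{n}\}\|_{0,e}^{2}$ and $\lambda h_{e}\|\{\mathrm{div}\boldsymbol{\eta}\}\|_{0,e}^{2}$ are each handled by a scaled (fractional) trace inequality combined with Lemma \ref{lemma3.5}; this is legitimate because $r>\tfrac{1}{2}$. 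The \emph{crucial} observation is that the $\lambda$-weighted interior jump $\lambda\gamma_{\lambda}h_{e}^{-1}\|[\![\boldsymbol{\eta}\cdot\boldsymbol{n}]\!]\|_{0,e}^{2}$ vanishes: $\boldsymbol{w}\in\boldsymbol{H}^{1}(\Omega)$ has continuous normal trace, and Lemma \ref{lemma3.5}(1) enforces $[\![\boldsymbol{n}\cdot\boldsymbol{\pi}\boldsymbol{w}]\!]=0$. This is what prevents a locking factor $\lambda\gamma_{\lambda}/h$ from polluting the interpolation error, and it delivers precisely $\|\boldsymbol{\eta}\|_{h}\lesssim h^{r}\bigl(\sqrt{2\mu}\,\|\boldsymbol{w}\|_{1+r}+\sqrt{\lambda}\,\|\mathrm{div}\boldsymbol{w}\|_{r}\bigr)$.

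For $\|\boldsymbol{\xi_{h}}\|_{h}$, the two extra averaging pieces of $\|\cdot\|_{h}$ are absorbed into $\|\cdot\|_{\mathrm{dG}}$ on the discrete space via the inverse inequalities (\ref{eq6})--(\ref{eq7}), so that $\|\boldsymbol{\xi_{h}}\|_{h}\lesssim\|\boldsymbol{\xi_{h}}\|_{\mathrm{dG}}$. Coercivity (\ref{eq8}) together with the Galerkin orthogonality (\ref{eq24}) gives
\begin{equation*}
m\|\boldsymbol{\xi_{h}}\|_{\mathrm{dG}}^{2}\le a_{h}(\boldsymbol{\xi_{h}},\boldsymbol{\xi_{h}})=-a_{h}(\boldsymbol{\eta},\boldsymbol{\xi_{h}}),
\end{equation*}
and the continuity estimate (\ref{eq10}), applicable because $\boldsymbol{\eta}\in\boldsymbol{H}^{1+r}(\mathcal{T}_{h})$ with $r>\tfrac{1}{2}$, bounds the right-hand side by $\|\boldsymbol{\eta}\|_{h}\|\boldsymbol{\xi_{h}}\|_{h}\lesssim\|\boldsymbol{\eta}\|_{h}\|\boldsymbol{\xi_{h}}\|_{\mathrm{dG}}$. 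Dividing by $\|\boldsymbol{\xi_{h}}\|_{\mathrm{dG}}$ yields $\|\boldsymbol{\xi_{h}}\|_{h}\lesssim\|\boldsymbol{\eta}\|_{h}$, and combining with the interpolation bound closes the argument.

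The hard part I expect is keeping the constants locking-free through the continuity step, since $a_{h}$ weights its blocks by $2\mu$ and $\lambda$ in a non-uniform way; invoking (\ref{eq10}) as a pure black box risks smuggling in an extra factor of $\mu+\lambda$. My fallback would be to split $a_{h}(\boldsymbol{\eta},\boldsymbol{\xi_{h}})$ into its $2\mu$-block, its $\lambda$-block, and the boundary block, and pair each cross-term $\{\boldsymbol{\varepsilon}(\boldsymbol{\eta})\boldsymbol{n}\}\cdot[\![\boldsymbol{\xi_{h}}]\!]$ and $\{\mathrm{div}\boldsymbol{\eta}\}[\![\boldsymbol{\xi_{h}}\cdot\boldsymbol{n}]\!]$ by Cauchy--Schwarz against the matching averaging summand of $\|\boldsymbol{\eta}\|_{h}$ and jump summand of $\|\boldsymbol{\xi_{h}}\|_{\mathrm{dG}}$, while the dual pairings $\{\boldsymbol{\varepsilon}(\boldsymbol{\xi_{h}})\boldsymbol{n}\}\cdot[\![\boldsymbol{\eta}]\!]$ and $\{\mathrm{div}\boldsymbol{\xi_{h}}\}[\![\boldsymbol{\eta}\cdot\boldsymbol{n}]\!]$ are handled by Lemma \ref{lemma2.1} to transfer the average onto the volume norms of $\boldsymbol{\xi_{h}}$ (the second of these vanishes once more by the BDM identity). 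In the sub-case $\tfrac{1}{2}<r<1$ the averaging term $h_{e}\|\{\boldsymbol{\varepsilon}(\boldsymbol{\eta})\boldsymbol{n}\}\|_{0,e}^{2}$ requires the fractional trace bound in Lemma \ref{lemma3.4}, which is precisely what that lemma was tailored for. Tracking the weights this way, the $2\mu$ and $\lambda$ factors remain segregated and (\ref{eq29}) follows.
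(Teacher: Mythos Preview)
Your proposal is correct and follows essentially the same route as the paper: split through the BDM interpolant $\boldsymbol{\pi}\boldsymbol{w}$, use coercivity (\ref{eq8}) + Galerkin orthogonality (\ref{eq24}) + continuity (\ref{eq10}) to reduce $\|\boldsymbol{\xi_{h}}\|_{h}$ to $\|\boldsymbol{\eta}\|_{h}$, then bound $\|\boldsymbol{\eta}\|_{h}$ term by term, with the key locking-free ingredient being $[\![\boldsymbol{\eta}\cdot\boldsymbol{n}]\!]=0$ from Lemma~\ref{lemma3.5}(1). One small slip: Lemma~\ref{lemma3.4} is tailored to the \emph{low}-regularity regime $0<\xi<\tfrac{1}{2}$ and is invoked only later in Theorem~\ref{thm3.7}; for the present range $r>\tfrac{1}{2}$ the ordinary trace inequality (\ref{eq32}) (or its fractional-order analogue) already controls the averaging terms $h_{e}\|\{\boldsymbol{\varepsilon}(\boldsymbol{\eta})\boldsymbol{n}\}\|_{0,e}^{2}$ and $h_{e}\|\{\mathrm{div}\boldsymbol{\eta}\}\|_{0,e}^{2}$, so your fallback to Lemma~\ref{lemma3.4} is unnecessary here.
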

\begin{proof}
Taking $\boldsymbol{v}=\boldsymbol{\pi} \boldsymbol{w}$ and denoting $\boldsymbol{\eta}=\boldsymbol{w}-\boldsymbol{\pi}\boldsymbol{w}$, we have
\begin{align}\label{eq30}
\|\boldsymbol{w_{h}}-\boldsymbol{w}\|_{h}\leq \|\boldsymbol{w_{h}}-\boldsymbol{v}\|_{h}+\|\boldsymbol{\eta}\|_{h}.
\end{align}
Since $\|\cdot\|_{h}$ are $\|\cdot\|_{\mathrm{dG}}$ are equivalent in  $\boldsymbol{S}^{h}$, it follows from the coercivity and consistency that
\begin{align*}
\nonumber&\|\boldsymbol{w_{h}}-\boldsymbol{v}\|^{2}_{h} \lesssim m\|\boldsymbol{w_{h}}-\boldsymbol{v}\|^{2}_{\mathrm{dG}}
\leq a_{h}(\boldsymbol{w_{h}}-\boldsymbol{v},\boldsymbol{w_{h}}-\boldsymbol{v})\\
&\quad= a_{h}(\boldsymbol{\eta},\boldsymbol{w_{h}}-\boldsymbol{v})
\lesssim \|\boldsymbol{\eta}\|_{h}\|\boldsymbol{w_{h}}-\boldsymbol{v}\|_{h},
\end{align*}
which together with (\ref{eq30}) yields
\begin{align}\label{eq31}
\|\boldsymbol{w_{h}}-\boldsymbol{w}\|_{h}\lesssim \|\boldsymbol{\eta}\|_{h}.
\end{align}
To deal with the boundary terms we need the following trace inequality (see, e.g., \cite{Brenner2007}):
\begin{align}\label{eq32}
\|\boldsymbol{v}\|_{0,e}^{2} \lesssim \|\boldsymbol{v}\|_{0,K}\left(h_{K}^{-1}\|\boldsymbol{v}\|_{0,K}+\|\boldsymbol{v}\|_{1,K}\right), \quad \boldsymbol{v} \in \boldsymbol{H}^{1}(K).
\end{align}
From (\ref{eq32}) and Lemma \ref{lemma3.5}, we derive that
\begin{align*}
\nonumber&\sum_{e\in {\mathcal{E}^{b}_{h}}}\|\boldsymbol{\eta}\|^{2}_{0,e}
\lesssim \sum \limits_{K\in\mathcal{T}_{h}}\|\boldsymbol{\eta}\|_{0,K}\left(h_{K}^{-1}\|\boldsymbol{\eta}\|_{0,K} + \|\boldsymbol{\eta}\|_{1,K}\right)\\
&\lesssim \sum \limits_{K\in\mathcal{T}_{h}} \left( h^{-1}_{K}h^{2+2r}_{K}\| \boldsymbol{w}\|^{2}_{1+r,K}
+h^{1+2r}_{K}\| \boldsymbol{w}\|^{2}_{1+r,K}\right)
\lesssim\sum \limits_{K\in\mathcal{T}_{h}} h^{1+2r}_{K}\|\boldsymbol{w}\|^{2}_{1+r,K}.
\end{align*}
\noindent Since the mesh is shape-regular and satisfies $h_{e}^{-1} \lesssim h_{K}^{-1}$, from the definition of $\|\cdot\|_{h}$, the trace inequality (\ref{eq32}) and Lemma \ref{lemma3.5}, using the argument method similar to Theorem 8 in \cite{Hansbo2002}, we can deduce

\begin{align*}
\nonumber\|\boldsymbol{\eta}\|_{h}^{2}&\lesssim 2\mu\sum_{K\in {\mathcal{T}_{h}}}\|\boldsymbol{\varepsilon}(\boldsymbol{\eta})\|^{2}_{0,K}
+2\mu\sum_{e\in \mathcal{E}^{i}_{h}}h_{e}\|\{\boldsymbol{\varepsilon}(\boldsymbol{\eta})\boldsymbol{n}\}\|^{2}_{0,e}
+ 2\mu\sum_{e\in \mathcal{E}^{i}_{h}}\gamma_{\mu} h^{-1}_{e}\|[\![\boldsymbol{\eta}]\!]\|^{2}_{0,e}\\
\nonumber&\quad+\lambda \sum_{K\in {\mathcal{T}_{h}}}\|\mathrm{div}\boldsymbol{\eta}\|^{2}_{0,K}
   +\lambda\sum_{e\in \mathcal{E}^{i}_{h}}h_{e}\|\{\mathrm{div}\boldsymbol{\eta}\}\|^{2}_{0,e}
+\lambda\sum_{e\in \mathcal{E}^{i}_{h}}\gamma _{\lambda} h^{-1}_{e}\|[\![\boldsymbol{\eta} \cdot \boldsymbol{n}]\!]\|^{2}_{0,e}
   +\sum_{e\in \mathcal{E}^{b}_{h}}\|\boldsymbol{\eta}\|^{2}_{0,e}\\
\nonumber&\lesssim 2\mu\sum_{K\in {\mathcal{T}_{h}}}
     h_{K}^{2r}\|\boldsymbol{w}\|^{2}_{1+r,K}
+\lambda \sum_{K\in {\mathcal{T}_{h}}}h_{K}^{2r}\|\mathrm{div}\boldsymbol{w}\|^{2}_{r,K}
 \lesssim h^{2r}\left( 2\mu\|\boldsymbol{w}\|^{2}_{1+r}+\lambda \|\mathrm{div}\boldsymbol{w}\|^{2}_{r}\right),
\end{align*}
which implies that
\begin{align}\label{eq33}
\|\boldsymbol{\eta}\|_{h}\lesssim h^{r}(\sqrt{2\mu} \|\boldsymbol{w}\|_{1+r}+\sqrt{\lambda}\|\mathrm{div}\boldsymbol{w}\|_{r}).
\end{align}
Combining (\ref{eq33}) and (\ref{eq31}), we get (\ref{eq29}).
\end{proof}

\indent In order to prove the discrete solution operator converges to the exact solution operator in $L^2(\partial\Omega)$ norm, we first need to derive the error estimation of the source problem under the condition that the solution has low regularity.

\begin{theorem}\label{thm3.7}
For any given $\boldsymbol{f}\in \boldsymbol{L}^{2}(\partial\Omega)$, let $\boldsymbol{w}\in \boldsymbol{H}^{1+r}(\Omega)$~($0<r<\frac{1}{2}$) be the solution of (\ref{eq11}) and $\boldsymbol{w_{h}}$ be the solution of (\ref{eq23}). Suppose that the regularity estimate (\ref{eq22}) is valid, then there holds
\begin{align}\label{eq34}
\|\boldsymbol{w}-\boldsymbol{w_{h}}\|_{\mathrm{dG}} \lesssim  h^{r}\|\boldsymbol{f}\|_{0,\partial\Omega}.
\end{align}
\end{theorem}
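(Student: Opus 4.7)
My plan follows a Strang-type argument, parallel to the proof of Theorem \ref{thm3.6}, but adapted to the low-regularity regime $r<\tfrac{1}{2}$ in which the continuity bound (\ref{eq10}) is unavailable because $\{\boldsymbol{\varepsilon}(\boldsymbol{w})\boldsymbol{n}\}$ may fail to lie in $\boldsymbol{L}^{2}(e)$. The essential new ingredient will be Lemma \ref{lemma3.4}, applied to the stress tensor $\boldsymbol{\sigma}(\boldsymbol{w})$, whose divergence vanishes in $\Omega$ by the pointwise equilibrium equation underlying (\ref{eq11}). With $\boldsymbol{\pi}\boldsymbol{w}$ the BDM interpolant of Lemma \ref{lemma3.5}, I set $\boldsymbol{\eta}=\boldsymbol{w}-\boldsymbol{\pi}\boldsymbol{w}$ and $\boldsymbol{\xi}_{h}=\boldsymbol{w}_{h}-\boldsymbol{\pi}\boldsymbol{w}\in\boldsymbol{S}^{h}$, and use the triangle inequality to reduce matters to controlling $\|\boldsymbol{\eta}\|_{\mathrm{dG}}$ and $\|\boldsymbol{\xi}_{h}\|_{\mathrm{dG}}$ separately.

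The interpolation piece $\|\boldsymbol{\eta}\|_{\mathrm{dG}}$ would be bounded termwise by combining Lemma \ref{lemma3.5} with the trace inequality (\ref{eq32}); here BDM property (1) together with the continuity of $\boldsymbol{w}\in\boldsymbol{H}^{1}(\Omega)$ kills the divergence-jump contribution entirely, so that only the shear-jump, volume, and boundary-trace contributions remain, each scaling as $h^{r}$ with the expected $\sqrt{2\mu}$ and $\sqrt{\lambda}$ weights. For the discrete piece, coercivity (\ref{eq8}) together with the consistency identity (\ref{eq24}) give $m\|\boldsymbol{\xi}_{h}\|_{\mathrm{dG}}^{2}\leq a_{h}(\boldsymbol{\xi}_{h},\boldsymbol{\xi}_{h})=a_{h}(\boldsymbol{\eta},\boldsymbol{\xi}_{h})$, so the task becomes bounding $a_{h}(\boldsymbol{\eta},\boldsymbol{\xi}_{h})$. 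The volume and penalty terms, together with the symmetric face term $\sum_{e}\int_{e}\{\boldsymbol{\sigma}(\boldsymbol{\xi}_{h})\boldsymbol{n}\}\cdot[\![\boldsymbol{\eta}]\!]\,ds$ in which the polynomial factor $\boldsymbol{\xi}_{h}$ absorbs the boundary derivative via Lemma \ref{lemma2.1}, are all controlled by Cauchy-Schwarz plus interpolation and deliver the $h^{r}$ rate.

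The main obstacle is the remaining face contribution $\sum_{e}\int_{e}\{\boldsymbol{\sigma}(\boldsymbol{\eta})\boldsymbol{n}\}\cdot[\![\boldsymbol{\xi}_{h}]\!]\,ds$: since $\boldsymbol{w}\in\boldsymbol{H}^{1+r}(\Omega)$ with $r<\tfrac{1}{2}$, the trace $\boldsymbol{\sigma}(\boldsymbol{w})\boldsymbol{n}$ lives only in the negative-order space $\boldsymbol{H}^{r-1/2}(e)$. My plan is to split $\boldsymbol{\sigma}(\boldsymbol{\eta})=\boldsymbol{\sigma}(\boldsymbol{w})-\boldsymbol{\sigma}(\boldsymbol{\pi}\boldsymbol{w})$; the polynomial piece $\boldsymbol{\sigma}(\boldsymbol{\pi}\boldsymbol{w})$ is harmless through Lemma \ref{lemma2.1}, while for $\boldsymbol{\sigma}(\boldsymbol{w})$ I exploit $\mathrm{div}\,\boldsymbol{\sigma}(\boldsymbol{w})=\boldsymbol{0}$ to apply Lemma \ref{lemma3.4} with $\xi=r$, yielding $\|\boldsymbol{\sigma}(\boldsymbol{w})\boldsymbol{n}\|_{r-1/2,e}\lesssim h_{K}^{r-1/2}\|\boldsymbol{\sigma}(\boldsymbol{w})\|_{r,K}$; the face integral is then bounded by the duality pairing $\|\boldsymbol{\sigma}(\boldsymbol{w})\boldsymbol{n}\|_{r-1/2,e}\|[\![\boldsymbol{\xi}_{h}]\!]\|_{1/2-r,e}$, and a polynomial inverse estimate converts $\|[\![\boldsymbol{\xi}_{h}]\!]\|_{1/2-r,e}$ back to the $L^{2}$-jump norm controlled by the penalty part of $\|\boldsymbol{\xi}_{h}\|_{\mathrm{dG}}$. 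The delicate point is to distribute the powers of $h$ and to track the $\mu,\lambda$ weights so as to preserve the $h^{r}$ rate with a Lam\'{e}-independent constant; after absorbing $\|\boldsymbol{\xi}_{h}\|_{\mathrm{dG}}$ via Young's inequality and invoking the regularity bound (\ref{eq22}) to translate $\|\boldsymbol{w}\|_{1+r}+\lambda\|\mathrm{div}\,\boldsymbol{w}\|_{r}$ into $\|\boldsymbol{f}\|_{0,\partial\Omega}$, one should arrive at (\ref{eq34}).
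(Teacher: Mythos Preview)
Your overall Strang-type strategy, the use of the BDM interpolant (so that $[\![\boldsymbol{\eta}\cdot\boldsymbol{n}]\!]=0$ kills the divergence--jump penalty), the coercivity/consistency reduction to $a_{h}(\boldsymbol{\eta},\boldsymbol{\xi}_{h})$, and the identification of $\sum_{e}\int_{e}\{\boldsymbol{\sigma}(\boldsymbol{\eta})\boldsymbol{n}\}\cdot[\![\boldsymbol{\xi}_{h}]\!]\,ds$ as the only term requiring Lemma~\ref{lemma3.4} are all exactly what the paper does. The estimate of $\|\boldsymbol{\eta}\|_{\mathrm{dG}}$ and of the ``symmetric'' face term via Lemma~\ref{lemma2.1} is also fine.

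The genuine gap is in your treatment of the remaining face term. Splitting $\boldsymbol{\sigma}(\boldsymbol{\eta})=\boldsymbol{\sigma}(\boldsymbol{w})-\boldsymbol{\sigma}(\boldsymbol{\pi}\boldsymbol{w})$ destroys the approximation smallness: each piece has an $O(1)$ normal trace, and Lemma~\ref{lemma2.1} applied to the polynomial piece returns $\|\boldsymbol{\sigma}(\boldsymbol{\pi}\boldsymbol{w})\|_{0,K}\sim\|\boldsymbol{\sigma}(\boldsymbol{w})\|_{0,K}$, with no factor of $h$. Worse, for the $\boldsymbol{\sigma}(\boldsymbol{w})$ piece your choice $\xi=r$ in Lemma~\ref{lemma3.4} gives $\|\boldsymbol{\sigma}(\boldsymbol{w})\boldsymbol{n}\|_{r-1/2,e}\lesssim h_{K}^{r-1/2}\|\boldsymbol{\sigma}(\boldsymbol{w})\|_{r,K}$, and after the inverse estimate $\|[\![\boldsymbol{\xi}_{h}]\!]\|_{1/2-r,e}\lesssim h_{e}^{r-1/2}\|[\![\boldsymbol{\xi}_{h}]\!]\|_{0,e}$ and Cauchy--Schwarz you arrive at
\[
\Bigl(\sum_{K}h_{K}^{4r-1}\|\boldsymbol{\sigma}(\boldsymbol{w})\|_{r,K}^{2}\Bigr)^{1/2}\|\boldsymbol{\xi}_{h}\|_{\mathrm{dG}},
\]
which on a quasi-uniform mesh scales like $h^{2r-1/2}$: strictly worse than $h^{r}$ for every $r<\tfrac12$, and divergent for $r<\tfrac14$.

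The paper avoids this by \emph{not} splitting: it applies Lemma~\ref{lemma3.4} directly to $\boldsymbol{\sigma}(\boldsymbol{\eta})$, noting that $\mathrm{div}\,\boldsymbol{\sigma}(\boldsymbol{\eta})=-\mathrm{div}\,\boldsymbol{\sigma}(\boldsymbol{\pi}\boldsymbol{w})$ is a polynomial (so an inverse inequality handles the divergence term), and it chooses the trace exponent $\xi=r+\tfrac12(\tfrac12-r)>r$ rather than $\xi=r$. The point is that the regularity estimate (\ref{eq22}) holds for \emph{every} exponent below $\tfrac12$, so one may invoke $\boldsymbol{w}\in\boldsymbol{H}^{1+\xi}(\Omega)$ and bound $\|\boldsymbol{\sigma}(\boldsymbol{\eta})\|_{\xi,K}\lesssim 2\mu\|\boldsymbol{w}\|_{1+\xi,K}+\lambda\|\mathrm{div}\,\boldsymbol{w}\|_{\xi,K}$ by interpolant stability. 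With this $\xi$ one has $h_{e}^{2\xi}\cdot h_{K}^{2(\xi-1/2)}=h_{K}^{4\xi-1}=h_{K}^{2r}$, which is exactly the power needed for (\ref{eq34}). Your plan becomes correct once you keep $\boldsymbol{\sigma}(\boldsymbol{\eta})$ intact and take $\xi$ strictly between $r$ and $\tfrac12$.
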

\begin{proof}
Let $\boldsymbol{v}=\boldsymbol{\pi} \boldsymbol{w}$ and $\boldsymbol{\eta}=\boldsymbol{w}-\boldsymbol{\pi}\boldsymbol{w}$, we have
\begin{align}\label{eq35}
\|\boldsymbol{w_{h}}-\boldsymbol{w}\|_{\mathrm{dG}}\leq \|\boldsymbol{w_{h}}-\boldsymbol{v}\|_{\mathrm{dG}}+\|\boldsymbol{\eta}\|_{\mathrm{dG}}.
\end{align}

From (\ref{eq22}) we know that for any given $ \boldsymbol{f} \in \boldsymbol{L}^{2}(\partial \Omega), \boldsymbol{w}\in \boldsymbol{H}^{1+\xi}(\Omega),0<\xi<\frac{1}{2}$ and $\xi$ can be close to $\frac{1}{2}$ arbitrarily. Then
 using (\ref{eq8}), (\ref{eq24}), 
 the definition of $a_{h}(\cdot,\cdot)$  with the fact that $[\![\boldsymbol{\eta}\cdot\boldsymbol{n}]\!]=0$ on inner face $e$, the inverse estimation and the Cauchy-Schwarz inequality, we deduce that
\begin{align*}
&m\|\boldsymbol{w_{h}}-\boldsymbol{v}\|^{2}_{\mathrm{dG}}\leq a_{h}(\boldsymbol{w_{h}}-\boldsymbol{v},\boldsymbol{w_{h}}-\boldsymbol{v})=a_{h}(\boldsymbol{\eta},\boldsymbol{w_{h}}-\boldsymbol{v})\nonumber\\
& =2\mu\left( \sum\limits_{K\in\mathcal{T}_{h}}\int_{K}\boldsymbol{\varepsilon}(\boldsymbol{\eta}):\boldsymbol{\varepsilon}(\boldsymbol{w_{h}}-\boldsymbol{v})dx
-\sum_{e\in \mathcal{E}^{i}_{h}}\int_{e}\{\boldsymbol{\varepsilon}(\boldsymbol{\eta})\boldsymbol{n}\}\cdot[\![\boldsymbol{w_{h}}-\boldsymbol{v}]\!]ds \right.\\
\nonumber&\quad\left.
-\sum_{e\in \mathcal{E}^{i}_{h}}\int_{e}\{\boldsymbol{\varepsilon}(\boldsymbol{w_{h}}-\boldsymbol{v})\boldsymbol{n}\}\cdot[\![\boldsymbol{\eta}]\!]ds
+\sum_{e\in \mathcal{E}^{i}_{h}}\frac{\gamma_{\mu} }{h_{e}} \int_{e}[\![\boldsymbol{\eta}]\!]\cdot[\![\boldsymbol{w_{h}}-\boldsymbol{v}]\!]ds
\right)\\
\nonumber&\quad+\lambda\left(
\sum \limits_{K\in\mathcal{T}_{h}}\int_{K}(\mathrm{div}\boldsymbol{\eta})\mathrm{div}(\boldsymbol{w_{h}}-\boldsymbol{v})dx
-\sum_{e\in \mathcal{E}^{i}_{h}}\int_{e}\{\mathrm{div}\boldsymbol{\eta}\}[\![(\boldsymbol{w_{h}}-\boldsymbol{v})\cdot\boldsymbol{n}]\!]ds\right.\\
\nonumber&\quad\left.
-\sum_{e\in \mathcal{E}^{i}_{h}}\int_{e}\{\mathrm{div}(\boldsymbol{w_{h}}-\boldsymbol{v})\}
 [\![\boldsymbol{\eta}\cdot\boldsymbol{n}]\!]ds
+ \sum_{e\in \mathcal{E}^{i}_{h}}\frac{\gamma_{\lambda}{\color{red} }}{h_{e}} \int_{e}
[\![\boldsymbol{\eta}\cdot\boldsymbol{n}]\!]
[\![(\boldsymbol{w_{h}}-\boldsymbol{v})\cdot\boldsymbol{n}]\!]ds\right)\\
&\quad+\sum_{e\in \mathcal{E}^{b}_{h}}\int_{e}\boldsymbol{\eta}\cdot(\boldsymbol{w_{h}}-\boldsymbol{v})ds,\\
&\lesssim\|\boldsymbol{w_{h}}-\boldsymbol{v}\|_{\mathrm{dG}}\|\boldsymbol{\eta}\|_{\mathrm{dG}}
-2\mu
\sum_{e\in \mathcal{E}^{i}_{h}}\int_{e}\{\boldsymbol{\varepsilon}(\boldsymbol{\eta})\boldsymbol{n}\}\cdot[\![\boldsymbol{w_{h}-v}]\!]ds
-\lambda
\sum_{e\in \mathcal{E}^{i}_{h}}\int_{e}\{\mathrm{div}\boldsymbol{\eta}\}[\![(\boldsymbol{w_{h}-v})\cdot \boldsymbol{n}]\!]ds
\nonumber\\
&\lesssim\|\boldsymbol{w_{h}}-\boldsymbol{v}\|_{\mathrm{dG}}\|\boldsymbol{\eta}\|_{\mathrm{dG}}+
   \sum_{e\in \mathcal{E}^{i}_{h}}\| (2\mu\boldsymbol{\varepsilon}(\boldsymbol{\eta})+\lambda \mathrm{div}\boldsymbol{\eta} \mathbf{I})\boldsymbol{n}\|_{\xi-\frac{1}{2}, e}\|[\![\boldsymbol{w_{h}-v}]\!]\|_{\frac{1}{2}-\xi, e}\nonumber
\end{align*}
\begin{align*}
&\lesssim\|\boldsymbol{w_{h}}-\boldsymbol{v}\|_{\mathrm{dG}}\|\boldsymbol{\eta}\|_{\mathrm{dG}}
   +\sum_{e\in \mathcal{E}^{i}_{h}}\| (2\mu\boldsymbol{\varepsilon}(\boldsymbol{\eta})+\lambda \mathrm{div}\boldsymbol{\eta} \mathbf{I})\boldsymbol{n}\|_{\xi-\frac{1}{2}, e} h_{e}^{\xi-\frac{1}{2}}\|[\![\boldsymbol{w_{h}-v}]\!]\|_{0, e}\nonumber\\
&\lesssim \|\boldsymbol{w_{h}}-\boldsymbol{v}\|_{\mathrm{dG}}\|\boldsymbol{\eta}\|_{\mathrm{dG}}
+\left(
\sum_{e\in \mathcal{E}^{i}_{h}}h^{2\xi}_{e}\|(2\mu\boldsymbol{\varepsilon}(\boldsymbol{\eta})+\lambda \mathrm{div}\boldsymbol{\eta} \mathbf{I})\boldsymbol{n}\|^{2}_{\xi-\frac{1}{2}, e}
  \right)^{\frac{1}{2}}
  \left(
  \sum_{e\in \mathcal{E}^{i}_{h}}h^{-1}_{e}\|[\![\boldsymbol{w_{h}-v}]\!]\|^{2}_{0, e}
  \right)^{\frac{1}{2}}\nonumber\\
  &\lesssim \|\boldsymbol{w_{h}}-\boldsymbol{v}\|_{\mathrm{dG}} \left(\|\boldsymbol{\eta}\|_{\mathrm{dG}}
   +(\sum_{e\in \mathcal{E}^{i}_{h}}h^{2\xi}_{e}\|(2\mu\boldsymbol{\varepsilon}(\boldsymbol{\eta})+\lambda \mathrm{div}\boldsymbol{\eta} \mathbf{I})\boldsymbol{n}\|^{2}_{\xi-\frac{1}{2}, e})^{\frac{1}{2}}\right).
\end{align*}

\noindent Through simplification, we get
\begin{align}\label{eq36}
 &m\|\boldsymbol{w_{h}}-\boldsymbol{v}\|_{\mathrm{dG}}
  \lesssim \|\boldsymbol{\eta}\|_{\mathrm{dG}}
  +\left(\sum_{e\in \mathcal{E}^{i}_{h}}h_{e}^{2\xi}\| (2\mu\boldsymbol{\varepsilon}(\boldsymbol{\eta})+\lambda \mathrm{div}\boldsymbol{\eta} \mathbf{I})\boldsymbol{n}\|^{2}_{\xi-\frac{1}{2}, e}\right)^{\frac{1}{2}}.
\end{align}
Combining (\ref{eq35}) and (\ref{eq36}), we have
\begin{align}\label{eq37}
\|\boldsymbol{w}-\boldsymbol{w_{h}}\|_{\mathrm{dG}}\lesssim \|\boldsymbol{\eta}\|_{\mathrm{dG}}
+\left(
\sum_{e\in \mathcal{E}^{i}_{h}}h_{e}^{2\xi}\| (2\mu\boldsymbol{\varepsilon}(\boldsymbol{\eta})+\lambda \mathrm{div}\boldsymbol{\eta} \mathbf{I})\boldsymbol{n}\|^{2}_{\xi-\frac{1}{2}, e}
\right)^{\frac{1}{2}}.
\end{align}

\indent We first estimate the first term $\|\boldsymbol{\eta}\|_{\mathrm{dG}}$ on the right-hand side of (\ref{eq37}). Note that the mesh satisfies $h_e^{-1} \lesssim h_K^{-1}$. Starting from the definition of $\|\cdot\|_{\mathrm{dG}}$ and using the crucial fact that $[\![\boldsymbol{\eta}\cdot\boldsymbol{n}]\!]=0$, together with trace inequality (\ref{eq32}), Lemma \ref{lemma3.5}, and the triangle inequality to split the remaining jump contributions, we deduce
\begin{align}\label{eq38}
\nonumber&\|\boldsymbol{\eta}\|^{2}_{\mathrm{dG}}
=2\mu  \sum \limits_{K\in\mathcal{T}_{h}}\|\boldsymbol{\varepsilon}(\boldsymbol{\eta}) \|^{2}_{0,K}+2\mu\sum_{e\in \mathcal{E}^{i}_{h}}\frac{\gamma_{\mu}}{h_{e}} \|[\![\boldsymbol{\eta}]\!]\|^{2}_{0,e}
    +\lambda \sum \limits_{K\in\mathcal{T}_{h}}\|\mathrm{div} \boldsymbol{\eta}\|^{2}_{0,K}
         +\sum_{e\in \mathcal{E}^{b}_{h}}\|\boldsymbol{\eta}\|^{2}_{0,e}\\
\nonumber&\lesssim  2\mu \sum \limits_{K\in\mathcal{T}_{h}}\|\nabla \boldsymbol{\eta}+ (\nabla \boldsymbol{\eta})^{T} \|^{2}_{0,K}
   +2\mu\sum \limits_{K\in\mathcal{T}_{h}}\left(h^{-2}_{K}\| \boldsymbol{\eta}\|^{2}_{0,K}
  +h_{K}^{-1}\| \boldsymbol{\eta}\|_{0,K} \| \boldsymbol{\eta}\|_{1,K}\right)\\
\nonumber&\quad+\sum \limits_{K\in\mathcal{T}_{h}} \lambda h^{2r}_{K}\|\mathrm{div}\boldsymbol{w}\|^{2}_{r,K}
     +\sum \limits_{K\in\mathcal{T}_{h}}\|\boldsymbol{\eta}\|_{0,K}\left( h_{K}^{-1}\|\boldsymbol{\eta}\|_{0,K}+\|\boldsymbol{\eta}\|_{1,K}\right)\\
\nonumber&\lesssim 2\mu\sum \limits_{K\in\mathcal{T}_{h}} \|\boldsymbol{\eta}\|^{2}_{1,K}
   +2\mu \sum \limits_{K\in\mathcal{T}_{h}}
   \left(
      h^{2r}_{K}\| \boldsymbol{w}\|^{2}_{1+r,K}+h^{2r}_{K}\| \boldsymbol{w}\|_{1+r,K}^{2}
   \right)\\
\nonumber&\quad+ \sum \limits_{K\in\mathcal{T}_{h}} \lambda h^{2r}_{K}\|\mathrm{div}\boldsymbol{w}\|^{2}_{r,K}
    + \sum \limits_{K\in\mathcal{T}_{h}} \left( h^{1+2r}_{K}\| \boldsymbol{w}\|^{2}_{1+r,K}
 +h^{1+2r}_{K}\| \boldsymbol{w}\|^{2}_{1+r,K} \right)\\
\nonumber&\lesssim 2\mu \sum \limits_{K\in\mathcal{T}_{h}} h^{2r}_{K}\|\boldsymbol{w}\|^{2}_{1+r,K}
    +2\mu \sum \limits_{K\in\mathcal{T}_{h}}h^{2r}_{K} \| \boldsymbol{w}\|^{2}_{1+r,K}
    +\sum \limits_{K\in\mathcal{T}_{h}} \lambda h^{2r}_{K}\|\mathrm{div}\boldsymbol{w}\|^{2}_{r,K}
    +\sum \limits_{K\in\mathcal{T}_{h}} h^{2r+1}_{K}\|\boldsymbol{w}\|^{2}_{1+r,K}\\
&\lesssim 2\mu h^{2r}\|\boldsymbol{w}\|^{2}_{1+r}+\lambda h^{2r}\|\mathrm{div}\boldsymbol{w}\|^{2}_{r},
\end{align}
which together with (\ref{eq22}) yields
\begin{align}\label{eq39}
\|\boldsymbol{\eta}\|_{\mathrm{dG}}\lesssim h^{r}\|\boldsymbol{f}\|_{0,\partial\Omega}.
\end{align}
Next, we will estimate the second term on the right-hand side of (\ref{eq37}). Invoking (\ref{eq26})
 with $\xi=r+\frac{1}{2}(\frac{1}{2}-r)\doteq r+\delta $,
the inverse estimation, formulas $\textit {(2)}$ and $\textit {(3)}$ in  Lemma \ref{lemma3.5} and (\ref{eq22}), we deduce

\begin{align}\label{eq40}
\nonumber&\left( \sum_{e\in \mathcal{E}^{i}_{h}}h_{e}^{2\xi}\| (2\mu\boldsymbol{\varepsilon}(\boldsymbol{\eta})
  +\lambda \mathrm{div} \boldsymbol{\eta} \mathbf{I}) \boldsymbol{n}\|^{2}_{\xi-\frac{1}{2}, e}   \right)^{\frac{1}{2}}\\
\nonumber&\lesssim\left( \sum_{K\in \mathcal{T}_{h}}h_{K}^{2\xi}h^{-2\delta}_{K}\left(h^{1-\xi}_{K}
      \|\mathrm{div} \boldsymbol{\sigma}(\boldsymbol{\eta})\|_{0,K}
      +\|\boldsymbol{\sigma}(\boldsymbol{\eta})\|_{\xi,K}\right)^{2}
  \right)^{\frac{1}{2}}\\
\nonumber&\lesssim\left( \sum_{K\in \mathcal{T}_{h}}h_{K}^{2\xi-2\delta}\left(h^{1-\xi}_{K}
  \|\mathrm{div} \boldsymbol{\sigma}(\boldsymbol{\pi w})\|_{0,K}
  +\|\boldsymbol{\sigma}(\boldsymbol{\eta})\|_{\xi,K}\right)^{2}
  \right)^{\frac{1}{2}}\\
\nonumber&\lesssim\left( \sum_{K\in \mathcal{T}_{h}}h_{K}^{2\xi-2\delta}\left(h^{1-\xi}_{K}
  \|\boldsymbol{\sigma}(\boldsymbol{\pi w})\|_{1,K}
  +\|\boldsymbol{\sigma}(\boldsymbol{\eta})\|_{\xi,K}\right)^{2}
  \right)^{\frac{1}{2}}\\
\nonumber&\lesssim\left( \sum_{K\in \mathcal{T}_{h}}h_{K}^{2\xi-2\delta}\left(h^{1-\xi}_{K}
  (2\mu h^{\xi-1}_{K}\|\boldsymbol{\varepsilon}(\boldsymbol{\pi w}) \|_{\xi,K}+\lambda h^{\xi-1}_{K}\|\mathrm{div} (\boldsymbol{\pi w})\boldsymbol{I}\|_{\xi,K})
  +\|2\mu\boldsymbol{\varepsilon}(\boldsymbol{\eta}) \|_{\xi,K}+\|\lambda \mathrm{div} \boldsymbol{\eta}\boldsymbol{I} \|_{\xi,K}\right)^{2}
  \right)^{\frac{1}{2}}\\
\nonumber&\lesssim\left( \sum_{K\in \mathcal{T}_{h}}h_{K}^{2\xi-2\delta}\left(
  2\mu \|(\boldsymbol{\pi w-w+w}) \|_{1+\xi,K}+\lambda \|\mathrm{div} \boldsymbol{(\pi w-w+w)}\|_{\xi,K}
   \right.\right.\\
\nonumber&\quad \left.\left.
  +\|2\mu\boldsymbol{\eta}  \|_{1+\xi,K}+\|\lambda \mathrm{div} \boldsymbol{\eta} \|_{\xi,K}\right)^{2}
  \right)^{\frac{1}{2}}\\
\nonumber&\lesssim\left( \sum_{K\in \mathcal{T}_{h}}h_{K}^{2\xi-2\delta}\left(
  2\mu \|\boldsymbol{w} \|_{1+\xi,K}+\lambda \|\mathrm{div} \boldsymbol{w}\|_{\xi,K}
  +2\mu\|\boldsymbol{w}  \|_{1+\xi,K}+\lambda\| \mathrm{div} \boldsymbol{w} \|_{\xi,K}\right)^{2}
  \right)^{\frac{1}{2}}\\
&\lesssim h^{\xi-\delta}\left(
  2\mu \|\boldsymbol{w} \|_{1+\xi}+\lambda \|\mathrm{div} \boldsymbol{w}\|_{\xi}
  \right)
\lesssim h^{r}\|\boldsymbol{f} \|_{0,\partial\Omega}.
\end{align}
 Substituting (\ref{eq39}) and (\ref{eq40}) into (\ref{eq37}), we obtain the desired result (\ref{eq34}) which shows that the error is independent of $\lambda$.
\end{proof}

Define the operator $T:\boldsymbol{L}^{2}(\partial\Omega)\rightarrow \boldsymbol{L}^{2}(\partial\Omega)$ by
\begin{align*}
T\boldsymbol{f}=(A\boldsymbol{f})',
\end{align*}
where $'$ denotes the restriction to $\partial\Omega$. Then (\ref{eq2}) has the equivalent operator form:
\begin{align*}
    T\boldsymbol{u}=\frac{1}{\rho}\boldsymbol{u}.
\end{align*}
Define the discrete solution operator
 $A_{h}:\boldsymbol{L}^{2}(\partial\Omega)\to \boldsymbol{S}^{h}$ satisfying
 \begin{align*}
a_{h}(A_{h}\boldsymbol{f},\boldsymbol{v})=b_{h}(\boldsymbol{f}, \boldsymbol{v}), \quad\forall \boldsymbol{v}\in \boldsymbol{S}^{h}.
 \end{align*}
 Let $\delta \boldsymbol{S}^{h}$ be the function space defined on $\partial\Omega$ which contains restriction of functions in $\boldsymbol{S}^{h}$ to $\partial\Omega$.
Define the operator $T_{h}:\boldsymbol{L}^{2}(\partial\Omega)\rightarrow \delta \boldsymbol{S}^{h}\subset \boldsymbol{L}^{2}(\partial\Omega)$ satisfying $T_{h}\boldsymbol{f}=(A_{h}\boldsymbol{f})'$. Then (\ref{eq3}) has the equivalent operator form:
\begin{align*}
    T_{h}\boldsymbol{u_{h}}=\frac{1}{\rho_{h}}\boldsymbol{u_{h}}.
\end{align*}
Thus, from (\ref{eq8}) together with the fact that $\|\cdot\|_{h}$ and $\|\cdot\|_{\mathrm{dG}}$ are equivalent in $\boldsymbol{S}^{h}$ and the definition of $A_h$, we deduce that
\begin{align*}
\|A_{h}\boldsymbol{f}\|^{2}_{h}\lesssim a_{h}(A_{h}\boldsymbol{f},A_{h}\boldsymbol{f})=C b_{h}(\boldsymbol{f},A_{h}\boldsymbol{f})
\lesssim \|\boldsymbol{f}\|_{0,\partial\Omega}\|A_{h}\boldsymbol{f}\|_{0,\partial\Omega}
\lesssim \|\boldsymbol{f}\|_{0,\partial\Omega}\|A_{h}\boldsymbol{f}\|_{h},
\end{align*}
which yields
\begin{align}\label{eq41}
\|A_{h}\boldsymbol{f}\|_{h}\lesssim \|\boldsymbol{f}\|_{0,\partial\Omega}.
\end{align}

\begin{theorem}\label{thm3.8}
Suppose that the regularity estimate (\ref{eq22}) is valid, and let $\boldsymbol{w}$ and $\boldsymbol{w_{h}}$  be the solutions of (\ref{eq11}) and (\ref{eq23}), respectively. When $\boldsymbol{f}\in \boldsymbol{L}^{2}(\partial\Omega)$ and $\boldsymbol{w}\in \boldsymbol{H}^{1+r}(\Omega)$~($0<r<\frac{1}{2}$), there holds
\begin{align}\label{eq42}
\|\boldsymbol{w}-\boldsymbol{w_{h}}\|_{0,\partial\Omega}\lesssim h^{2r}\|\boldsymbol{f}\|_{0,\partial\Omega};
\end{align}
When $\boldsymbol{w}\in \boldsymbol{H}^{1+t}(\Omega)$ $(\frac{1}{2}<t\leq k)$, there holds
\begin{align}\label{eq43}
\|\boldsymbol{w}-\boldsymbol{w_{h}}\|_{0,\partial\Omega}\lesssim h^{r+t}\left(\sqrt{2\mu}\|\boldsymbol{w}\|_{1+t}+\sqrt{\lambda}\|\mathrm{div} \boldsymbol{w}\|_{t}\right).
\end{align}
\end{theorem}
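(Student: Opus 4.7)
The plan is an Aubin--Nitsche-type duality argument tailored to the boundary $\boldsymbol{L}^{2}$ norm. Set $\boldsymbol{g}:=(\boldsymbol{w}-\boldsymbol{w_{h}})|_{\partial\Omega}\in\boldsymbol{L}^{2}(\partial\Omega)$ and introduce the auxiliary (dual) problem: find $\boldsymbol{\phi}\in\boldsymbol{H}^{1}(\Omega)$ such that $a(\boldsymbol{v},\boldsymbol{\phi})=b(\boldsymbol{g},\boldsymbol{v})$ for all $\boldsymbol{v}\in\boldsymbol{H}^{1}(\Omega)$. Since $\boldsymbol{g}\in\boldsymbol{L}^{2}(\partial\Omega)$, the regularity estimate (\ref{eq22}) applied to the dual yields $\boldsymbol{\phi}\in\boldsymbol{H}^{1+r}(\Omega)$ with $r<\tfrac{1}{2}$ arbitrarily close to $\tfrac{1}{2}$ and $\|\boldsymbol{\phi}\|_{1+r}+\lambda\|\mathrm{div}\boldsymbol{\phi}\|_{r}\lesssim\|\boldsymbol{g}\|_{0,\partial\Omega}$; the target rates in (\ref{eq42}) and (\ref{eq43}) are then exactly the products of the primal and dual approximation rates.

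Applying the consistency identity (\ref{eq25}) to the dual problem (which is valid on $\boldsymbol{H}^{1+r}(\mathcal{T}_{h})$) and exploiting the symmetry of $a_{h}(\cdot,\cdot)$, together with Galerkin orthogonality (\ref{eq24}) applied with test function $\boldsymbol{\pi}\boldsymbol{\phi}\in\boldsymbol{S}^{h}$, I would obtain the key identity
\[
\|\boldsymbol{g}\|_{0,\partial\Omega}^{2}=b_{h}(\boldsymbol{g},\boldsymbol{w}-\boldsymbol{w_{h}})=a_{h}(\boldsymbol{w}-\boldsymbol{w_{h}},\boldsymbol{\phi})=a_{h}(\boldsymbol{w}-\boldsymbol{w_{h}},\boldsymbol{\phi}-\boldsymbol{\pi}\boldsymbol{\phi}),
\]
so that it only remains to bound the right-hand side as the product of a primal and a dual factor.

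To estimate this bilinear form I would repeat, term by term, the expansion of $a_{h}$ performed in the proof of Theorem \ref{thm3.7}. The standard contributions are controlled by $\|\boldsymbol{w}-\boldsymbol{w_{h}}\|_{\mathrm{dG}}\|\boldsymbol{\phi}-\boldsymbol{\pi}\boldsymbol{\phi}\|_{\mathrm{dG}}$, whereas the face integrals involving the low-regularity fluxes $\{\boldsymbol{\varepsilon}(\boldsymbol{\phi}-\boldsymbol{\pi}\boldsymbol{\phi})\boldsymbol{n}\}$ and $\{\mathrm{div}(\boldsymbol{\phi}-\boldsymbol{\pi}\boldsymbol{\phi})\}$ are reinterpreted as the $\langle H^{\xi-\frac{1}{2}}(e),H^{\frac{1}{2}-\xi}(e)\rangle$ duality pairing, bounded through Lemma \ref{lemma3.4} on the flux side and through the BDM interpolation estimates of Lemma \ref{lemma3.5} on the jump side, exactly as in the derivation of (\ref{eq40}); several cross terms drop out automatically because $[\![(\boldsymbol{\phi}-\boldsymbol{\pi}\boldsymbol{\phi})\cdot\boldsymbol{n}]\!]=0$ by Lemma \ref{lemma3.5}(1) and $[\![\boldsymbol{w}]\!]=0$ since $\boldsymbol{w}\in\boldsymbol{H}^{1}(\Omega)$. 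The dual approximation factor is $\|\boldsymbol{\phi}-\boldsymbol{\pi}\boldsymbol{\phi}\|_{\mathrm{dG}}\lesssim h^{r}\|\boldsymbol{g}\|_{0,\partial\Omega}$ by the argument used in (\ref{eq38})--(\ref{eq39}), and the primal factor is supplied by (\ref{eq34}) in the first case and by (\ref{eq29}) in the second case. Substituting and dividing by $\|\boldsymbol{g}\|_{0,\partial\Omega}$ produces the claimed rates $h^{2r}\|\boldsymbol{f}\|_{0,\partial\Omega}$ and $h^{r+t}(\sqrt{2\mu}\|\boldsymbol{w}\|_{1+t}+\sqrt{\lambda}\|\mathrm{div}\boldsymbol{w}\|_{t})$.

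The hard part will be making the bilinear-form bound work when both arguments have only $\boldsymbol{H}^{1+r}$-regularity with $r<\tfrac{1}{2}$: none of the relevant face traces sits in $L^{2}(e)$, so each low-regularity flux has to be paired with a jump lying in $H^{\frac{1}{2}-\xi}(e)$. This pairing is available here because the tangential jumps of the primal error reduce to jumps of the polynomial $\boldsymbol{w_{h}}$ (owing to $[\![\boldsymbol{w}]\!]=0$) and the corresponding jumps of the dual interpolation error are jumps of polynomials as well, so both can be controlled in $H^{\frac{1}{2}-\xi}(e)$ by an inverse estimate on polynomials, mirroring the treatment in Theorem \ref{thm3.7}. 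At every step the $\mu$- and $\lambda$-dependencies must be tracked exactly as in the proofs of Theorems \ref{thm3.6} and \ref{thm3.7} so that the final constants remain independent of the Lam\'{e} parameters, preserving the locking-free character of the estimate.
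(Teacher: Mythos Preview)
Your proposal is correct and follows essentially the same Aubin--Nitsche duality argument as the paper's own proof: introduction of the dual problem with data in $\boldsymbol{L}^{2}(\partial\Omega)$, use of consistency (\ref{eq25}) and Galerkin orthogonality (\ref{eq24}) to reduce to $a_{h}(\boldsymbol{w}-\boldsymbol{w_{h}},\boldsymbol{\phi}-\boldsymbol{\pi}\boldsymbol{\phi})$, treatment of the low-regularity face fluxes via the $H^{\xi-\frac{1}{2}}/H^{\frac{1}{2}-\xi}$ duality and Lemma \ref{lemma3.4} together with the vanishing of $[\![(\boldsymbol{\phi}-\boldsymbol{\pi}\boldsymbol{\phi})\cdot\boldsymbol{n}]\!]$ and $[\![\boldsymbol{w}]\!]$, and finally the combination of the primal bounds (\ref{eq34})/(\ref{eq29}) with the dual interpolation estimates (\ref{eq38})--(\ref{eq40}). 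The only cosmetic difference is that the paper tests against an arbitrary $\boldsymbol{g}\in\boldsymbol{L}^{2}(\partial\Omega)$ and invokes the Riesz representation theorem at the end, whereas you take $\boldsymbol{g}=(\boldsymbol{w}-\boldsymbol{w_{h}})|_{\partial\Omega}$ directly and divide by its norm---these are equivalent.
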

\begin{proof}
Introduce an auxiliary problem: for any given $\boldsymbol{g}\in \boldsymbol{L}^{2}(\partial\Omega)$, find $\boldsymbol{\psi}\in \boldsymbol{H}^{1}(\Omega)$ such that
\begin{align*}
a(\boldsymbol{v},\boldsymbol{\psi})=b(\boldsymbol{v,g}),\quad\forall \boldsymbol{v}\in \boldsymbol{H}^{1}(\Omega).
\end{align*}
Since $a(\cdot,\cdot)$ is symmetric and (\ref{eq22}) is valid, we know that $\boldsymbol{\psi}\in \boldsymbol{H}^{1+\xi}(\Omega),0<\xi<\frac{1}{2}$ and $\xi$ can be close to $\frac{1}{2}$ arbitrarily.
By using (\ref{eq25}), (\ref{eq24}) and the definitions of $a_{h}(\cdot,\cdot)$ and $\|\cdot\|_{\mathrm{dG}}$, we deduce
 \begin{align*}
\nonumber &b_{h}(\boldsymbol{w}-\boldsymbol{w_{h}},\boldsymbol{g})=a_{h}(\boldsymbol{w}-\boldsymbol{w_{h}},\boldsymbol{\psi})
      =a_{h}(\boldsymbol{w}-\boldsymbol{w_{h}},\boldsymbol{\psi}-\boldsymbol{\pi}\boldsymbol{\psi})\\
&\lesssim \|\boldsymbol{w}-\boldsymbol{w_{h}}\|_{\mathrm{dG}}\|\boldsymbol{\psi}-\boldsymbol{\pi}\boldsymbol{\psi}\|_{\mathrm{dG}}\nonumber-2\mu\sum_{e\in \mathcal{E}^{i}_{h}}\int_{e}\{\boldsymbol{\varepsilon}(\boldsymbol{\psi}-\boldsymbol{\pi}\boldsymbol{\psi})\boldsymbol{n}\}\cdot[\![\boldsymbol{w}-\boldsymbol{w_{h}}]\!]ds\\
&\quad-\lambda\sum_{e\in \mathcal{E}^{i}_{h}}\int_{e}\{\mathrm{div}(\boldsymbol{\psi}-\boldsymbol{\pi}\boldsymbol{\psi})\}[\![(\boldsymbol{w-w_{h}})\cdot \boldsymbol{n}]\!]ds.
\end{align*}

\indent When $\boldsymbol{w}\in \boldsymbol{H}^{1+r}(\Omega)$~$(0<r<\frac{1}{2})$,
 noticing that $[\![\boldsymbol{w}]\!]=0$ on inner face,
then using 
the inverse inequality, Cauchy-Schwarz inequality and (\ref{eq34}),
we deduce that

\begin{align}\label{eq44}
\nonumber &b_{h}(\boldsymbol{w}-\boldsymbol{w_{h}},\boldsymbol{g})
\lesssim  \|\boldsymbol{w}-\boldsymbol{w_{h}}\|_{\mathrm{dG}}\|\boldsymbol{\psi}-\boldsymbol{\pi}\boldsymbol{\psi}\|_{\mathrm{dG}}\\
\nonumber&\quad
  -\sum_{e\in \mathcal{E}^{i}_{h}}\int_{e}\{
  (2\mu \boldsymbol{\varepsilon}(\boldsymbol{\psi}-\boldsymbol{\pi}\boldsymbol{\psi})
  +\lambda \mathrm{div}(\boldsymbol{\psi}-\boldsymbol{\pi}\boldsymbol{\psi})\boldsymbol{I})\boldsymbol{n}\}\cdot[\![\boldsymbol{w-w_{h}}]\!]ds
\\
\nonumber
&\lesssim \|\boldsymbol{w}-\boldsymbol{w_{h}}\|_{\mathrm{dG}}\|\boldsymbol{\psi}-\boldsymbol{\pi}\boldsymbol{\psi}\|_{\mathrm{dG}}
+\sum_{e\in \mathcal{E}^{i}_{h}}
\|(2\mu \boldsymbol{\varepsilon}(\boldsymbol{\psi}-\boldsymbol{\pi}\boldsymbol{\psi})
 +\lambda \mathrm{div}(\boldsymbol{\psi}-\boldsymbol{\pi}\boldsymbol{\psi})\boldsymbol{I})\boldsymbol{n}\|_{\xi-\frac{1}{2},e}
\|[\![\boldsymbol{w-w_{h}}]\!]\|_{\frac{1}{2}-\xi,e}
\nonumber\\
&\lesssim\|\boldsymbol{w}-\boldsymbol{w_{h}}\|_{\mathrm{dG}}\|\boldsymbol{\psi}-\boldsymbol{\pi}\boldsymbol{\psi}\|_{\mathrm{dG}}\nonumber\\
&\quad+\sum_{e\in \mathcal{E}^{i}_{h}}\| (2\mu\boldsymbol{\varepsilon}(\boldsymbol{\psi}-\boldsymbol{\pi}\boldsymbol{\psi})
+\lambda \mathrm{div}(\boldsymbol{\psi}-\boldsymbol{\pi}\boldsymbol{\psi} )\boldsymbol{I})\boldsymbol{n}\|_{\xi-\frac{1}{2}, e} h_{e}^{\xi-\frac{1}{2}}\|[\![\boldsymbol{w}-\boldsymbol{w_{h}}]\!]\|_{0, e}\nonumber\\
&\lesssim \|\boldsymbol{w}-\boldsymbol{w_{h}}\|_{\mathrm{dG}}\|\boldsymbol{\psi}-\boldsymbol{\pi}\boldsymbol{\psi}\|_{\mathrm{dG}}\nonumber\\
&\quad+\left(
\sum_{e\in \mathcal{E}^{i}_{h}}h^{2\xi}_{e}\|(2\mu\boldsymbol{\varepsilon}(\boldsymbol{\psi}-\boldsymbol{\pi}\boldsymbol{\psi})+\lambda \mathrm{div}(\boldsymbol{\psi}-\boldsymbol{\pi}\boldsymbol{\psi}) \boldsymbol{I})\boldsymbol{n}\|^{2}_{\xi-\frac{1}{2}, e}
  \right)^{\frac{1}{2}}
  \left(
  \sum_{e\in \mathcal{E}^{i}_{h}}h^{-1}_{e}\|[\![\boldsymbol{w}-\boldsymbol{w_{h}}]\!]\|^{2}_{0, e}
  \right)^{\frac{1}{2}}\nonumber\\
&\lesssim \|\boldsymbol{w}-\boldsymbol{w_{h}}\|_{\mathrm{dG}} \left(\|\boldsymbol{\psi}-\boldsymbol{\pi}\boldsymbol{\psi}\|_{\mathrm{dG}}
  +(\sum_{e\in \mathcal{E}^{i}_{h}}h^{2\xi}_{e}\|(2\mu\boldsymbol{\varepsilon}(\boldsymbol{\psi}-\boldsymbol{\pi}\boldsymbol{\psi})
  +\lambda \mathrm{div}(\boldsymbol{\psi}-\boldsymbol{\pi}\boldsymbol{\psi}) \boldsymbol{I})\boldsymbol{n}\|^{2}_{\xi-\frac{1}{2}, e})^{\frac{1}{2}}\right)\nonumber\\
   &\lesssim h^{r}\|\boldsymbol{f}\|_{0,\partial\Omega}\left(\|\boldsymbol{\psi}-\boldsymbol{\pi}\boldsymbol{\psi}\|_{\mathrm{dG}}
   +(\sum_{e\in \mathcal{E}^{i}_{h}}h^{2\xi}_{e}\|(2\mu\boldsymbol{\varepsilon}(\boldsymbol{\psi}-\boldsymbol{\pi}\boldsymbol{\psi})+\lambda \mathrm{div}(\boldsymbol{\psi}-\boldsymbol{\pi}\boldsymbol{\psi}) \boldsymbol{I})\boldsymbol{n}\|^{2}_{\xi-\frac{1}{2}, e})^{\frac{1}{2}}\right).
 \end{align}
 Using (\ref{eq26}) with $\xi= r+\frac{1}{2}(\frac{1}{2}-r)\doteq r+\delta$, then by the same proof method of (\ref{eq40}) we can derive
\begin{align}\label{eq45}
\nonumber&\left( \sum_{e\in \mathcal{E}^{i}_{h}}h_{e}^{2\xi}\| (2\mu\boldsymbol{\varepsilon}(\boldsymbol{\psi}-\boldsymbol{\pi}\boldsymbol{\psi})
  +\lambda \mathrm{div}(\boldsymbol{\psi}-\boldsymbol{\pi}\boldsymbol{\psi})\boldsymbol{I}) \boldsymbol{n}\|^{2}_{\xi-\frac{1}{2}, e}   \right)^{\frac{1}{2}}\\
&\lesssim h^{\xi-\delta}\left(
  2\mu \|\boldsymbol{\psi} \|_{1+\xi}+\lambda \|\mathrm{div} \boldsymbol{\psi}\|_{\xi}
 \right)
\lesssim h^{r}\|\boldsymbol{g} \|_{0,\partial\Omega}.
\end{align}
For the solution $\boldsymbol{\psi}$ of the auxiliary problem, using a similar proof technique of (\ref{eq38}) we can prove
\begin{align}\label{eq46}
 \|\boldsymbol{\psi}-\boldsymbol{\pi}\boldsymbol{\psi}\|_{\mathrm{dG}}
 \lesssim
h^{\xi}\|\boldsymbol{g} \|_{0,\partial\Omega}.
\end{align}

\noindent Substituting (\ref{eq45}) and (\ref{eq46}) into (\ref{eq44}), and using the Riesz representation theorem, we get (\ref{eq42}).\\

\indent When $\boldsymbol{w}\in \boldsymbol{H}^{1+t}(\Omega)$ $(\frac{1}{2}<t\leq k)$, similar to the above analysis,  using the fact that $[\![(\boldsymbol{\psi}-\boldsymbol{\pi}\boldsymbol{\psi})\cdot \mathbf{n}]\!]=0$ on inner face $e$, Cauchy-Schwarz inequality, the definitions of $\|\cdot\|_{\mathrm{dG}}$ and $\|\cdot\|_{h}$, (\ref{eq26}) with $\xi=r+\frac{1}{2}(\frac{1}{2}-r)\doteq r+\delta$, the inverse inequality, (\ref{eq29}), (\ref{eq39}) and (\ref{eq40}), we have
\begin{align*}
\nonumber &b_{h}(\boldsymbol{w}-\boldsymbol{w_{h}},\boldsymbol{g}) =a_{h}(\boldsymbol{w}-\boldsymbol{w_{h}},\boldsymbol{\psi}-\boldsymbol{\pi}\boldsymbol{\psi})\\
&\lesssim  \|\boldsymbol{w}-\boldsymbol{w_{h}}\|_{\mathrm{dG}}\|\boldsymbol{\psi}-\boldsymbol{\pi}\boldsymbol{\psi}\|_{\mathrm{dG}}
-\sum_{e\in \mathcal{E}^{i}_{h}}\int_{e}
    \{( 2\mu\boldsymbol{\varepsilon}(\boldsymbol{\psi}-\boldsymbol{\pi}\boldsymbol{\psi})
  +\lambda\mathrm{div}(\boldsymbol{\psi}-\boldsymbol{\pi}\boldsymbol{\psi})\boldsymbol{I})\boldsymbol{n}\}\cdot[\![\boldsymbol{w-w_{h}}]\!]ds\\
&\lesssim \|\boldsymbol{w}-\boldsymbol{w_{h}}\|_{\mathrm{dG}}\|\boldsymbol{\psi}-\boldsymbol{\pi}\boldsymbol{\psi}\|_{\mathrm{dG}}\nonumber\\
&\quad +\sum_{e\in \mathcal{E}^{i}_{h}}
      \|(2\mu \boldsymbol{\varepsilon}(\boldsymbol{\psi}-\boldsymbol{\pi}\boldsymbol{\psi})+\lambda \mathrm{div}(\boldsymbol{\psi}-\boldsymbol{\pi}\boldsymbol{\psi})\boldsymbol{I})\boldsymbol{n}\|_{\xi-\frac{1}{2},e}
      \|[\![\boldsymbol{w-w_{h}}]\!]\|_{\frac{1}{2}-\xi,e}
      \\
&\lesssim\|\boldsymbol{w}-\boldsymbol{w_{h}}\|_{\mathrm{dG}}\|\boldsymbol{\psi}-\boldsymbol{\pi}\boldsymbol{\psi}\|_{\mathrm{dG}}\nonumber\\
&\quad+\sum_{e\in \mathcal{E}^{i}_{h}}\| (2\mu\boldsymbol{\varepsilon}(\boldsymbol{\psi}-\boldsymbol{\pi}\boldsymbol{\psi})
+\lambda \mathrm{div}(\boldsymbol{\psi}-\boldsymbol{\pi}\boldsymbol{\psi} )\boldsymbol{I})\boldsymbol{n}\|_{\xi-\frac{1}{2}, e} h_{e}^{\xi-\frac{1}{2}}\|[\![\boldsymbol{w}-\boldsymbol{w_{h}}]\!]\|_{0, e}\nonumber\\
&\lesssim \|\boldsymbol{w}-\boldsymbol{w_{h}}\|_{\mathrm{dG}}\|\boldsymbol{\psi}-\boldsymbol{\pi}\boldsymbol{\psi}\|_{\mathrm{dG}}\nonumber\\
&\quad+\left(
\sum_{e\in \mathcal{E}^{i}_{h}}h^{2\xi}_{e}\|(2\mu\boldsymbol{\varepsilon}(\boldsymbol{\psi}-\boldsymbol{\pi}\boldsymbol{\psi})+\lambda \mathrm{div}(\boldsymbol{\psi}-\boldsymbol{\pi}\boldsymbol{\psi}) \boldsymbol{I})\boldsymbol{n}\|^{2}_{\xi-\frac{1}{2}, e}
  \right)^{\frac{1}{2}}
  \left(
  \sum_{e\in \mathcal{E}^{i}_{h}}h^{-1}_{e}\|[\![\boldsymbol{w}-\boldsymbol{w_{h}}]\!]\|^{2}_{0, e}
  \right)^{\frac{1}{2}}\nonumber\\
&\lesssim \|\boldsymbol{w}-\boldsymbol{w_{h}}\|_{h} \left(\|\boldsymbol{\psi}-\boldsymbol{\pi}\boldsymbol{\psi}\|_{\mathrm{dG}}
  +(\sum_{e\in \mathcal{E}^{i}_{h}}h^{2\xi}_{e}\|(2\mu\boldsymbol{\varepsilon}(\boldsymbol{\psi}-\boldsymbol{\pi}\boldsymbol{\psi})
  +\lambda \mathrm{div}(\boldsymbol{\psi}-\boldsymbol{\pi}\boldsymbol{\psi}) \boldsymbol{I})\boldsymbol{n}\|^{2}_{\xi-\frac{1}{2}, e})^{\frac{1}{2}}\right)\nonumber\\
   &\lesssim h^{t}\left(\sqrt{2\mu}\|\boldsymbol{w}\|_{1+t}+\sqrt{\lambda}\|\mathrm{div} \boldsymbol{w}\|_{t}\right)
   \left(h^{\xi}\|\boldsymbol{g}\|_{0,\partial\Omega}
   +h^{r}\|\boldsymbol{g}\|_{0,\partial\Omega}\right)\\
&\lesssim h^{r+t}\left(\sqrt{2\mu}\|\boldsymbol{w}\|_{1+t}+\sqrt{\lambda}\|\mathrm{div} \boldsymbol{w}\|_{t}\right)
\|\boldsymbol{g}\|_{0,\partial\Omega},
\end{align*}
which together with the Riesz representation theorem yields (\ref{eq43}).
\end{proof}

Define the norm of operator $T$ as follows:
\begin{align*}
&\|T\|_{0,\partial\Omega}
 =\mathop{sup}\limits_{0\neq \boldsymbol{f}\in \boldsymbol{L}^{2}(\partial\Omega)}\frac{\|T\boldsymbol{f}\|_{0,\partial\Omega}}{\|\boldsymbol{f}\|_{0,\partial\Omega}}.
\end{align*}

\begin{lemma}
The operator $T$ converges to $T_{h}$ in terms of
 \begin{equation}\label{eq47}
  \|T_{h}-T\|_{0,\partial\Omega}\rightarrow 0~as~h\rightarrow 0.
 \end{equation}
\end{lemma}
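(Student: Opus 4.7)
The plan is to obtain the operator norm convergence as an immediate corollary of Theorem \ref{thm3.8}. For any $\boldsymbol{f}\in \boldsymbol{L}^{2}(\partial\Omega)$ with $\boldsymbol{f}\neq \boldsymbol{0}$, put $\boldsymbol{w}=A\boldsymbol{f}$ and $\boldsymbol{w_{h}}=A_{h}\boldsymbol{f}$, so that by construction $T\boldsymbol{f}=\boldsymbol{w}|_{\partial\Omega}$ and $T_{h}\boldsymbol{f}=\boldsymbol{w_{h}}|_{\partial\Omega}$. Then $(T-T_{h})\boldsymbol{f}$ equals the trace of $\boldsymbol{w}-\boldsymbol{w_{h}}$ on $\partial\Omega$, so
\begin{align*}
\|(T-T_{h})\boldsymbol{f}\|_{0,\partial\Omega} = \|\boldsymbol{w}-\boldsymbol{w_{h}}\|_{0,\partial\Omega}.
\end{align*}

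Next, I would invoke the low-regularity pickup provided by Remark 3.2: for $\boldsymbol{f}\in \boldsymbol{L}^{2}(\partial\Omega)$ the solution satisfies $\boldsymbol{w}\in \boldsymbol{H}^{1+r}(\Omega)$ for any $r<\tfrac{1}{2}$, together with the $\lambda$-uniform bound (\ref{eq22}). This is exactly the hypothesis needed to apply the first half of Theorem \ref{thm3.8}, yielding the key estimate
\begin{align*}
\|\boldsymbol{w}-\boldsymbol{w_{h}}\|_{0,\partial\Omega}\lesssim h^{2r}\|\boldsymbol{f}\|_{0,\partial\Omega},
\end{align*}
with a constant independent of $\boldsymbol{f}$, $h$, $\mu$ and $\lambda$.

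Combining the two displays, dividing by $\|\boldsymbol{f}\|_{0,\partial\Omega}$ and taking the supremum over $\boldsymbol{0}\neq \boldsymbol{f}\in \boldsymbol{L}^{2}(\partial\Omega)$ yields
\begin{align*}
\|T-T_{h}\|_{0,\partial\Omega}\lesssim h^{2r}\longrightarrow 0\quad\text{as } h\to 0,
\end{align*}
which is the claim. Since Theorem \ref{thm3.8} has already been established, there is essentially no further obstacle; the only point requiring care is to verify that the estimate (\ref{eq42}) is genuinely uniform in $\boldsymbol{f}$ (the constant absorbed in $\lesssim$ depends only on $\Omega$, the shape-regularity of $\mathcal{T}_{h}$ and the stabilization parameters), which is indeed the case because the constant $C_{R}$ in (\ref{eq22}) and all constants in the proof of Theorem \ref{thm3.8} are independent of the particular choice of data.
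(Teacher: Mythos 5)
Your proposal is correct and follows essentially the same route as the paper: both reduce (\ref{eq47}) to the low-regularity estimate (\ref{eq42}) of Theorem \ref{thm3.8} applied with $\boldsymbol{w}=A\boldsymbol{f}$, $\boldsymbol{w_{h}}=A_{h}\boldsymbol{f}$, and then take the supremum over $\boldsymbol{0}\neq\boldsymbol{f}\in\boldsymbol{L}^{2}(\partial\Omega)$ to get $\|T-T_{h}\|_{0,\partial\Omega}\lesssim h^{2r}\to 0$. The only item the paper adds beyond your argument is the closing observation that, $T_{h}$ being of finite rank, the norm convergence implies $T$ is compact.
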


\begin{proof}
 Together with the operators standard arguments and (\ref{eq42}), we deduce that
\begin{align*}
 &\|T_{h}-T\|_{0,\partial\Omega}
 =\mathop{sup}\limits_{0\neq \boldsymbol{f}\in \boldsymbol{L}^{2}(\partial\Omega)}\frac{\|T_{h}\boldsymbol{f}-T\boldsymbol{f}\|_{0,\partial\Omega}}{\|\boldsymbol{f}\|_{0,\partial\Omega}}
 =\mathop{sup}\limits_{0\neq \boldsymbol{f}\in \boldsymbol{L}^{2}(\partial\Omega)}\frac{\|A_{h}\boldsymbol{f}-A\boldsymbol{f}\|_{0,\partial\Omega }}{\|\boldsymbol{f}\|_{0,\partial\Omega}}\\
&\leq\mathop{sup}\limits_{0\neq \boldsymbol{f}\in \boldsymbol{L}^{2}(\partial\Omega)}\frac{Ch^{2r}\|\boldsymbol{f}\|_{0,\partial\Omega }}{\|\boldsymbol{f}\|_{0,\partial\Omega}}\leq Ch^{2r}\rightarrow0~(h\rightarrow0),
\end{align*}
i.e., (\ref{eq47}) holds. Since $T_{h}$ is a finite rank operator, $T$ is a compact operator.
 The proof is completed.
\end{proof}

\indent  Let $M(\rho)$ be the space of  eigenvectors of (\ref{eq2}) associated with eigenvalue $\rho$, and $M_{h}(\rho)$ be the direct sum of the generalized eigenspace of (\ref{eq3}) associated with $\rho_{h}$ that converge to $\rho$. Denote
\begin{align*}
\|(T-T_{h}) \mid _{M(\rho)}\|_{0,\partial\Omega}
 =\mathop{sup}_{\boldsymbol{f}\in M(\rho),\|\boldsymbol{f}\|_{0,\partial\Omega}=1}\|T\boldsymbol{f}-T_{h}\boldsymbol{f}\|_{0,\partial\Omega}.
\end{align*}

Thanks to \cite{Babuska1991}, we can prove the following theorem.

\begin{theorem}\label{thm3.10}
Suppose that the regularity estimate (\ref{eq22}) is valid and assume $M(\rho)\subset \boldsymbol{H}^{1+s}(\Omega)$
($ s\geq1 $),
 $t=min\{k,s\}$, then
\begin{align}\label{eq48}
\mid \rho_{h} -\rho  \mid\lesssim h^{2t}.
\end{align}
Let $\boldsymbol{u_{h}}\in M_{h}(\rho)$ be the eigenfunction of (\ref{eq3}), then there exists $\boldsymbol{u}\in M(\rho)$
such that
\begin{align}
\label{eq49}& \|\boldsymbol{u}-\boldsymbol{u_{h}}\|_{0,\partial\Omega}\lesssim h^{r+t},\\
\label{eq50}&  \|\boldsymbol{u}-\boldsymbol{u_{h}}\|_{h}\lesssim h^{t},
\end{align}
where $r<\frac{1}{2}$ and $r$ can be close to $\frac{1}{2}$ arbitrarily.
\end{theorem}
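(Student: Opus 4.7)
The plan is to invoke the Babu\v{s}ka-Osborn spectral approximation theory, using the norm convergence $\|T_h - T\|_{0,\partial\Omega} \to 0$ already established, together with the source problem estimates from Theorems~\ref{thm3.6} and~\ref{thm3.8} to convert abstract operator bounds into the concrete rates claimed in (\ref{eq48})--(\ref{eq50}). Since $a_h(\cdot,\cdot)$ and $b_h(\cdot,\cdot)$ are symmetric, $T$ and $T_h$ are self-adjoint on $L^2(\partial\Omega)$ (with respect to $b(\cdot,\cdot)$), so the spectral approximation theory in self-adjoint form applies.

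First I would derive the $L^2(\partial\Omega)$ eigenfunction estimate (\ref{eq49}). By standard spectral theory, any $\boldsymbol{u_h}\in M_h(\rho)$ admits a choice of $\boldsymbol{u}\in M(\rho)$ with
\[
\|\boldsymbol{u}-\boldsymbol{u_h}\|_{0,\partial\Omega}\lesssim \|(T-T_h)|_{M(\rho)}\|_{0,\partial\Omega}.
\]
Any $\boldsymbol{f}\in M(\rho)$ satisfies $\boldsymbol{f}\in \boldsymbol{H}^{1+s}(\Omega)$, so taking $\boldsymbol{w}=A\boldsymbol{f}\in M(\rho)\subset \boldsymbol{H}^{1+s}(\Omega)$, Theorem~\ref{thm3.8} (\ref{eq43}) applied to each such $\boldsymbol{f}$ bounds the right-hand side by $h^{r+t}$, which yields (\ref{eq49}).

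Next I would obtain (\ref{eq50}) by relating the eigenpair to the source problem. The eigenfunction satisfies $\boldsymbol{u}=\rho A\boldsymbol{u}$, so $\boldsymbol{u}$ is the exact solution of the source problem with data $\rho\boldsymbol{u}\in \boldsymbol{H}^{1+s}(\Omega)|_{\partial\Omega}$. Setting $\widetilde{\boldsymbol{u}}_h:=\rho A_h\boldsymbol{u}$ (the discrete source solution with the same data), Theorem~\ref{thm3.6} gives $\|\boldsymbol{u}-\widetilde{\boldsymbol{u}}_h\|_h\lesssim h^t$. It remains to control $\|\widetilde{\boldsymbol{u}}_h-\boldsymbol{u_h}\|_h=\|\rho A_h\boldsymbol{u}-\rho_h A_h\boldsymbol{u_h}\|_h$. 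By the continuity bound (\ref{eq41}) and the already proved (\ref{eq48})--(\ref{eq49}), this difference is of lower order $h^{r+t}$, so the triangle inequality gives (\ref{eq50}).

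Finally, for the eigenvalue bound (\ref{eq48}) I would use the classical symmetric-form identity: normalizing so that $b_h(\boldsymbol{u_h},\boldsymbol{u_h})=1$ and using the consistency (\ref{eq25}) with $\boldsymbol{u}\in \boldsymbol{H}^{1+t}(\Omega)\subset \boldsymbol{H}^{1+r}(\mathcal{T}_h)$,
\[
\rho_h-\rho=\frac{a_h(\boldsymbol{u}-\boldsymbol{u_h},\boldsymbol{u}-\boldsymbol{u_h})-\rho\, b_h(\boldsymbol{u}-\boldsymbol{u_h},\boldsymbol{u}-\boldsymbol{u_h})}{b_h(\boldsymbol{u_h},\boldsymbol{u_h})}.
\]
Continuity (\ref{eq10}) bounds the $a_h$ term by $\|\boldsymbol{u}-\boldsymbol{u_h}\|_h^2$, while the $b_h$ term is $\|\boldsymbol{u}-\boldsymbol{u_h}\|_{0,\partial\Omega}^2$. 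Plugging in (\ref{eq49})--(\ref{eq50}) yields $|\rho_h-\rho|\lesssim h^{2t}+h^{2(r+t)}\lesssim h^{2t}$. The main obstacle I anticipate is the bookkeeping step needed to justify the symmetric identity in the DG setting, since $\boldsymbol{u}\notin \boldsymbol{S}^h$ but lies in the broken space where consistency holds only for $r>\tfrac12$; this is precisely why we need the hypothesis $s\geq 1$ (so $t\geq 1>\tfrac12$) to apply the identity, and also why all constants must be tracked to stay independent of $\lambda$, using the locking-free form of Theorems~\ref{thm3.6}--\ref{thm3.8}.
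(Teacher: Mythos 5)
Your treatment of (\ref{eq49}) and (\ref{eq50}) matches the paper's: (\ref{eq49}) comes from the Babu\v{s}ka--Osborn bound $\|\boldsymbol{u}-\boldsymbol{u_h}\|_{0,\partial\Omega}\lesssim\|(T-T_h)\mid_{M(\rho)}\|_{0,\partial\Omega}$ combined with (\ref{eq43}), and (\ref{eq50}) from splitting $\rho_h A_h\boldsymbol{u_h}-\rho A\boldsymbol{u}$ through $\rho A_h\boldsymbol{u}$ and using (\ref{eq41}) and (\ref{eq29}). However, as written your argument is circular: your proof of (\ref{eq50}) invokes ``the already proved (\ref{eq48})'' to control $\|\rho A_h\boldsymbol{u}-\rho_h A_h\boldsymbol{u_h}\|_h\lesssim|\rho-\rho_h|\,\|\boldsymbol{u}\|_{0,\partial\Omega}+\rho_h\|\boldsymbol{u}-\boldsymbol{u_h}\|_{0,\partial\Omega}$, yet you only establish (\ref{eq48}) afterwards, and your proof of (\ref{eq48}) via the symmetric quotient identity requires (\ref{eq50}) to bound $a_h(\boldsymbol{u}-\boldsymbol{u_h},\boldsymbol{u}-\boldsymbol{u_h})\lesssim\|\boldsymbol{u}-\boldsymbol{u_h}\|_h^2$. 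Neither statement is proved before the other is used, so the chain does not close.

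The gap is repairable in two ways. The paper's route proves (\ref{eq48}) first and independently of (\ref{eq50}): Theorem 7.3 of Babu\v{s}ka--Osborn gives $|\rho-\rho_h|\lesssim\sum_{i,j}|b_h((T-T_h)\boldsymbol{\varphi}_i,\boldsymbol{\varphi}_j)|+\|(T-T_h)\mid_{M(\rho)}\|^2_{0,\partial\Omega}$, and the double sum is rewritten via consistency as $a_h(A\boldsymbol{\varphi}_i-A_h\boldsymbol{\varphi}_i,A\boldsymbol{\varphi}_j-A_h\boldsymbol{\varphi}_j)\lesssim\|A\boldsymbol{\varphi}_i-A_h\boldsymbol{\varphi}_i\|_h\|A\boldsymbol{\varphi}_j-A_h\boldsymbol{\varphi}_j\|_h\lesssim h^{2t}$, which needs only the \emph{source-problem} estimate (\ref{eq29}) applied to $A\boldsymbol{\varphi}_i=\rho^{-1}\boldsymbol{\varphi}_i$, not the eigenfunction estimate (\ref{eq50}). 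Alternatively, you could keep your symmetric-identity argument for (\ref{eq48}) if you first insert an intermediate step: the crude bound $|\rho-\rho_h|\lesssim\|(T-T_h)\mid_{M(\rho)}\|_{0,\partial\Omega}\lesssim h^{r+t}$ (which follows from the same spectral theory used for (\ref{eq49})) suffices to prove (\ref{eq50}), and then the quotient identity bootstraps the eigenvalue rate to $h^{2t}$. Either fix works; the ordering as you wrote it does not.
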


\begin{proof}
Let $\rho$  and  $\rho_{h}$ be the $mth$ eigenvalues of (\ref{eq2}) and (\ref{eq3}), respectively, and dim$M(\rho)=q$. From Theorem 7.3 in \cite{Babuska1991} we get
 \begin{align}\label{eq51}
\mid \rho-\rho_{h} \mid \lesssim \sum^{m+q-1}_{i,j=m} \mid b_{h}((T-T_{h})\boldsymbol{\varphi}_{i},\boldsymbol{\varphi}_{j})\mid+\|(T-T_{h}) \mid _{M(\rho)}\|^{2}_{0,\partial\Omega},
 \end{align}
 where $\boldsymbol{\varphi}_{m},\cdots,\boldsymbol{\varphi}_{m+q-1}$ are the basis functions for $M(\rho)$. Then, from (\ref{eq25}), (\ref{eq24}), (\ref{eq10}) and (\ref{eq29}), we derive
 \begin{align}\label{eq52}
 \nonumber & b_{h}((T-T_{h})\boldsymbol{\varphi}_{i},\boldsymbol{\varphi}_{j})= b_{h}((A-A_{h})\boldsymbol{\varphi}_{i},\boldsymbol{\varphi}_{j})=a_{h}(A\boldsymbol{\varphi}_{i}-A_{h}\boldsymbol{\varphi}_{i},A\boldsymbol{\varphi}_{j})\\
  &=a_{h}(A\boldsymbol{\varphi}_{i}-A_{h}\boldsymbol{\varphi}_{i},A\boldsymbol{\varphi}_{j}-A_{h}\boldsymbol{\varphi}_{j})
\lesssim \|A\boldsymbol{\varphi}_{i}-A_{h}\boldsymbol{\varphi}_{i}\|_{h}\|A\boldsymbol{\varphi}_{j}-A_{h}\boldsymbol{\varphi}_{j}\|_{h}
 \lesssim h^{2t}.
 \end{align}

Noting that $A\boldsymbol{f}=\boldsymbol{w}$, $A_{h}\boldsymbol{f}=\boldsymbol{w_{h}}$ and  using (\ref{eq43}), we have
\begin{align}\label{eq53}
\nonumber &\|(T-T_{h}) \mid _{M(\rho)}\|_{0,\partial\Omega}
 =\mathop{sup}_{\boldsymbol{f}\in M(\rho),\|\boldsymbol{f}\|_{0,\partial\Omega}=1}\|T\boldsymbol{f}-T_{h}\boldsymbol{f}\|_{0,\partial\Omega}\\
&\lesssim\mathop{sup}_{\boldsymbol{f}\in M(\rho),\|\boldsymbol{f}\|_{0,\partial\Omega}=1}h^{r+t}\|\boldsymbol{f}\|_{0,\partial\Omega}
\lesssim h^{r+t}
\end{align}
Substituting (\ref{eq52}) and (\ref{eq53}) into (\ref{eq51}), we get (\ref{eq48}).

Since $ \|T_{h}-T\|_{0,\partial\Omega}\rightarrow 0$, from the spectral approximation theory (see Theorems 7.3 and 7.4 in \cite{Babuska1991}) we know that there exists $\boldsymbol{u}\in M(\rho)$ such that
\begin{align}\label{eq54}
\|\boldsymbol{u}-\boldsymbol{u_{h}}\|_{0,\partial\Omega}\leq C\|(T-T_{h}) \mid _{M(\rho)}\|_{0,\partial\Omega}.
\end{align}
Then (\ref{eq49}) follows directly from (\ref{eq54}) and (\ref{eq53}).

Since $\boldsymbol{u}=\rho A\boldsymbol{u}$ and $\boldsymbol{u_{h}}=\rho_{h} A_{h}\boldsymbol{u_{h}}$, using the triangular inequality,
(\ref{eq41}), (\ref{eq48}), (\ref{eq49}) and (\ref{eq29}), we deduce
 \begin{align*}
&\|\boldsymbol{u_{h}}-\boldsymbol{u}\|_{h}=\|\rho_{h} A_{h}\boldsymbol{u_{h}}-\rho A\boldsymbol{u}\|_{h}
             \lesssim\|\rho_{h} A_{h}\boldsymbol{u_{h}}-\rho A_{h}\boldsymbol{u}\|_{h}+\|\rho A_{h}\boldsymbol{u}-\rho A\boldsymbol{u}\|_{h}\\
&  \lesssim \|\rho_{h} \boldsymbol{u_{h}}-\rho \boldsymbol{u}\|_{0,\partial\Omega}+\|A_{h}\boldsymbol{u}-A\boldsymbol{u}\|_{h}\\
&  \lesssim  \mid \rho_{h} -\rho \mid + \|\boldsymbol{u_{h}}-\boldsymbol{u}\|_{0,\partial\Omega}+\|A_{h}\boldsymbol{u}-A\boldsymbol{u}\|_{h}
\lesssim h^{2t} + h^{r+t}+h^{t}
\lesssim h^{t},
 \end{align*}
i.e., (\ref{eq50}) is valid.
\end{proof}

\section{Numerical experiments}

In this section, we will report some numerical experiments about the approximations of the first 7 non-zero eigenvalues of~(\ref{eq2}) on four domains: the unit square $\Omega_S:=(0,1)^{2}$, the unit disk $\Omega_D$  with center at 0 and radius 1, the L-shaped domain $\Omega_L:=(-1,1)^{2}\backslash[0,1)^{2}$, and the unit cube  $\Omega_C:=(0,1)^{3}$. We take $\mu=1$ and take different Lam\'{e} parameters $\lambda\in\{1,10,10^{2},10^{3},10^{4},10^{5},10^{6}\}$ on each domain to observe the effect of $\lambda$, and set the penalty parameter $\gamma_{\mu}=\gamma_{\lambda}=10, 40, 90$. We solve the problem in Python on a computer  with 2.10GHZ CPU and 64GB RAM. The numerical results are  created using scikit-fem \cite{Gustafsson2020} which relies heavily on NumPy \cite{Harris2020} and SciPy \cite{Virtanen2020}.

\indent In the experiments we use the linear DG element (P1), the quadratic DG element (P2) and the cubic  DG element (P3) to compute the approximate eigenvalues on a sequence of shape-regular meshes, and we take the results calculated by P3 elements on the smallest meshes as possible as we can implement as the reference values.

\indent Figs. 1-2 show the convergence orders of the eigenvalues obtained by P2 elements with $\lambda=1$ and $\lambda=10000$ on four domains, respectively. It is well-known that the convergence order of approximate eigenvalues depends on the smoothness of eigenfunctions and the degree of polynomials that we used (see (\ref{eq46})). When the eigenfunction is smooth enough or belongs to $\boldsymbol{H}^{1+k}(\Omega)$, the corresponding approximate eigenvalues calculated on uniform meshes can achieve the optimal convergence order $O(dof^{-\frac{2k}{d}})$ (see \cite{Ciarlet1978}), at which point the error curve is parallel to the line with slope $-\frac{2k}{d}$. From the top-left corner of Fig. 1 we can see that the error curves of the 4th and the 7th approximate eigenvalue on $\Omega_{S}$ are parallel to the line with slope -2 when P2 elements are used, which indicates that the 4th and the 7th eigenvalue on $\Omega_{S}$ reach the optimal convergence order, while the convergence orders of the other five eigenvalues are not optimal. We think this is because different eigenfunctions have different smoothness in general (see, e.g., pages 736-737 in \cite{Babuska1991}), thus different eigenvalues have different convergence rates.

It can also be observed from the bottom-left corner of Fig.1 that when using P2 element, most of the error curves are parallel to the line whose slope is -1 but not -2, which implies that on $\Omega_{L}$  the eigenfunctions are less smooth and the eigenvalues cannot reach the optimal convergence order.

From the top-right corner of Fig. 1 we find that the convergence order of the first 7 eigenvalues on $\Omega_D$ only reaches -1 but not -2 when using P2 element. The reason is that in computation the substitution of straight-sided triangle for the curved edge of the disk will cause errors and  result in the loss of convergence order.

From  Fig. 2, when $\lambda=10^{4}$, we can see that the first 7 approximate eigenvalues all converge and their error curves are almost parallel to the line with slope -1, which means they all reach the convergence rate of order 1. In the case of $\lambda=10^{6}$, the
first 7 non-zero eigenvalues still converge, and due to the space limitation we do not present the error curves.

Figs. 3-4 show the influence of Lam\'{e} parameter $\lambda$ for the first eigenvalue when using P1, P2 and P3 elements in different domains. It can be seen from Figs. 3-4 that when $\lambda$ increases the error curves coincide almost exactly, which means the change of $\lambda$ has no effect on the eigenvalues, in other word, the SIPG method is robust with respect to nearly incompressible materials. Both theoretical analysis and numerical experiments show that the SIPG method is an efficient and robust approach to solve the Steklov-Lam\'{e} eigenproblem in linear elasticity.

In addition, we also solve this problem by using conforming finite element methods (conforming FEM) and the results are shown in Figs. 5-6.
From Fig. 5 we can see that when computing the approximations on the convex domain $\Omega_S$ and the non-convex domain  $\Omega_L$, the error curves by conforming FEM and DGFEM are almost parallel. In the left column of Fig. 5, the error curves are all parallel to the line with slope -1 while those in the right column are parallel to the line with slope greater than -1. It indicates that for Steklov-Lam\'{e} eigenproblem, the eigenfunctions' smoothness is inherited by the problem configuration and independent of computing method.
We also notice that the results by using conforming P2 elements are locking-free. The results obtained by conforming P1 elements on different meshes have different locking properties. Usually volumetric locking causes artificial stiffening which we think will also increase the frequency. It often happens for low order polynomial and high values of parameter $\lambda$. In our experiments, for large degrees of freedom (around 250,000 or more DOFs ), the conforming method is always locking-free although theoretically we cannot prove this yet. However, an interesting thing in numerical experiments is that different meshes lead to different locking properties when using P1 conforming element. When the degrees of freedom is less, the bisect mesh can be used to obtain the results of locking-free, but on uniform refined mesh the results are locked. Whether the analysis in this paper can be extended to general polygonal and polyhedral meshes and locking-free performance on general grids are the goal of our future research. \\

\begin{figure}
\begin{tabular}{llll}

\includegraphics[width=0.5\textwidth]{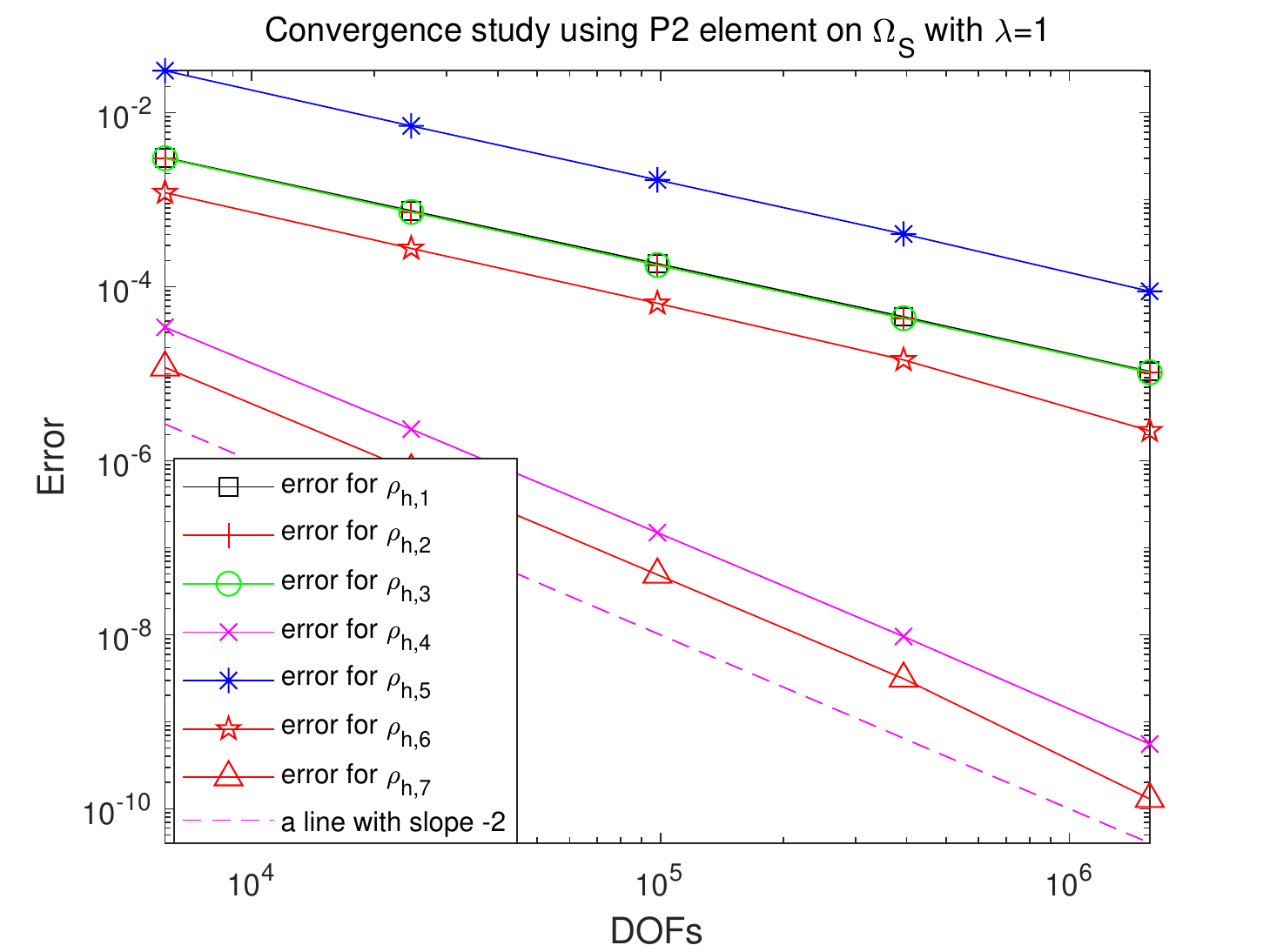}&\includegraphics[width=0.5\textwidth]{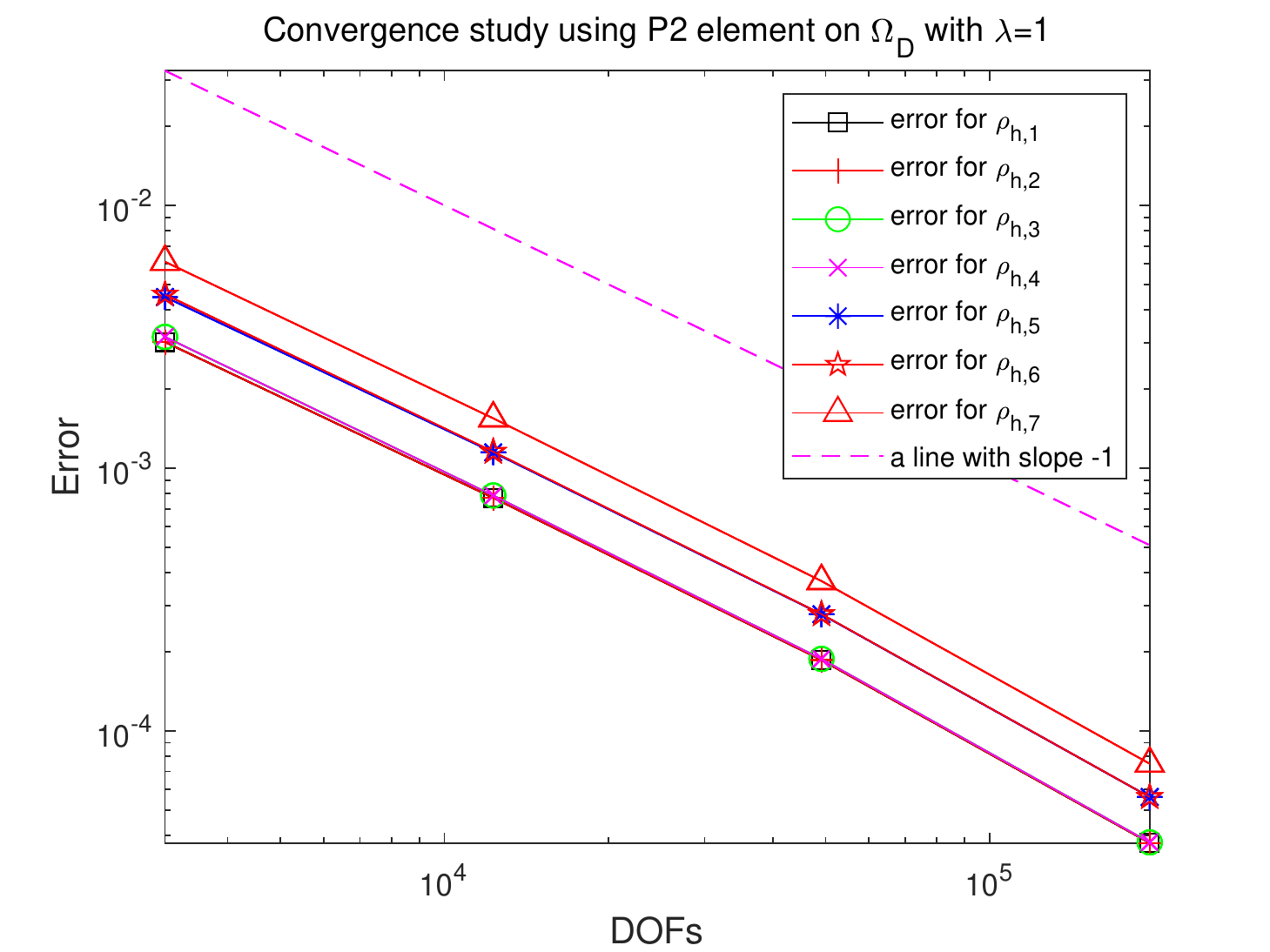}\\
\includegraphics[width=0.5\textwidth]{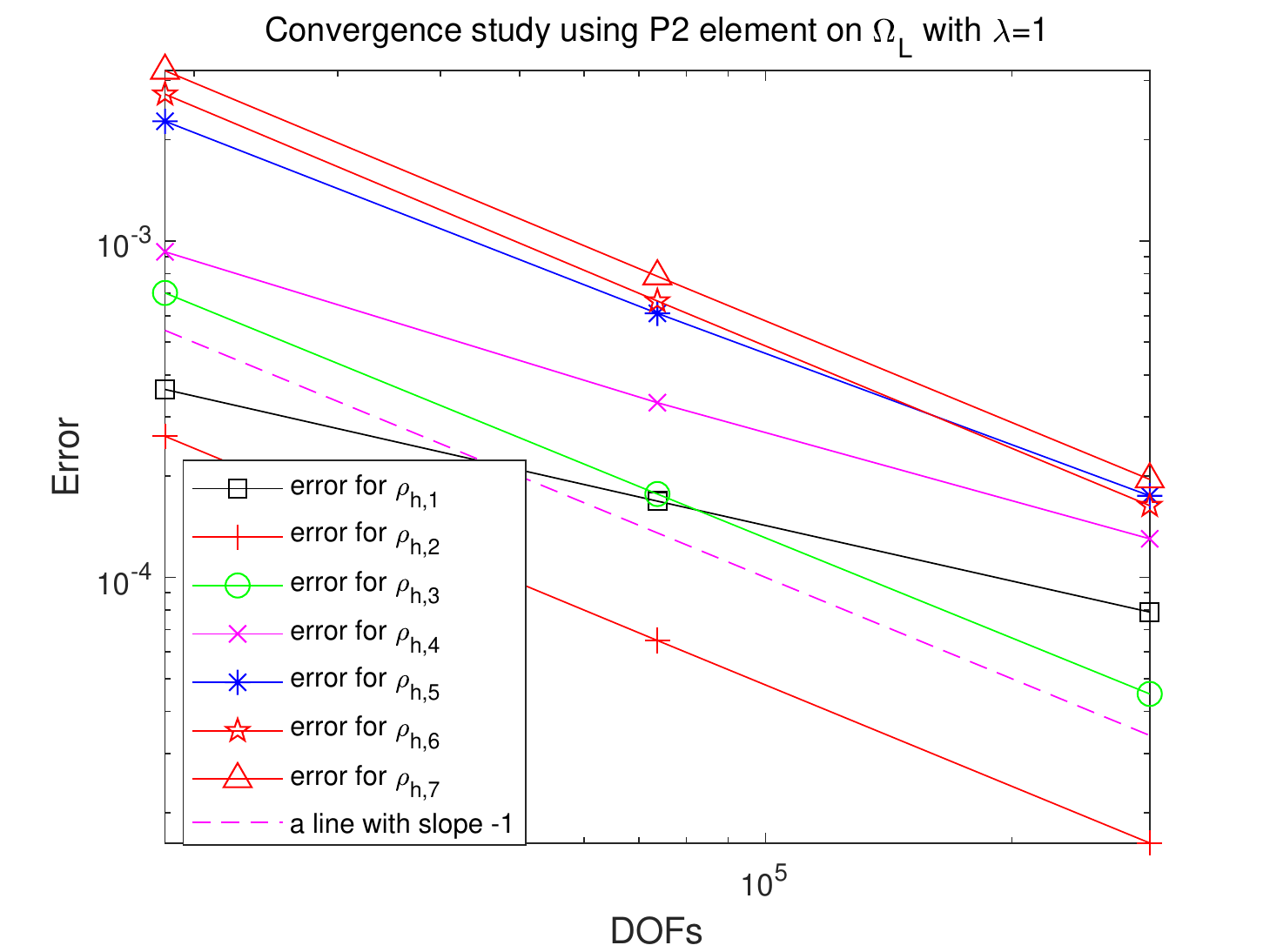}&\includegraphics[width=0.5\textwidth]{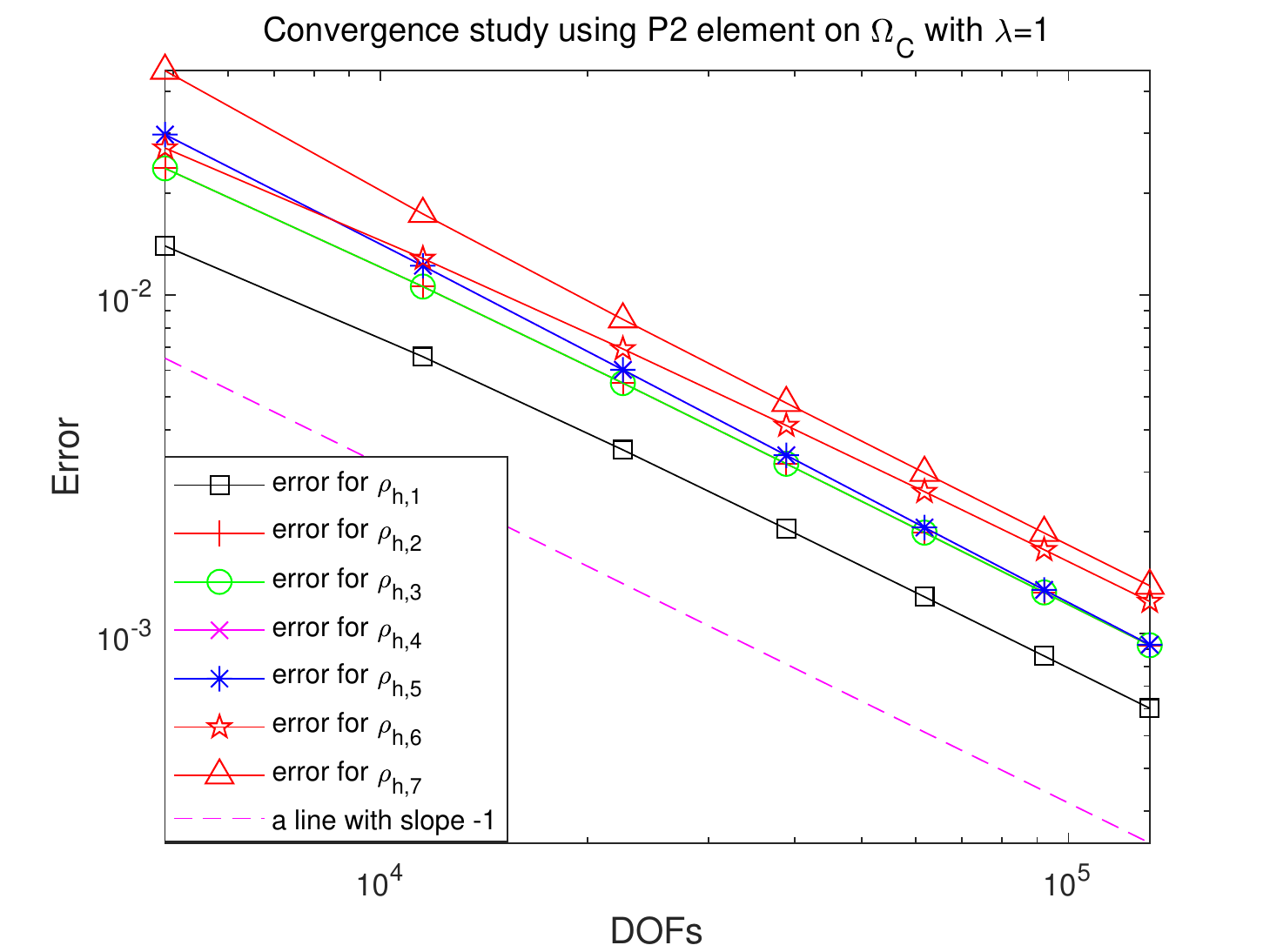}
\end{tabular}
\\{\textrm{\small\bf Figure 1. The convergence study for the first 7 non-zero eigenvalues on $\Omega_{S}$, $\Omega_{D}$, $\Omega_{L}$ and $\Omega_{C}$ using the P2 element when $\lambda=1$.}}
\end{figure}

\begin{figure}
\begin{tabular}{llll}
\includegraphics[width=0.5\textwidth]{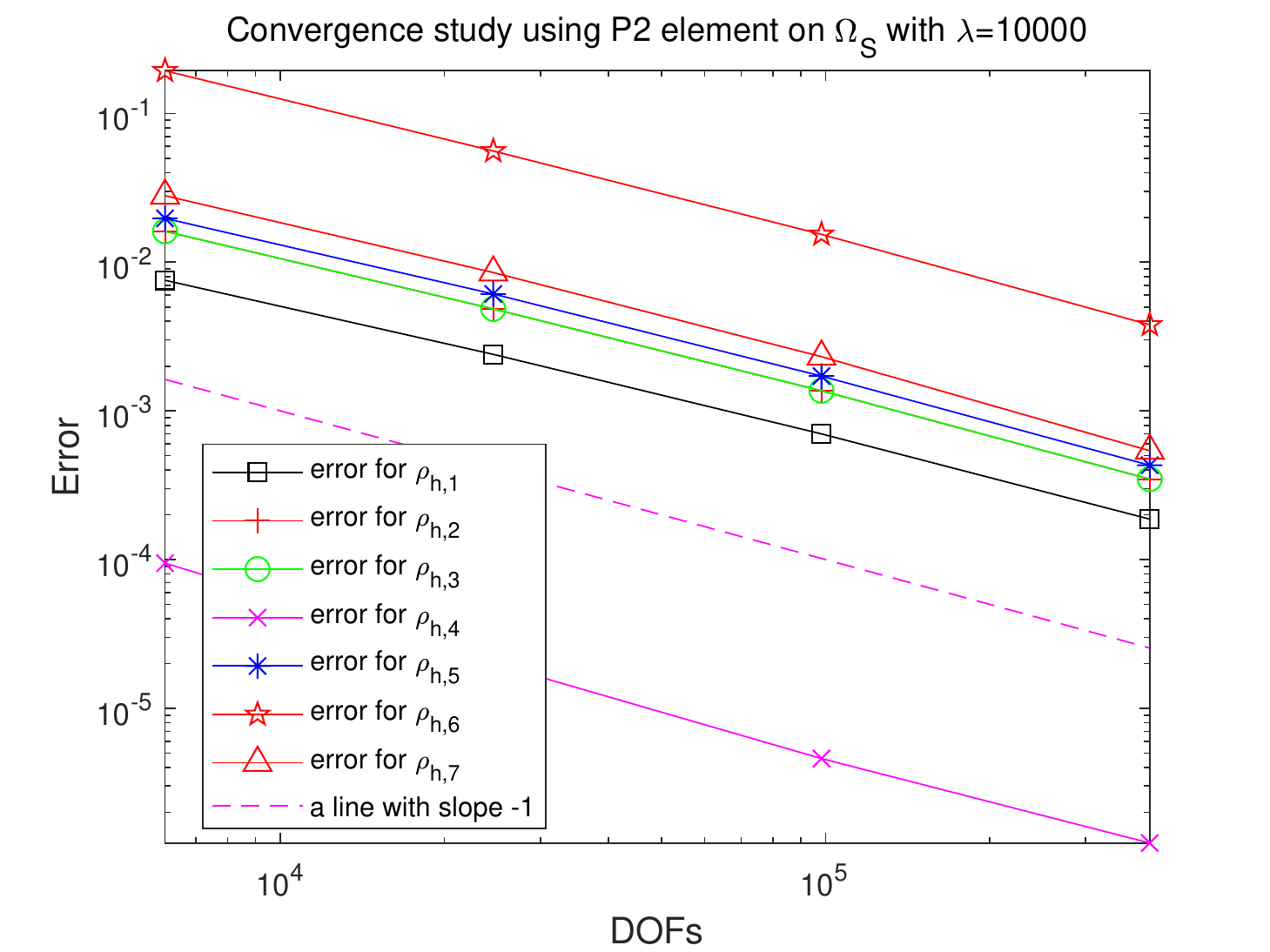}&\includegraphics[width=0.5\textwidth]{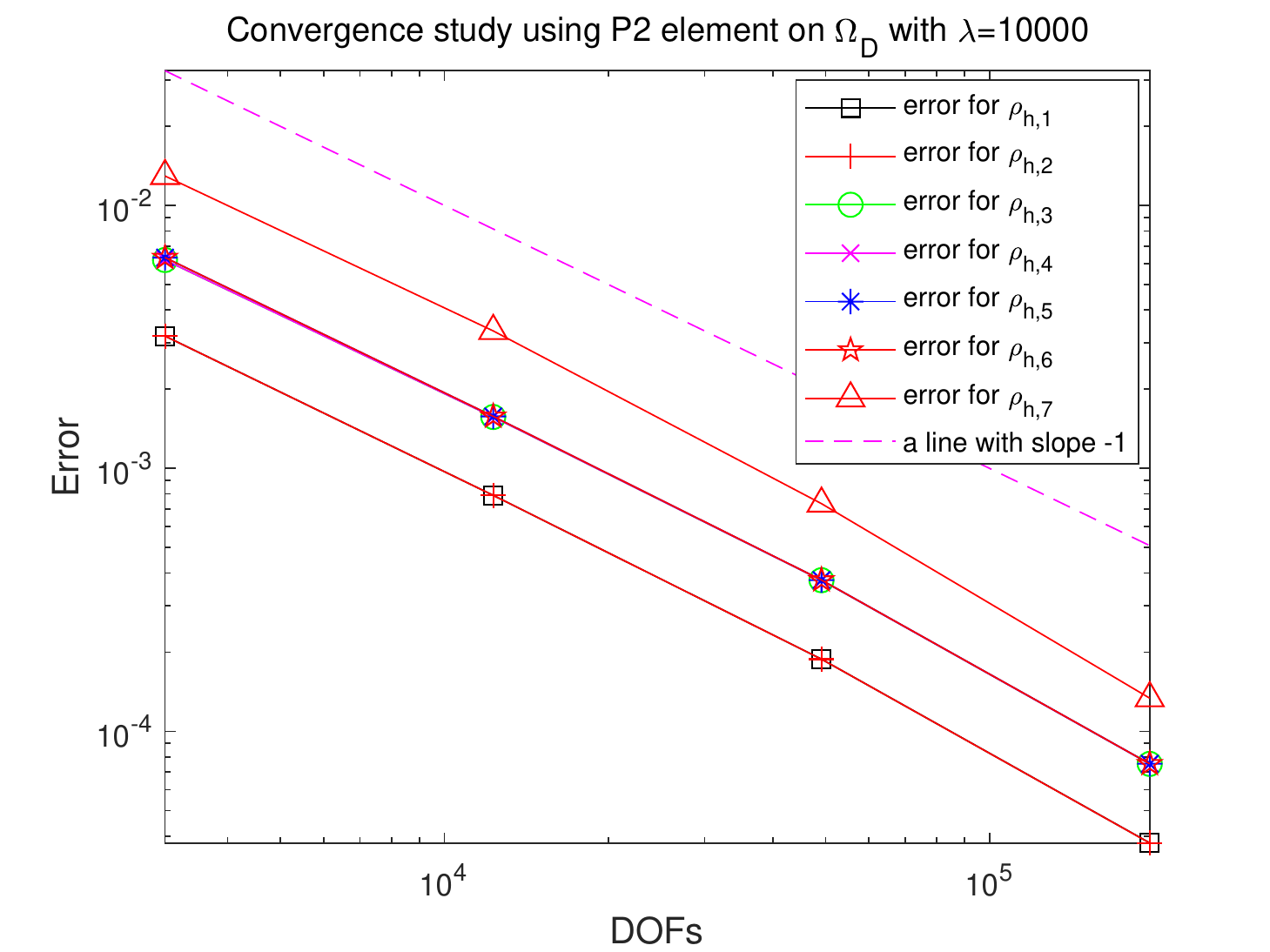}\\
\includegraphics[width=0.5\textwidth]{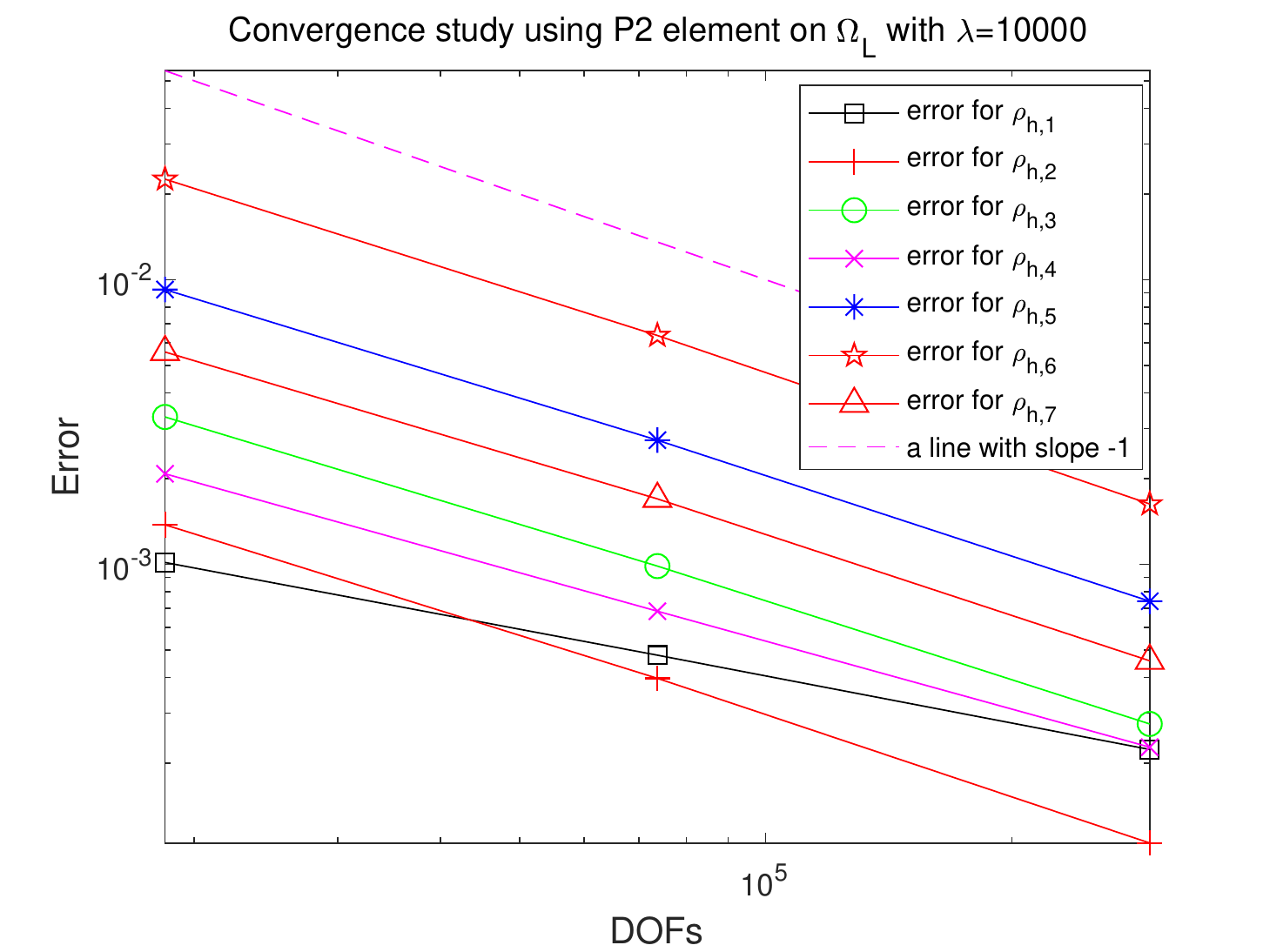}&\includegraphics[width=0.5\textwidth]{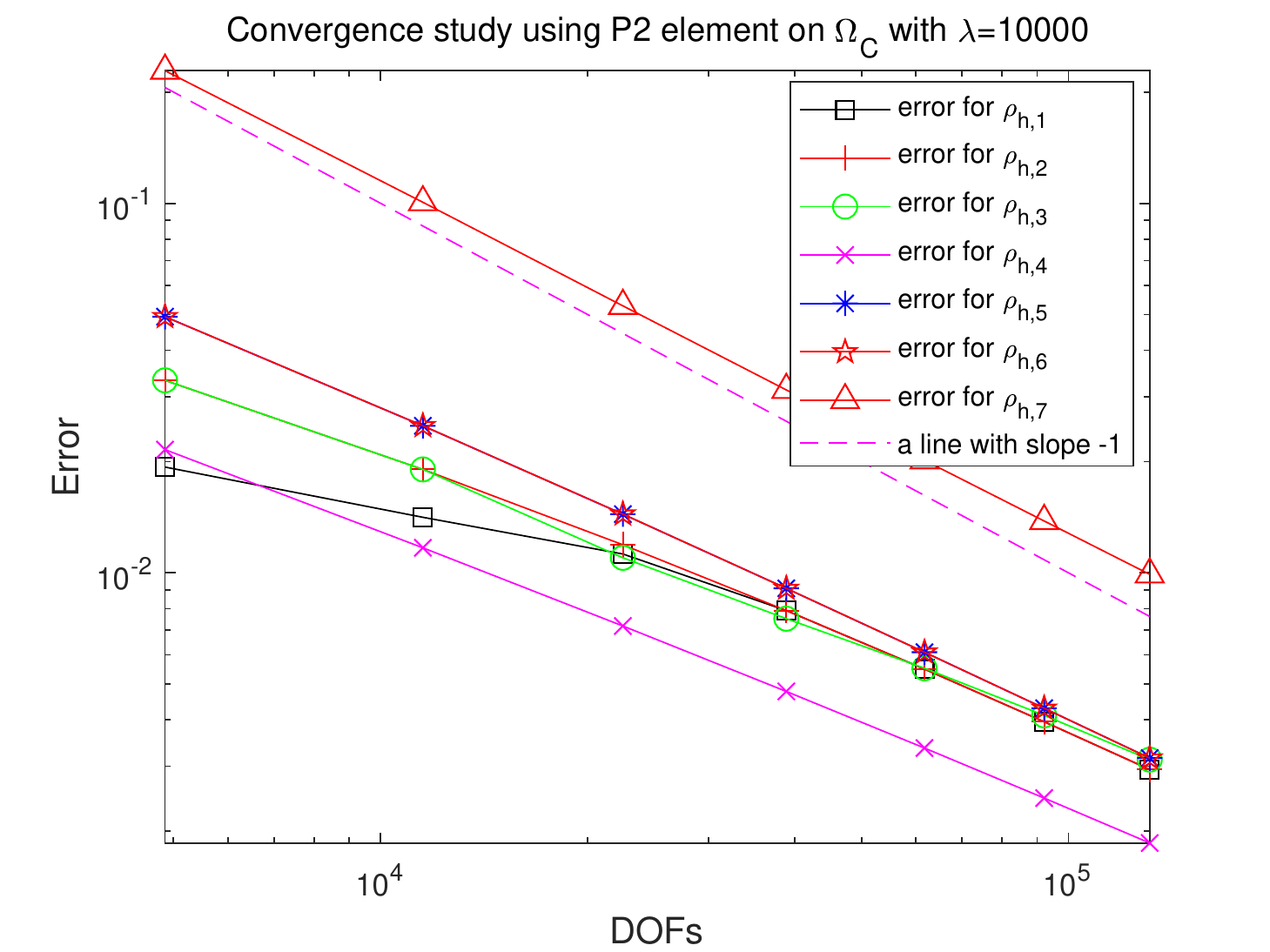}
\end{tabular}
\\{\textrm{\small\bf Figure 2. The convergence study for the first 7 non-zero eigenvalues on $\Omega_{S}$, $\Omega_{D}$, $\Omega_{L}$ and $\Omega_{C}$ using the P2 element when $\lambda=10000$.}}
\end{figure}

\begin{figure}
\begin{tabular}{llll}
\includegraphics[width=0.5\textwidth]{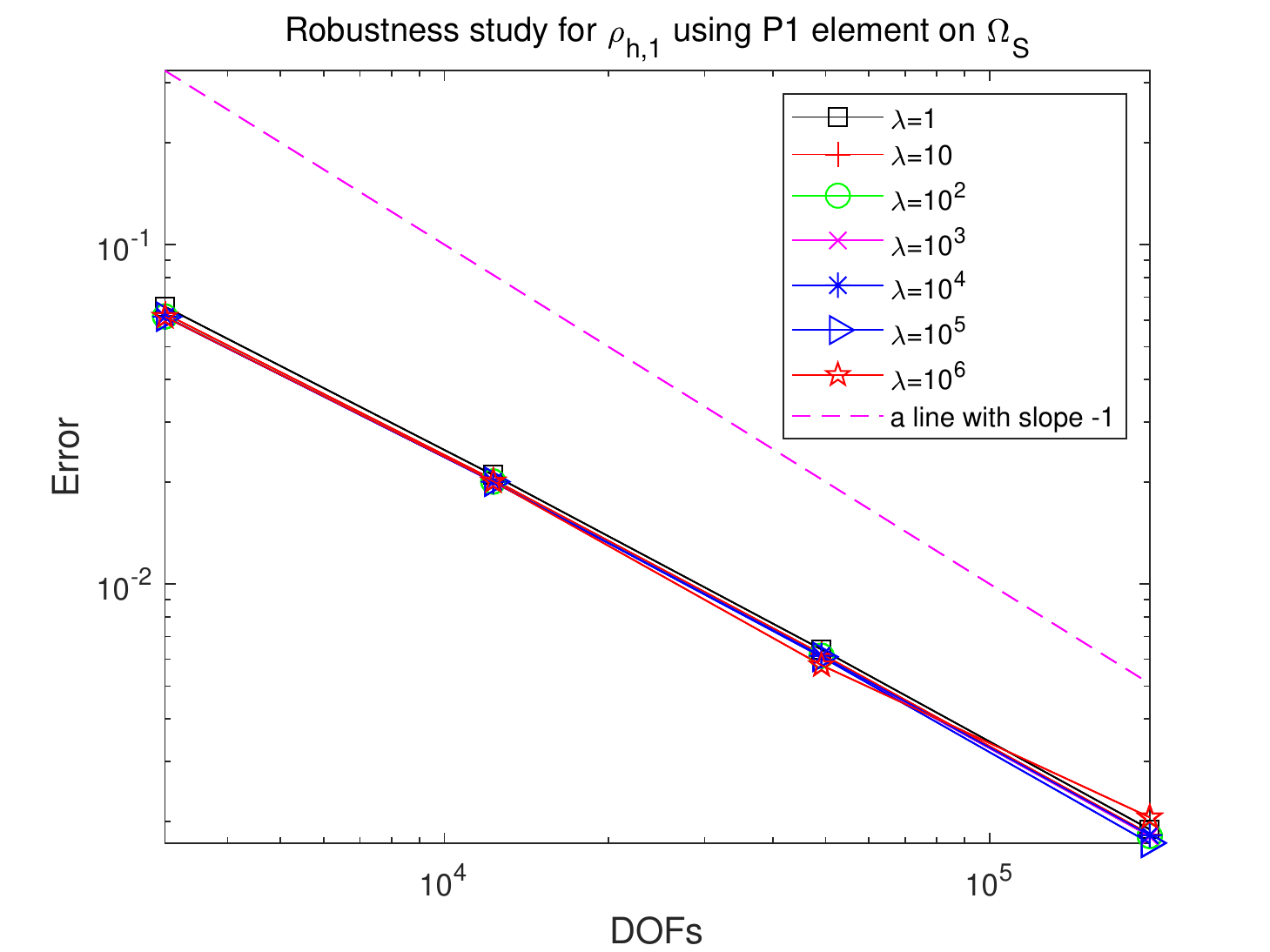}&\includegraphics[width=0.5\textwidth]{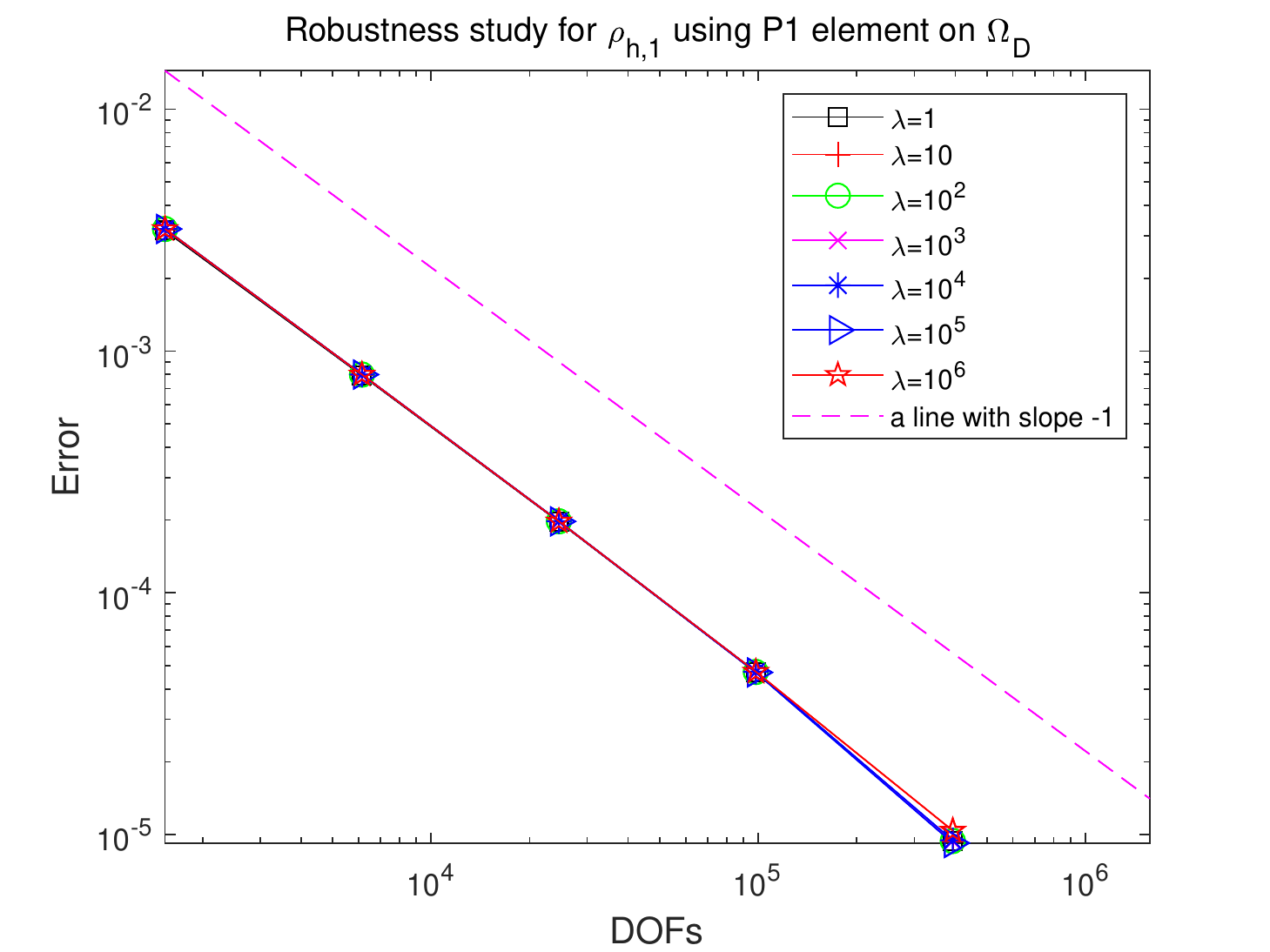}\\
\includegraphics[width=0.5\textwidth]{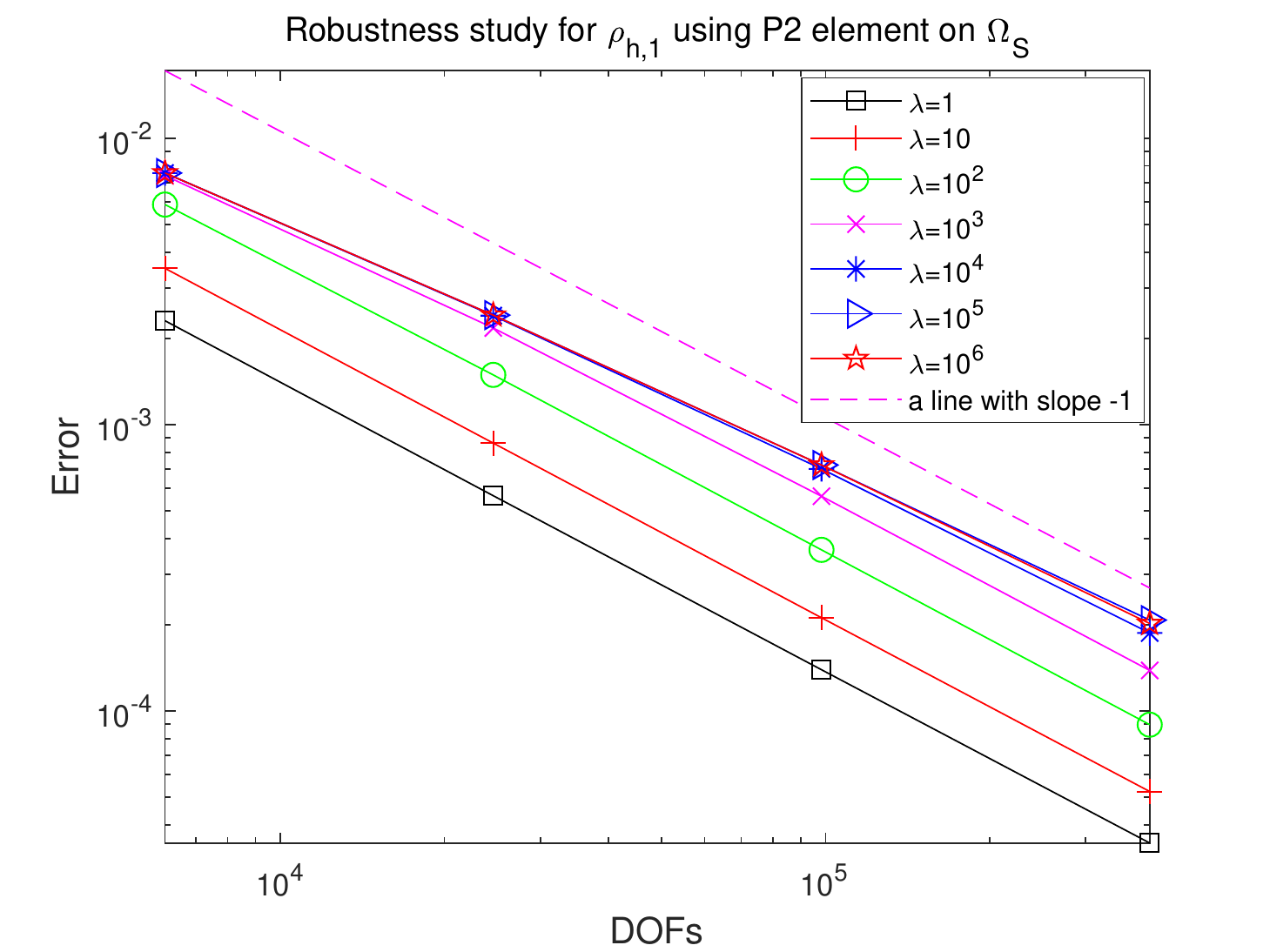}&\includegraphics[width=0.5\textwidth]{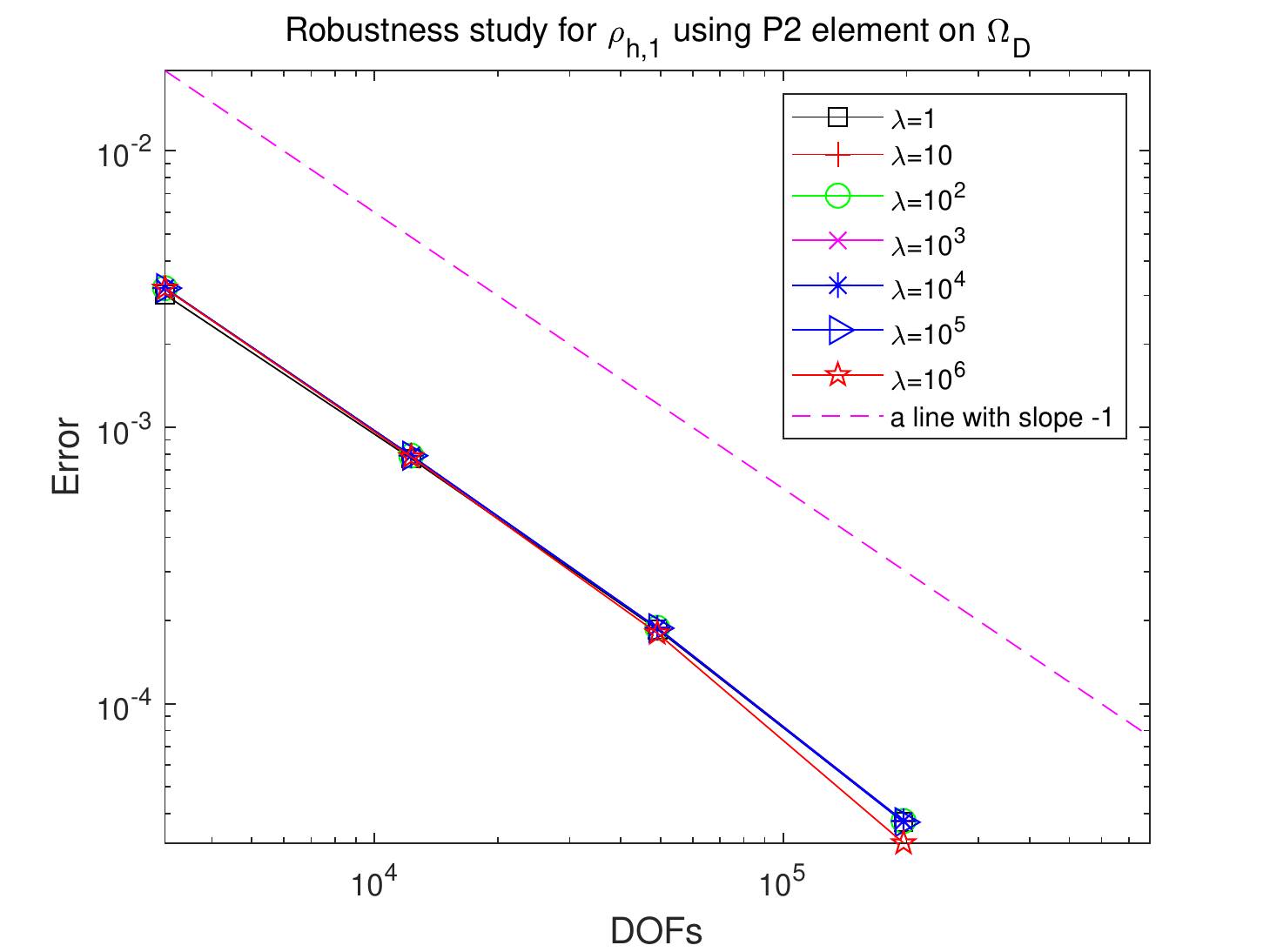}\\
\includegraphics[width=0.5\textwidth]{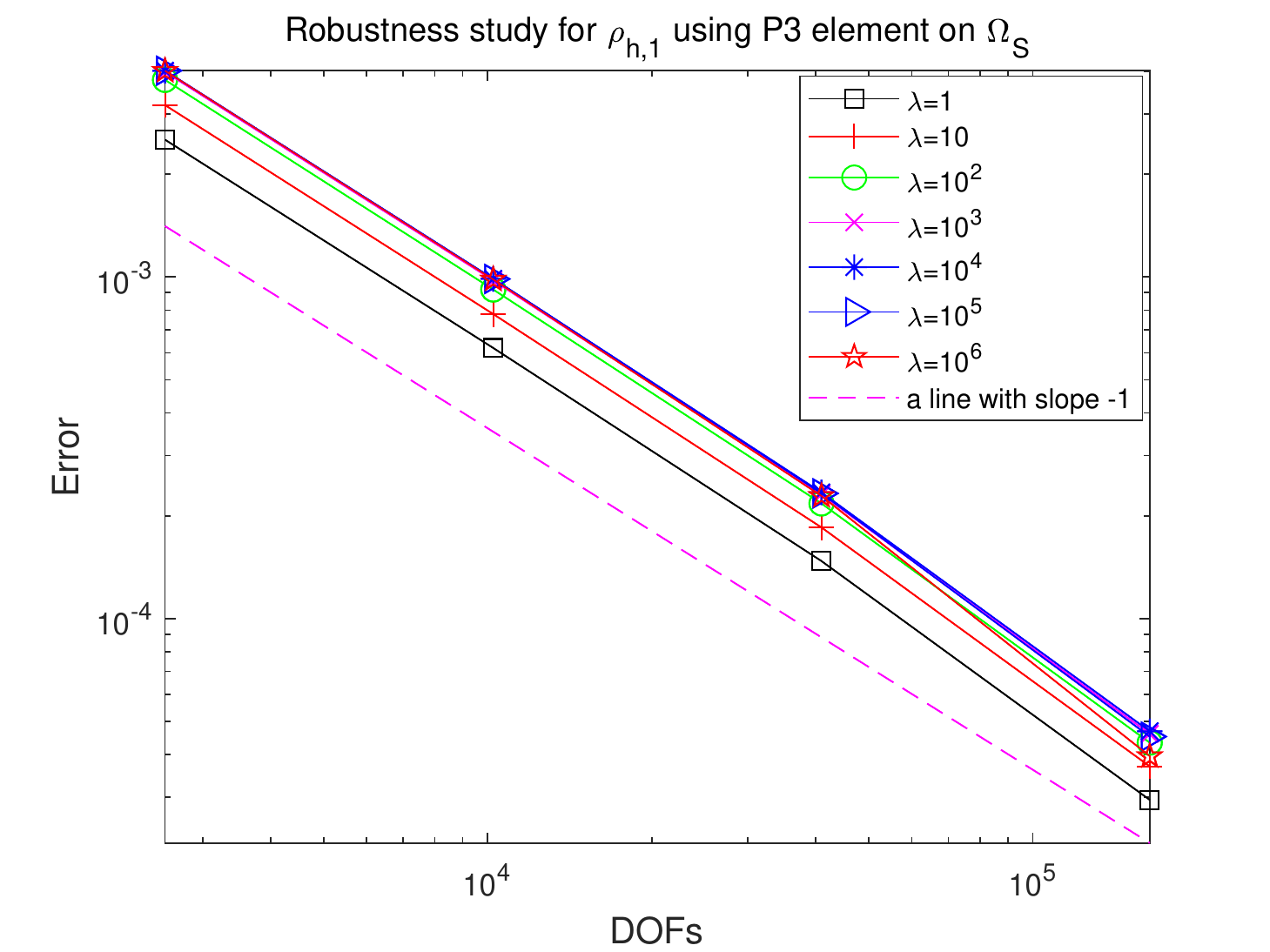}&\includegraphics[width=0.5\textwidth]{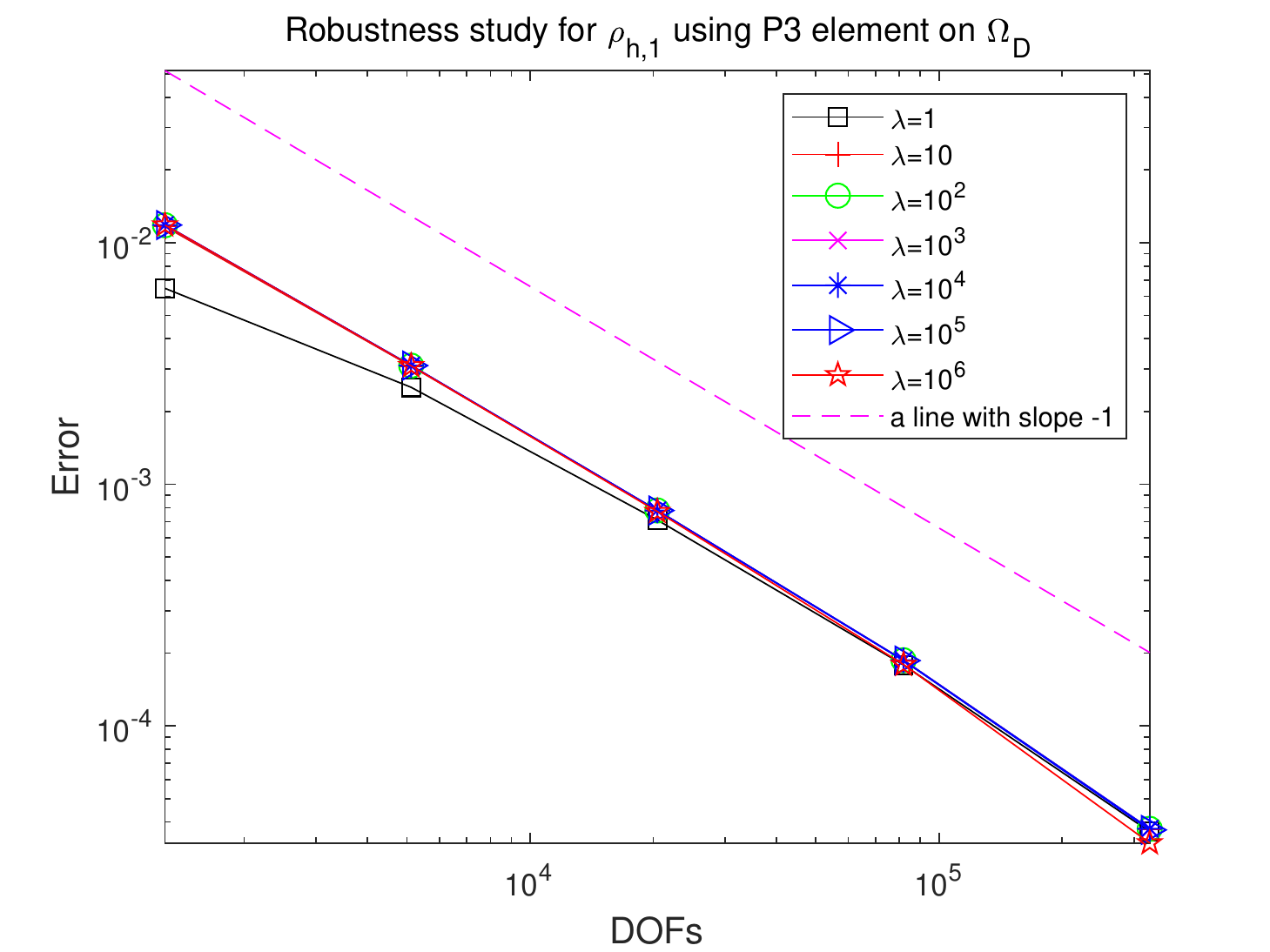}
\end{tabular}
\\{\textrm{\small\bf Figure 3. The robustness study for $\rho_{h,1}$ on $\Omega_{S}$ (left column) and $\Omega_{D}$ (right column)  using the P1, P2, P3 elements.}}
\end{figure}

\begin{figure}
\begin{tabular}{llll}
\includegraphics[width=0.5\textwidth]{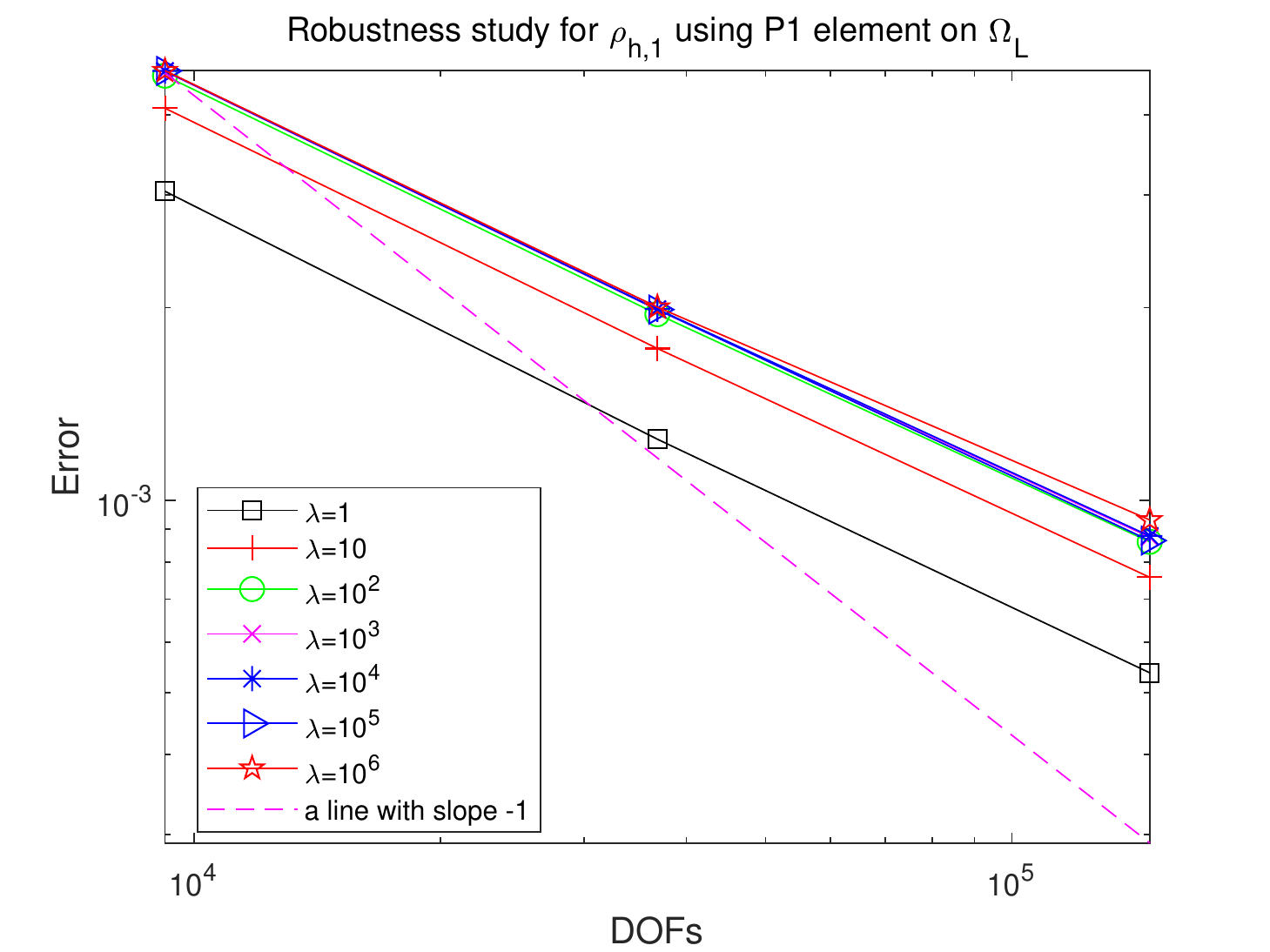}&\includegraphics[width=0.5\textwidth]{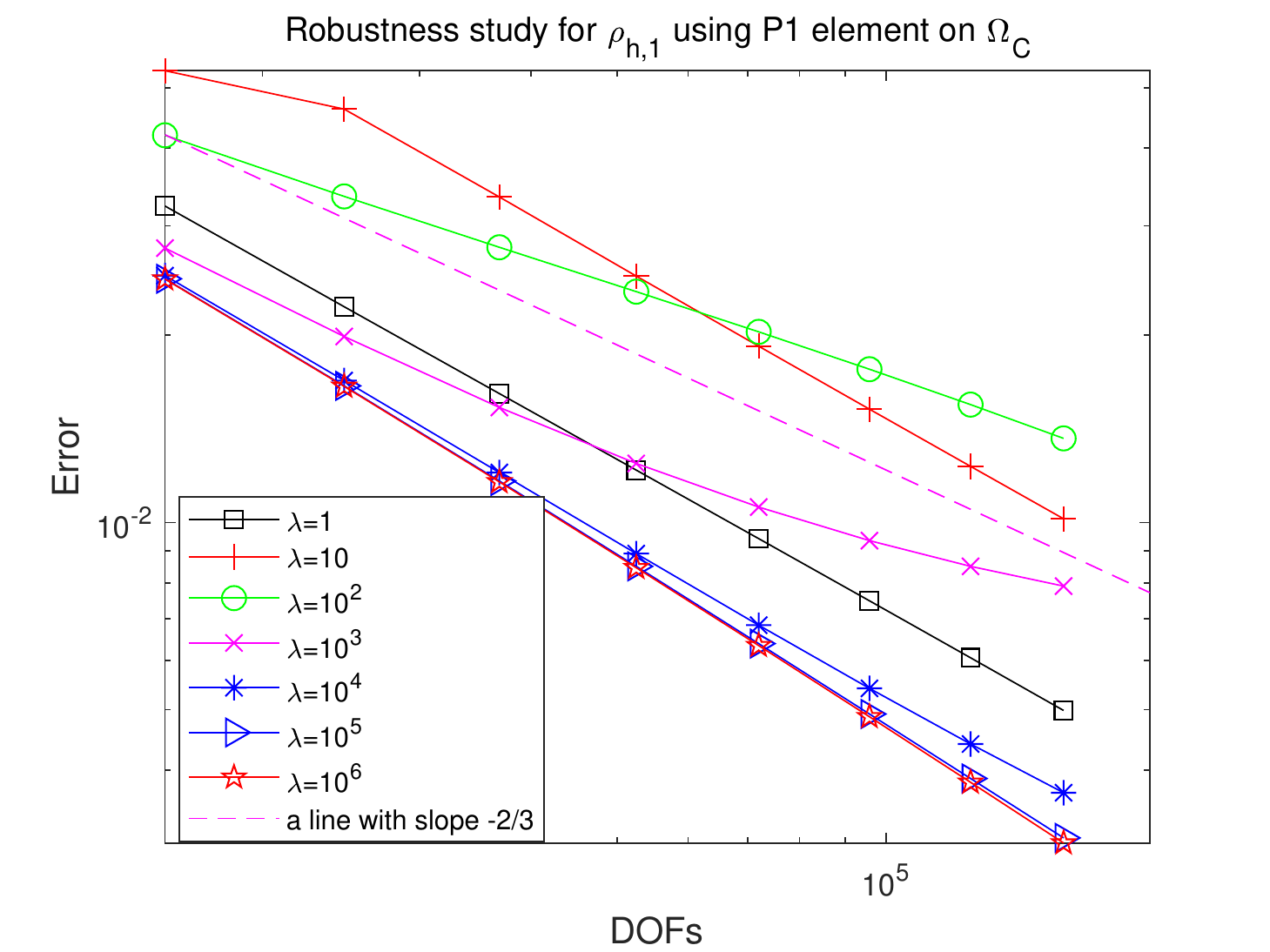}\\
\includegraphics[width=0.5\textwidth]{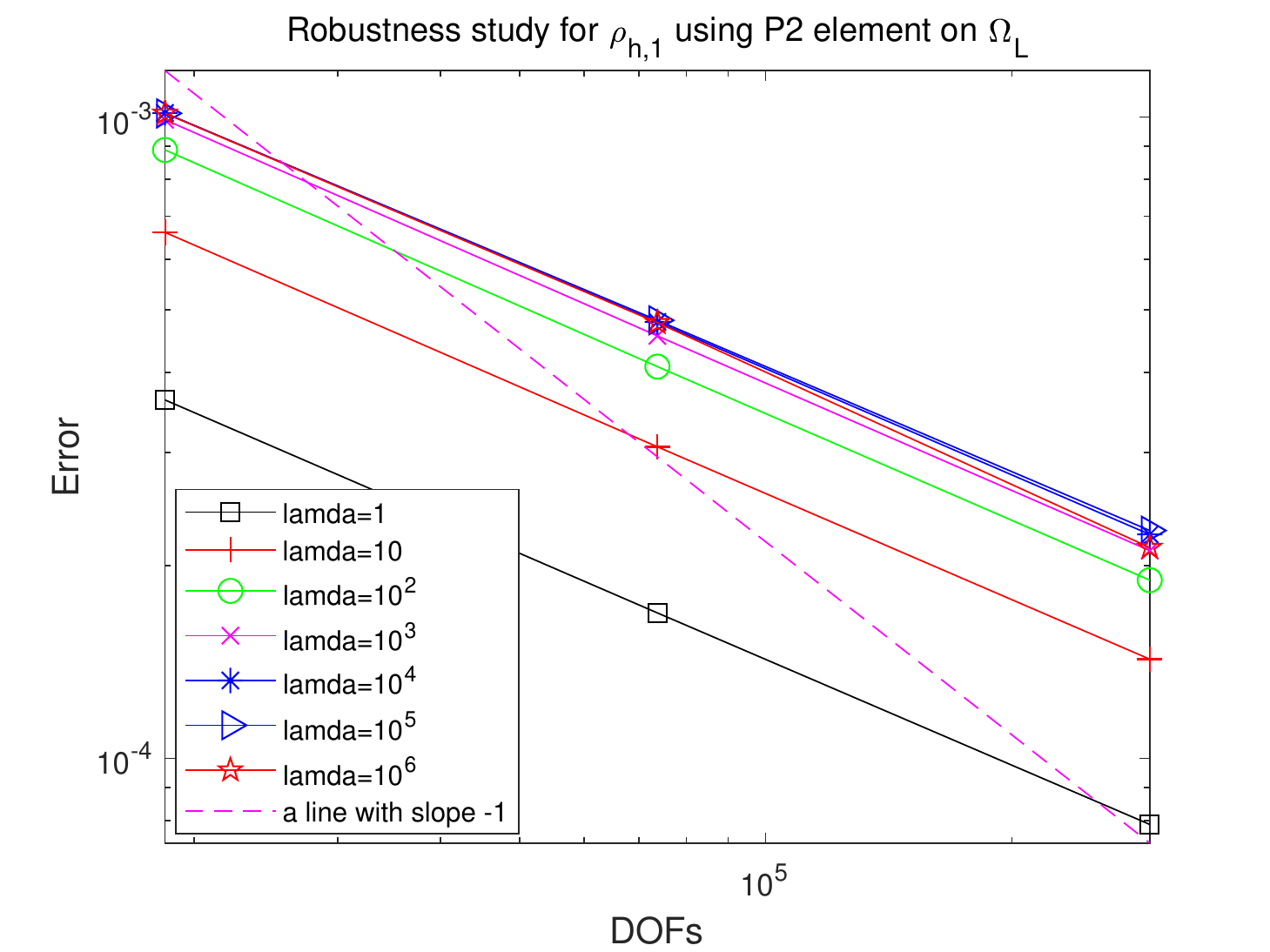}&\includegraphics[width=0.5\textwidth]{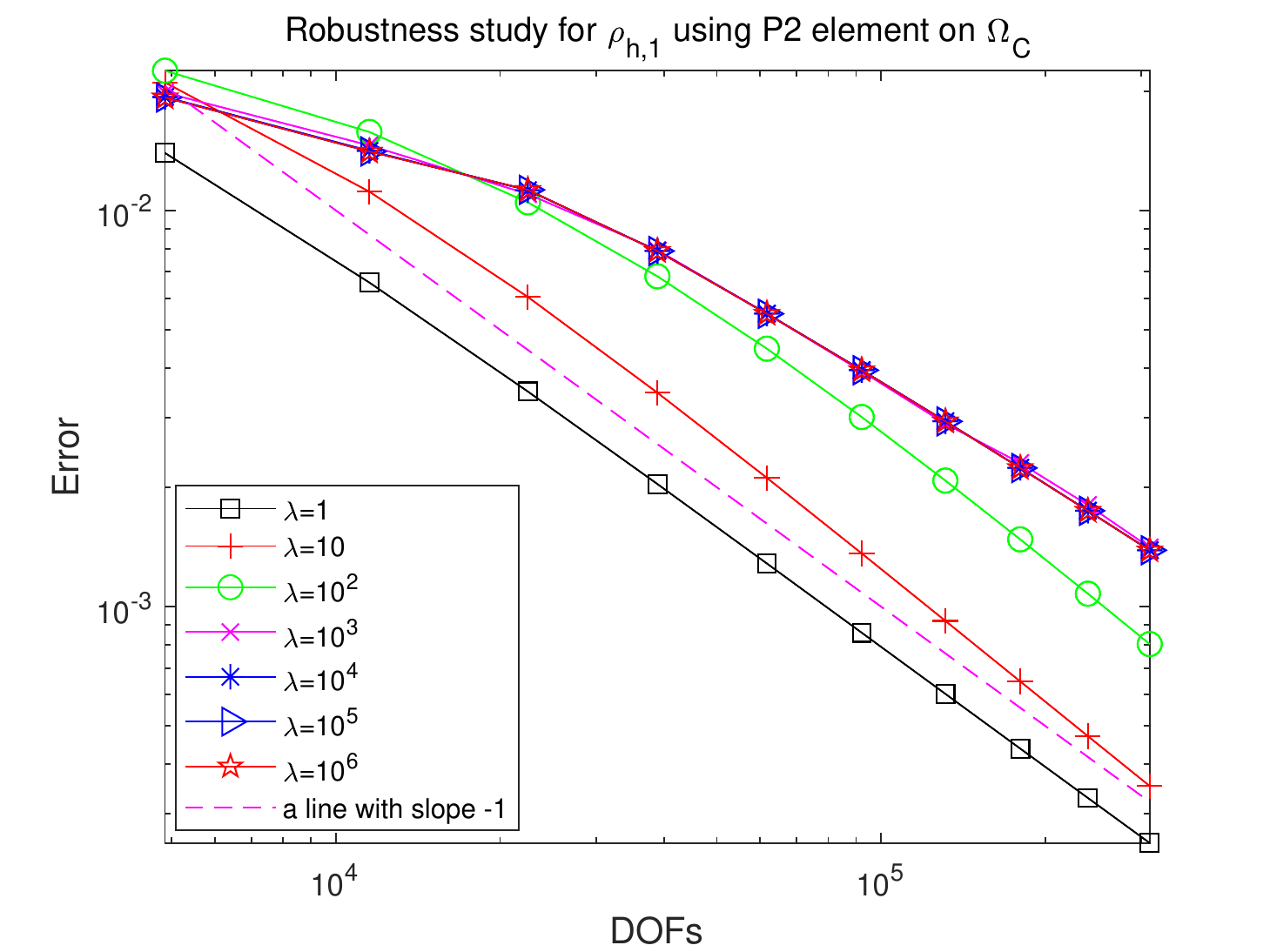}\\
\includegraphics[width=0.5\textwidth]{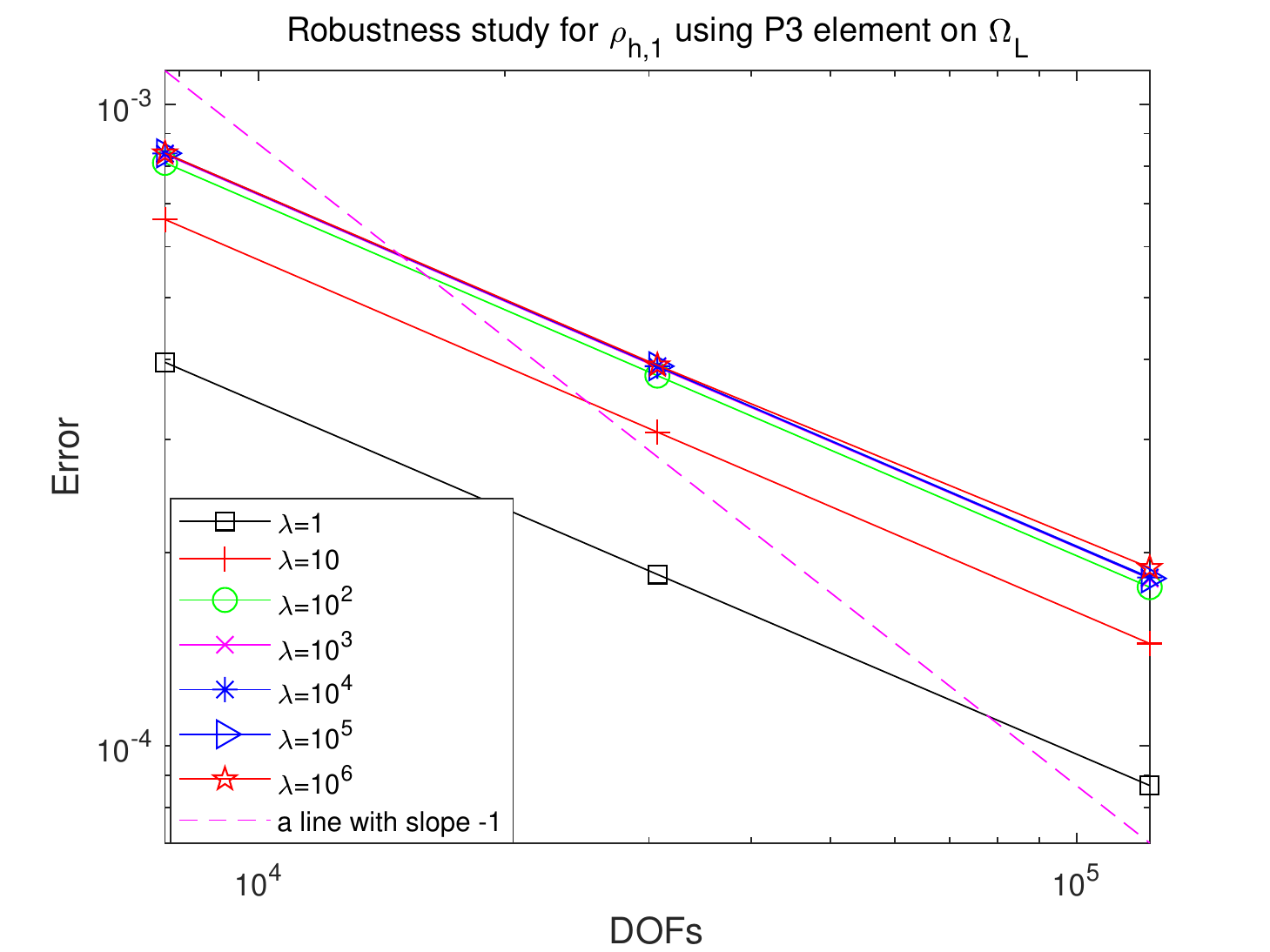}&\includegraphics[width=0.5\textwidth]{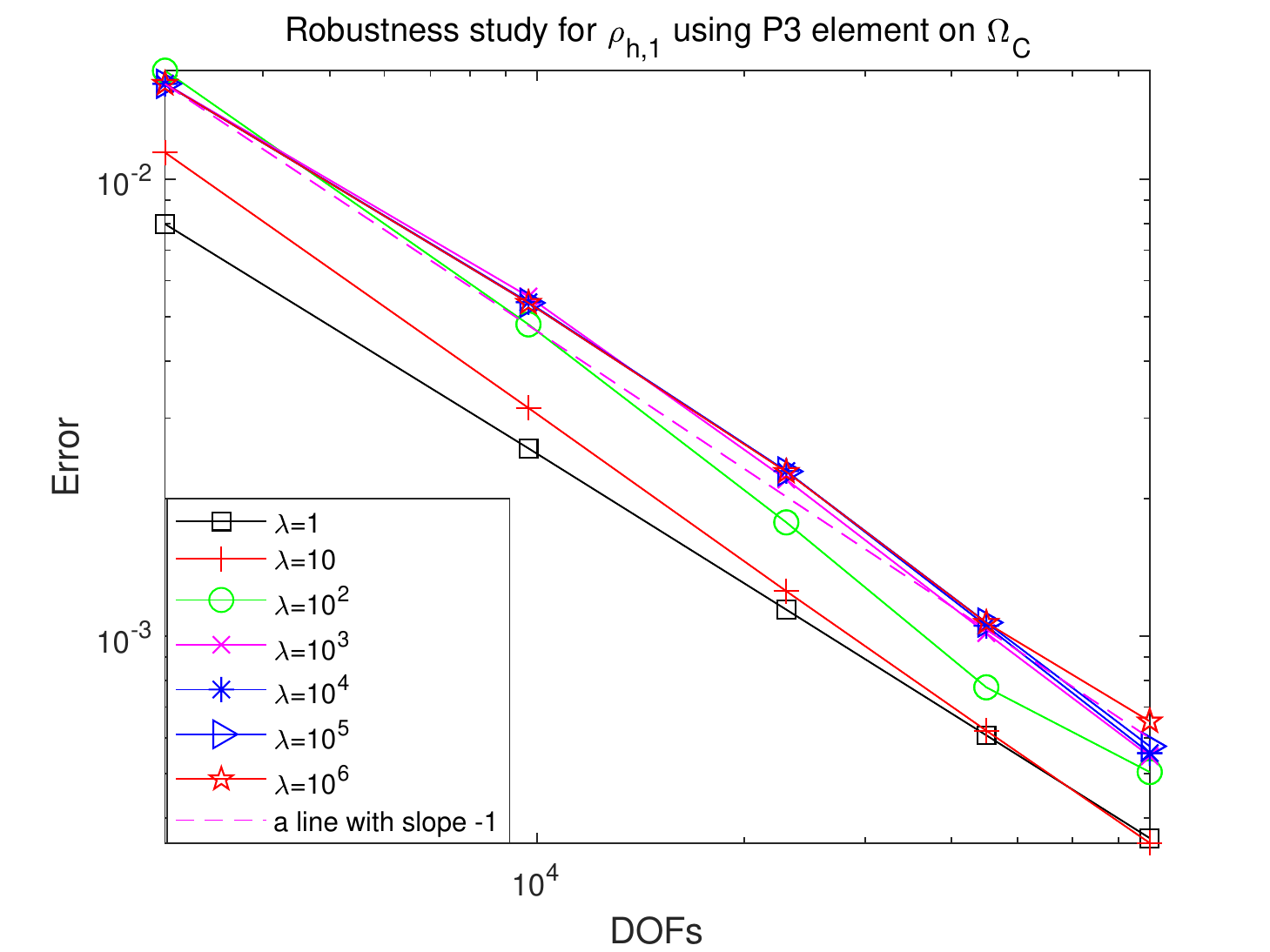}
\end{tabular}
\\{\textrm{\small\bf Figure 4. The robustness study for $\rho_{h,1}$  on $\Omega_{L}$ (left column) and $\Omega_{C}$ (right column)  using the P1, P2, P3 elements.}}
\end{figure}

\begin{figure}
\begin{tabular}{llll}
\includegraphics[width=0.5\textwidth]{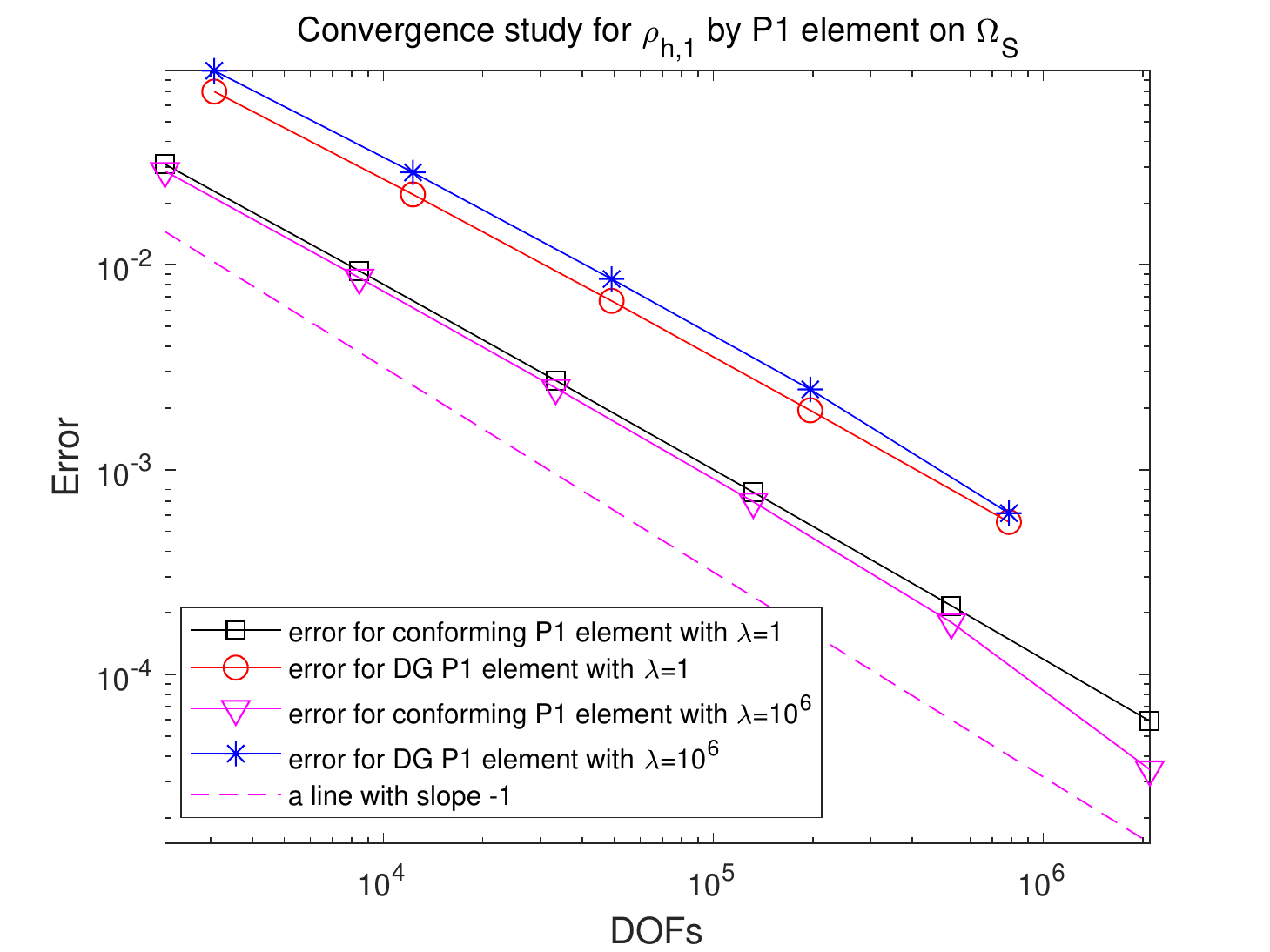}&\includegraphics[width=0.5\textwidth]{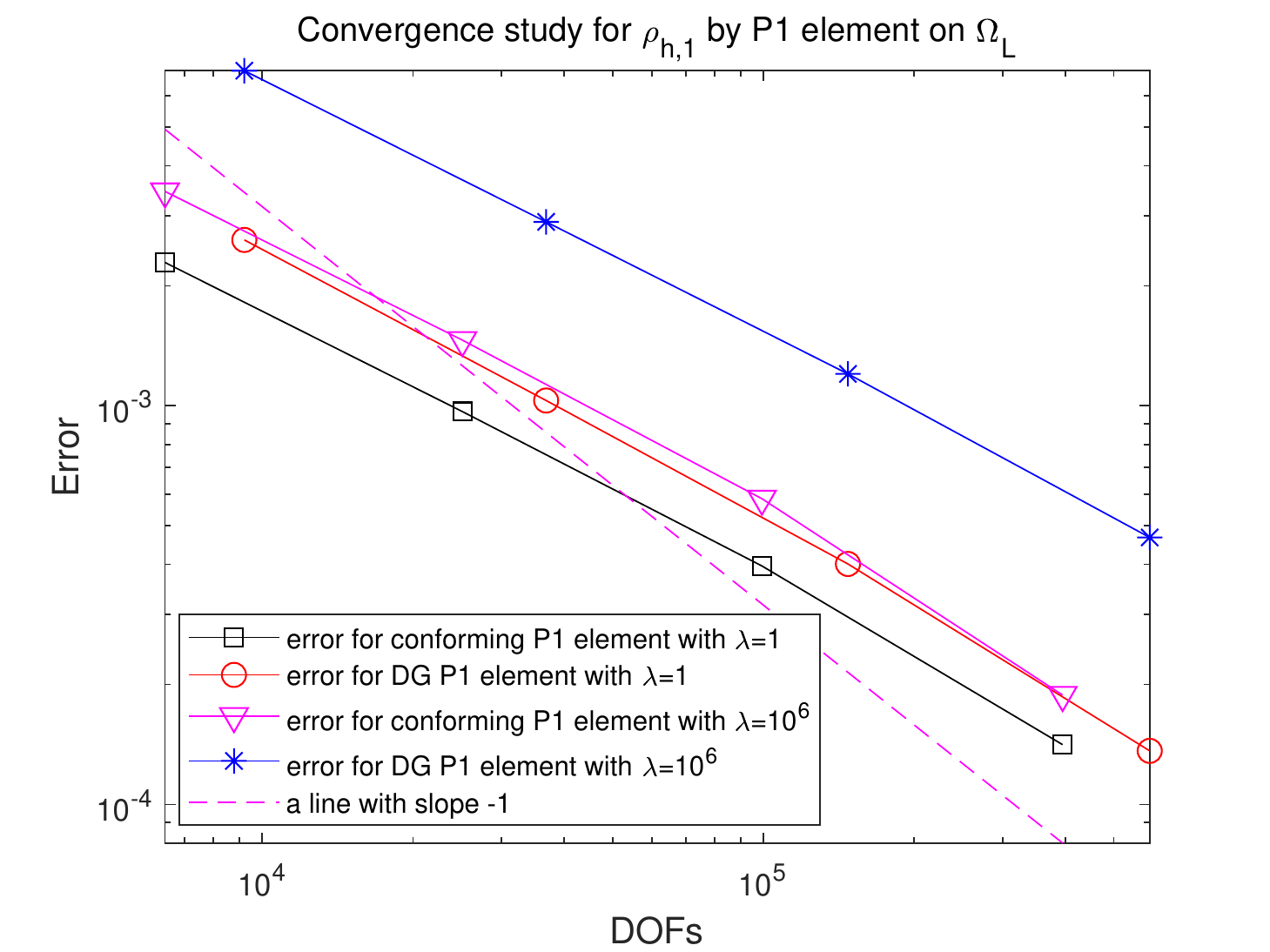}\\
\includegraphics[width=0.5\textwidth]{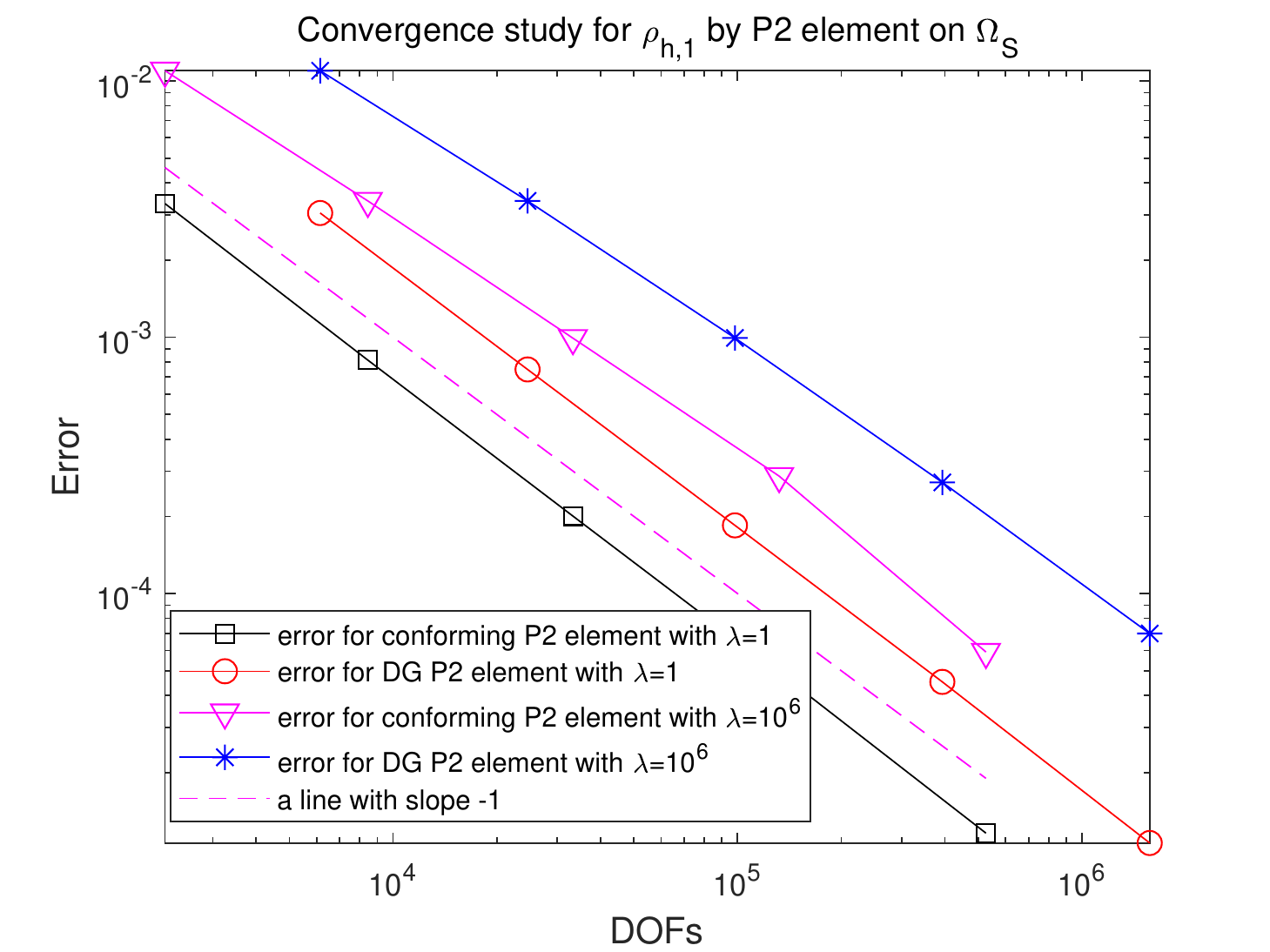}&\includegraphics[width=0.5\textwidth]{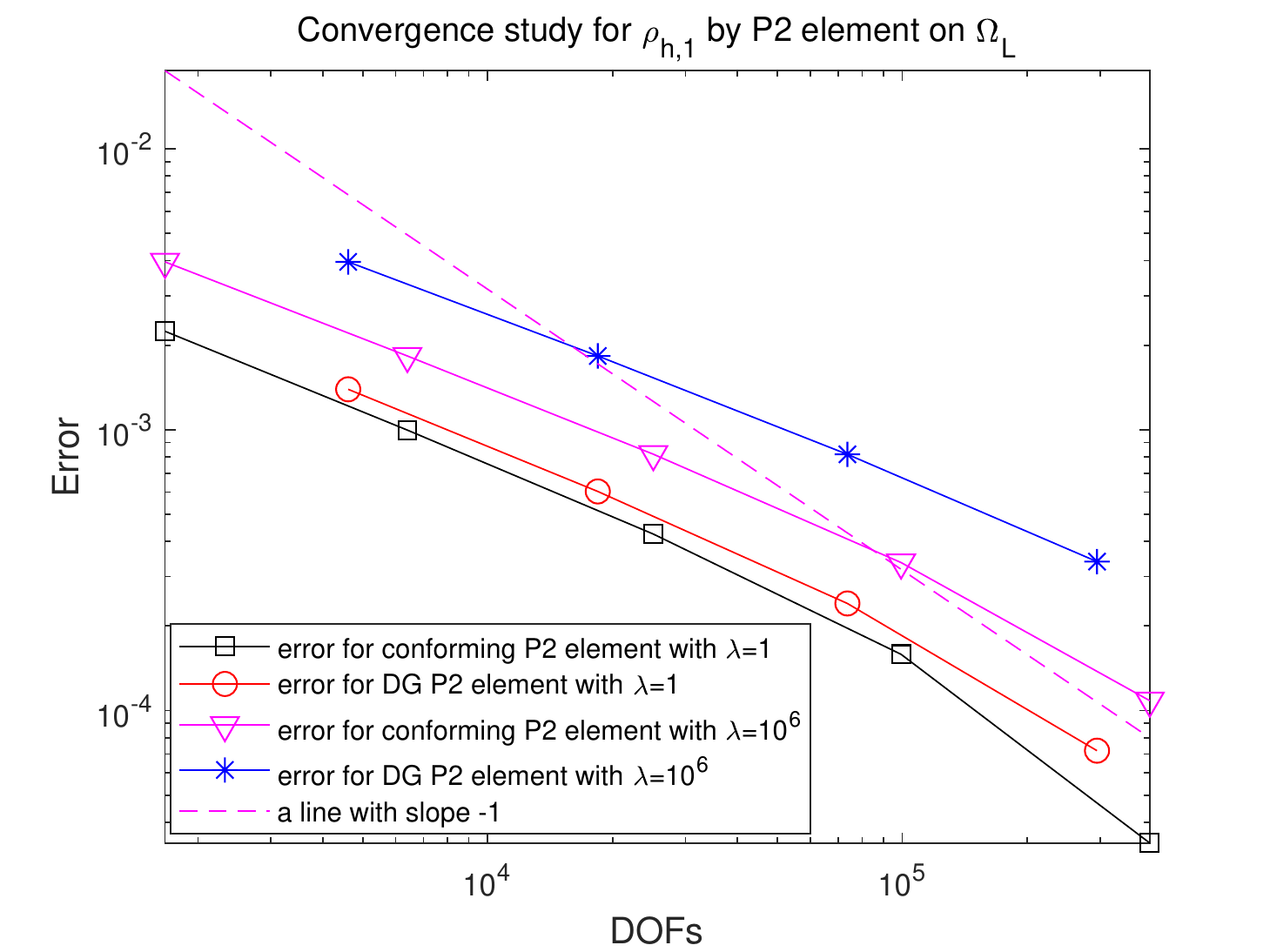}
\end{tabular}
\\{\textrm{\small\bf Figure 5. The convergence study for $\rho_{h,1}$ by conforming and
  DG P1 element with $\lambda=1,10^6$ on $\Omega_{S}$ and $\Omega_{L}$.}}
\end{figure}

\begin{figure}
\begin{tabular}{llll}
\includegraphics[width=0.5\textwidth]{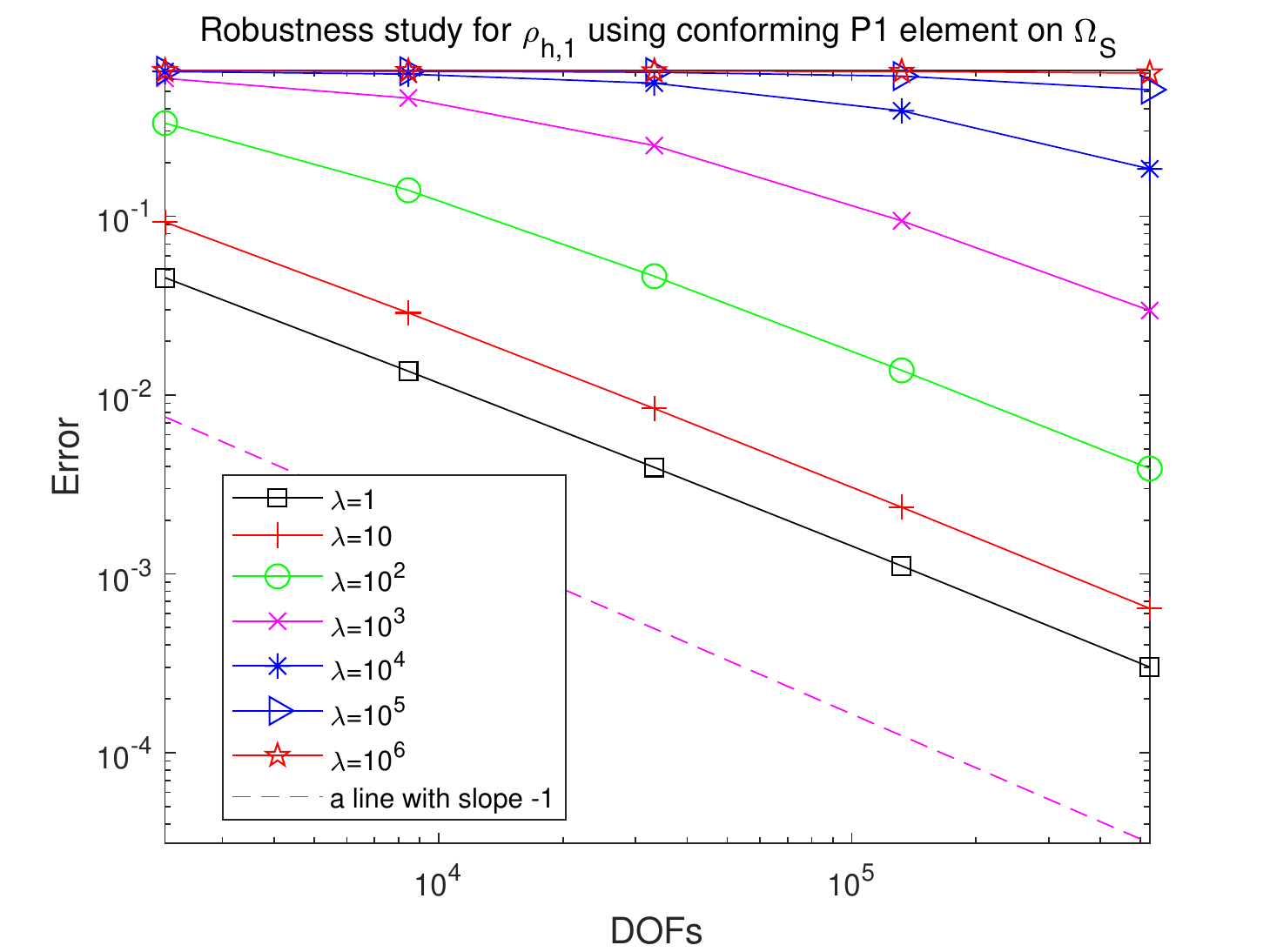}&\includegraphics[width=0.5\textwidth]{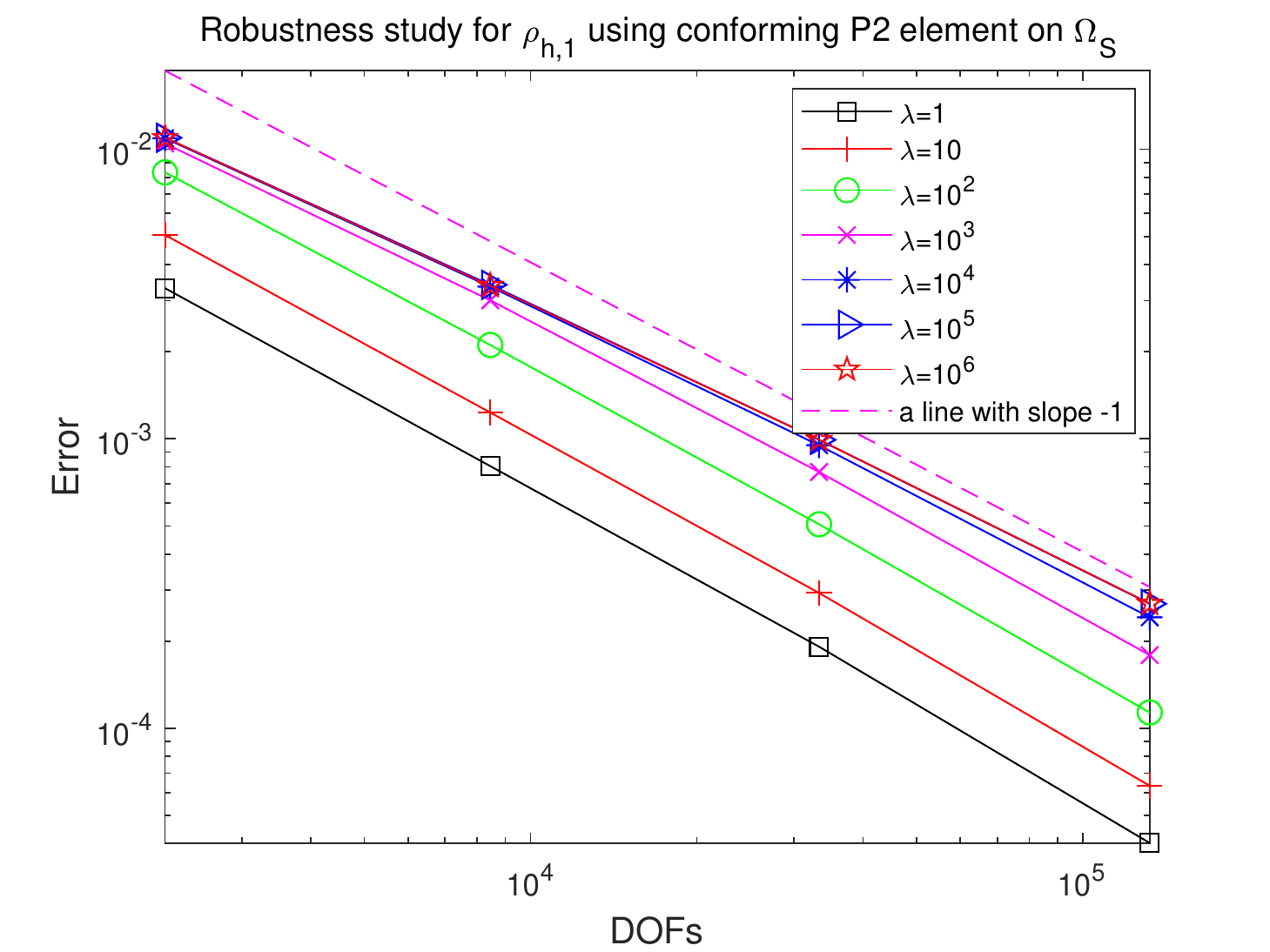}\\
\includegraphics[width=0.5\textwidth]{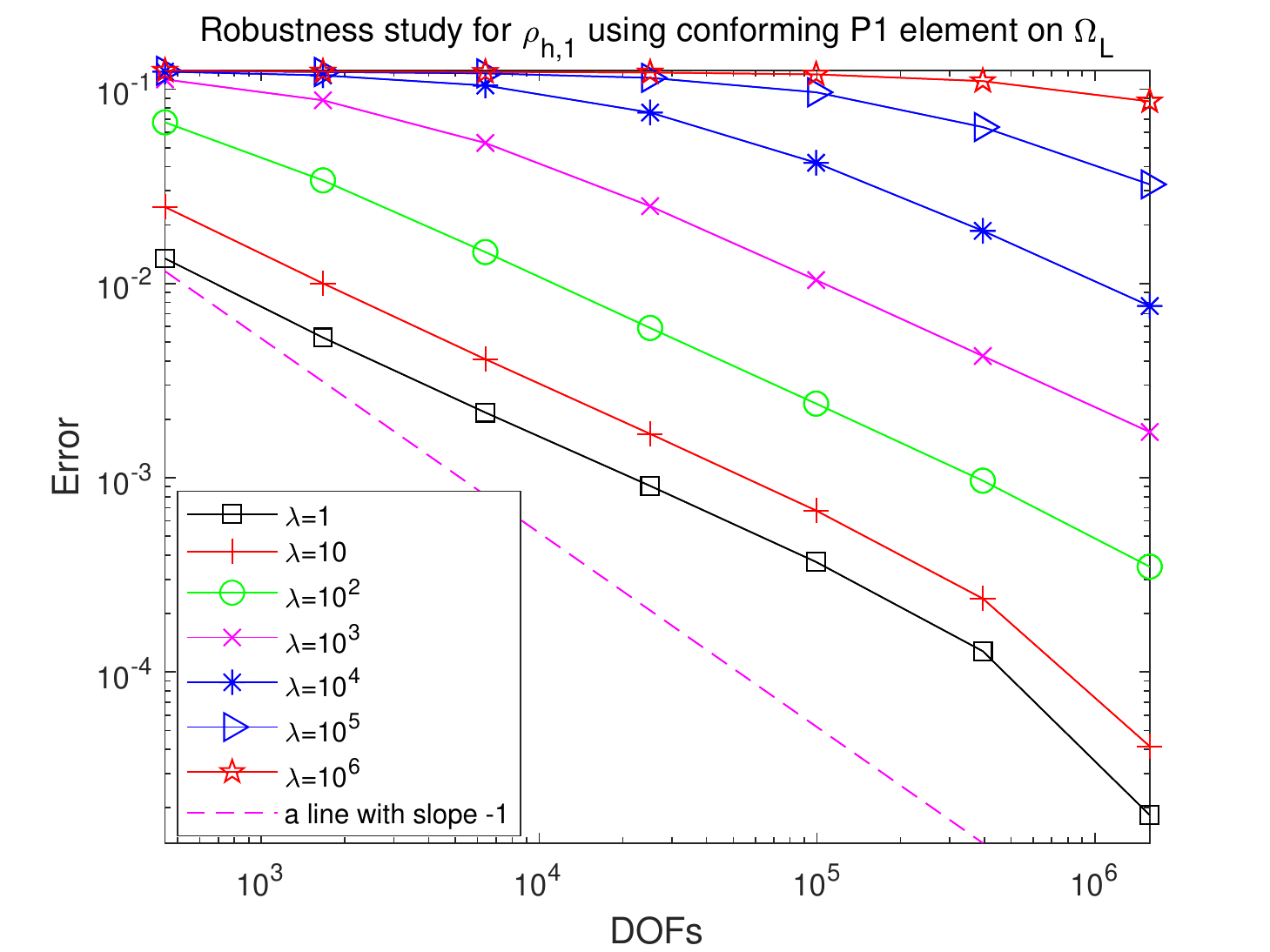}&\includegraphics[width=0.5\textwidth]{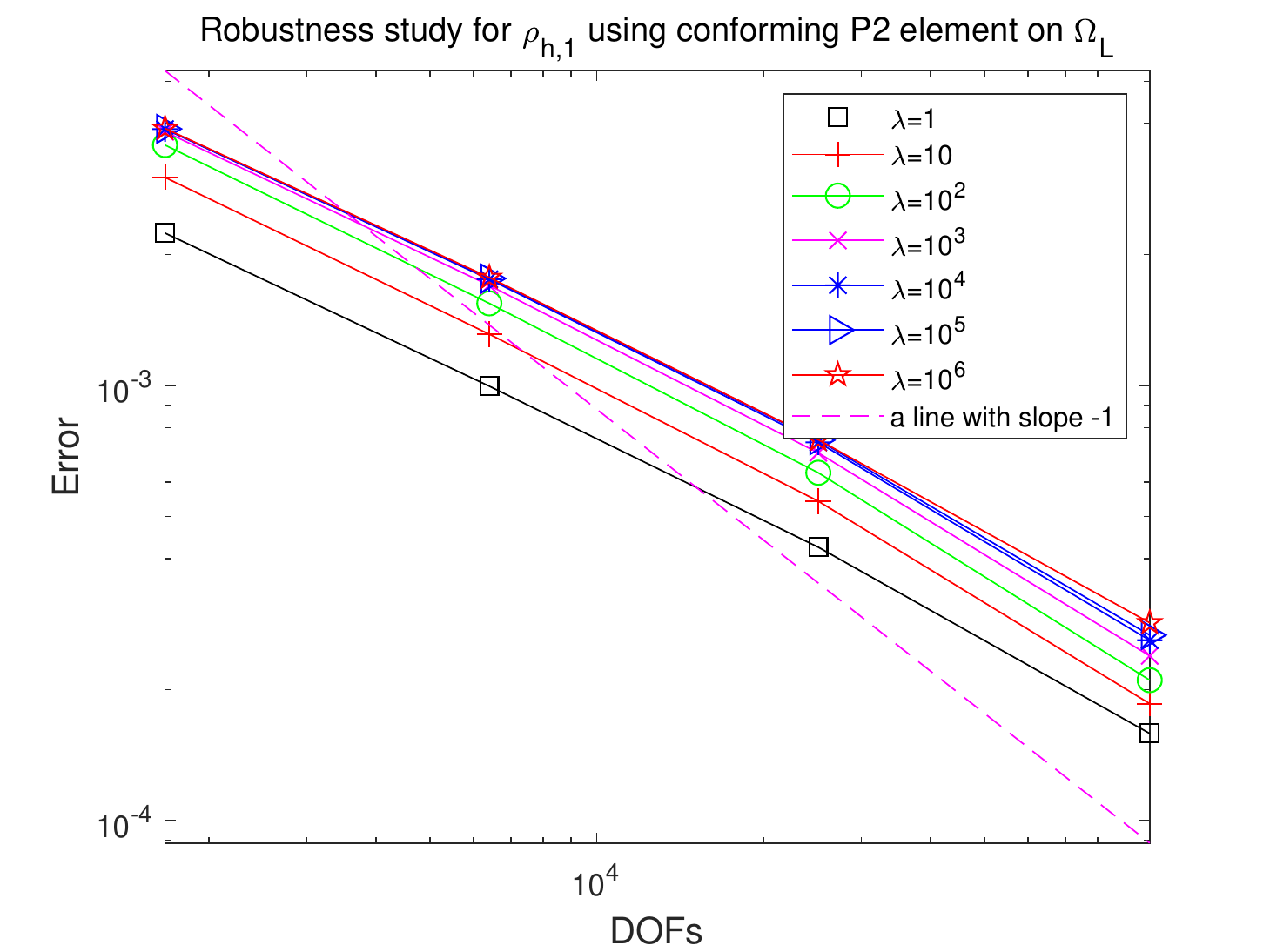}\\
\includegraphics[width=0.5\textwidth]{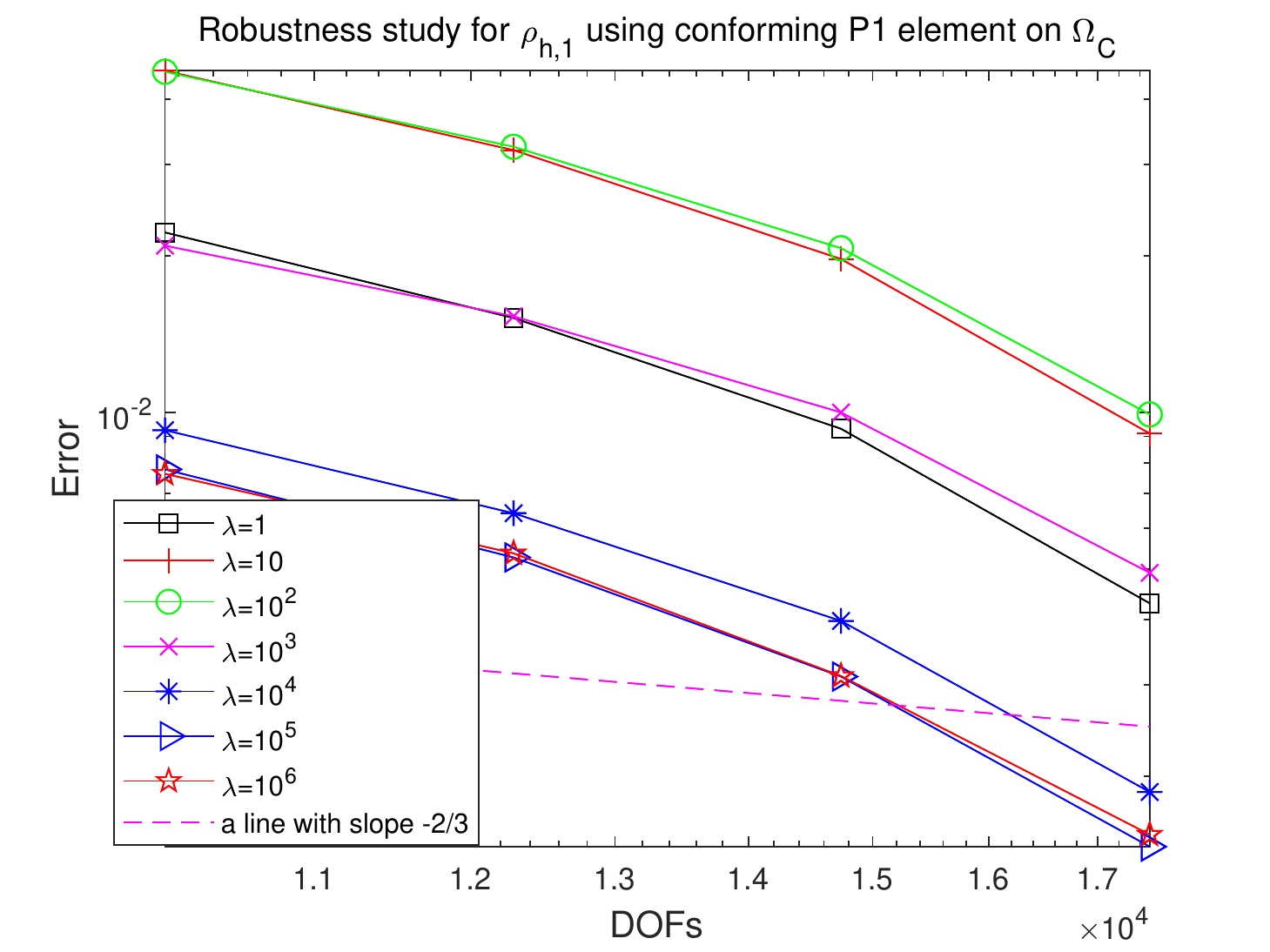}&\includegraphics[width=0.5\textwidth]{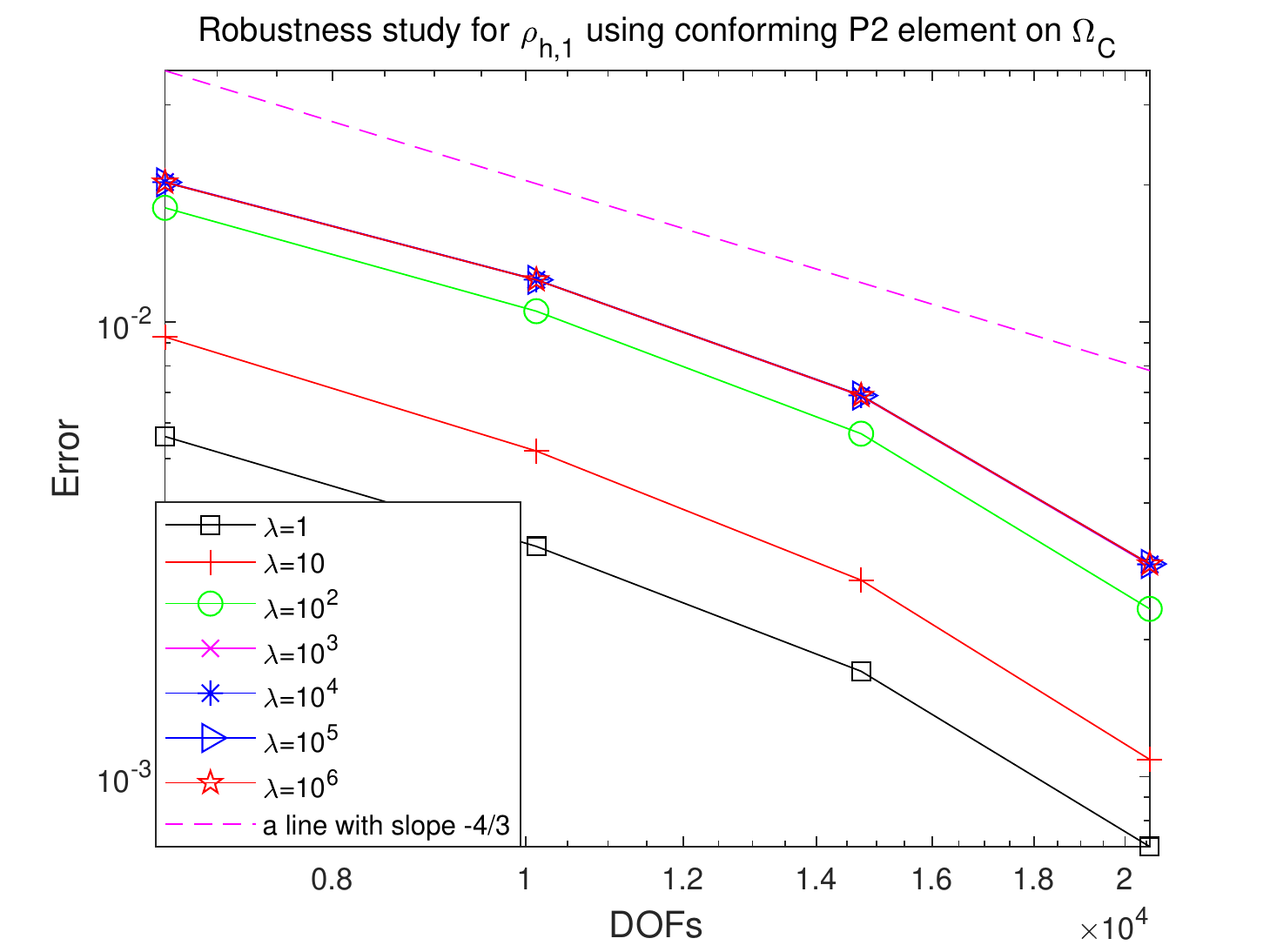}
\end{tabular}
\\{\textrm{\small\bf Figure 6. The robustness study for $\rho_{h,1}$  on domains using conforming P1 element (left column) and conforming P2 element (right column).}}
\end{figure}


%
%



\begin{thebibliography}{s29}

\bibitem{Brenner1992}S.C. Brenner, L.Y. Sung, Linear finite element methods for planar linear elasticity, Math. Comput. 59(1992) 321-338.

\bibitem{Hernandez2009}E. Hern$\acute{a}$ndez, Finite element approximation of the elasticity spectral problem on curved domains,
J. Comput. Appl. Math. 225(2009) 452-458.

\bibitem{Meddahi2013}S. Meddahi, D. Mora, R. Rodr\'{i}guez, Finite element spectral analysis for the mixed formulation of the elasticity equations, SIAM J. Numer. Anal. 51(2013) 1041-1063.

\bibitem{oden}J. Oden, S. Prudhomme, T. Westermann, J. Bass, M.E. Botkin, Error estimation of eigenfrequencies for elasticity and shell problems, Math. Models Methods Appl. Sci. 13(2003) 323-344.

\bibitem{Ovtchinnikov} E.E. Ovtchinnikov, L.S. Xanthis, Effective dimensional reduction algorithm for eigenvalue
problems for thin elastic structures: a paradigm in three dimensions, Proc. Natl. Acad. Sci. USA. 97(2000) 967-971.

\bibitem{Russo}A.D. Russo, Eigenvalue approximation by mixed non-conforming finite element methods: the
determination of the vibrational modes of a linear elastic solid, Calcolo. 51(2014) 563-597.

\bibitem{Walsh}T.F. Walsh, G.M. Reese, U.L. Hetmaniuk, Explicit a posteriori error estimates for eigenvalue
analysis of heterogeneous elastic structures, Comput. Methods Appl. Mech. Engrg. 196(2007) 3614-3623.

\bibitem{Dom¨ªnguez2021}S. Dom\'{\i}nguez, Steklov eigenvalues for the Lam\'{e} operator in linear elasticity, J. Comput. Appl. Math. 394(2021) 113558.


\bibitem{Kouhia}R. Kouhia, R. Stenberg, A linear nonconforming finite element method for nearly incompressible elasticity and Stokes flow, Comput. Methods Appl. Mech. Engrg. 124(3)(1995) 195-212.

\bibitem{Brezzi1991} F. Brezzi, M. Fortin, Mixed and Hybrid Finite Element Methods, Springer, Berlin, 1991.

\bibitem{Chapelle}D. Chapelle, R. Stenberg, Locking-free mixed stabilized finite element methods for bending-dominated shells, Centre de Recherche Mathematiques. CRM Prceedings  Lecture Notes. 21(1999) 81-94.

\bibitem{Vogelius} M. Vogelius, An analysis of the $p$-version of the finite element method for nearly incompressible materials, Numer. Math. 41(1983) 39-53.

\bibitem{Hansbo2002}P. Hansbo, M.G. Larson, Discontinuous Galerkin methods for incompressible and nearly incompressible elasticity by Nitsche's method, Comput. Methods Appl. Mech. Engrg. 191(2002) 1895-1908.

\bibitem{Wihler2004}T.P. Wihler, Locking-free DGFEM for elasticity problems in polygons, IMA J. Numer. Anal. 24(1)(2004) 45-75.
%
\bibitem{Wihler2006}T.P. Wihler, Locking-free adaptive discontinuous Galerkin FEM for linear elasticity problems, Math. Comput. 75(255)(2006) 1087-1102.

\bibitem{Arnold2002}DN. Arnold, F. Brezzi, B. Cockburn,  LD. Marini, Unified analysis of discontinuous Galerkin methods for elliptic problems, SIAM J. Numer. Anal. 39(5)(2002) 1749-1779.

\bibitem{Douglas1976}J. Douglas, T. Dupont, Interior penalty procedures for elliptic and parabolic Galerkin methods, pp. 207-216. Springer, Berlin, 1976.


\bibitem{Rusten1996}T. Rusten, P. Vassilevski, R. Winther, Interior penalty preconditioners for mixed finite element approximations of elliptic problems, Math. Comput. Am. Math. Soc. 65(214)(1996) 447-466.

\bibitem{Riviere2008}B. Rivi\`{e}re, Discontinuous Galerkin Methods for Solving Elliptic and Parabolic Equations, Theory and Implementation. Society for Industrial and Applied Mathematics, 2008.

\bibitem{Wihler2002}T.P. Wihler, Discontinuous Galerkin FEM for Elliptic Problems in Polygonal Domains, PhD thesis, Swiss Federal Institute of Technology Zurich, Diss. ETH No.14973(2002).

\bibitem{Antonietti2022}P.F. Antonietti, M. Botti, I. Mazzieri, S. Nati Poltri, A high-order discontinuous Galerkin method for the poro-elasto-acoustic problem on polygonal and polyhedral grids, SIAM J. Sci. Comput., 44(2022) B1-B28.

\bibitem{Antonietti2018}P.F. Antonietti, I. Mazzieri, High-order discontinuous Galerkin methods for the elastodynamics equation on polygonal and polyhedral meshes, Comput. Methods Appl. Mech. Engrg., 342(2018) 414-437.

\bibitem{Johnson1986}C. Johnson, J. Pitk\"{a}ranta, An analysis of the discontinuous Galerkin method for a scalar hyperbolic equation, Math. Comput. 46(173)(1986) 1-26.

\bibitem{Brezzi2004}F. Brezzi, LD. Marini, E. S\"{u}li, Discontinuous Galerkin methods for first-order hyperbolic problems, Math Models
Methods Appl Sci. 14(12)(2004) 1893-1903.

\bibitem{Johnson1993}C. Johnson, Discontinuous Galerkin finite element methods for second order hyperbolic problems, Comput. Methods Appl. Mech. Eng. 107(1993) 117-129.

\bibitem{Krivodonova2004}L. Krivodonova, J. Xin, JF. Remacle, N. Chevaugeon, JE. Flaherty, Workshop on innovative time integrators for PDEs detection and limiting with discontinuous Galerkin methods for hyperbolic conservation laws, Appl. Numer. Math. 48(3)(2004) 323-338.

\bibitem{Bassi1997}F. Bassi, S. Rebay, A high-order accurate discontinuous finite element method for the numerical solution of the
compressible Navier-Stokes equations, J. Comput. Phys. 131(1997) 267-279.

\bibitem{Pietro2009} D.Di Pietro, A. Ern, Discrete functional analysis tools for discontinuous Galerkin methods with application to the incompressible Navier-Stokes equations, Technical Report, 381(2009) CERMICS.

\bibitem{Cockburn1998b}B. Cockburn, C.W. Shu, The local discontinuous Galerkin method for time-dependent convection-diffusion systems,
SIAM J. Numer. Anal. 35(6)(1998) 2440-2463.

\bibitem{Ern2005}A. Ern, J. Proft, A posteriori discontinuous Galerkin error estimates for transient convection-diffusion equation, Appl. Math. Lett. 18(2005) 833-841.


\bibitem{Pietro2013}D.Di Pietro, S. Nicaise, A locking-free discontinuous Galerkin method for linear elasticity in locally nearly
incompressible heterogeneous media, Appl. Numer. Math. 63(2013) 105-116.

\bibitem{Babuska1991}I. Babu\v{s}ka, J.E. Osborn, Eigenvalue problems. In: Finite Element Methods (Part I). Handbook of Numerical Analysis, vol. 2, pp. 641-787. Elsevier Science Publishers North-Holand, 1991.

\bibitem{Ciarlet1991}P.G. Ciarlet, Basic error estimates for elliptic problems, Handbook of Numerical Analysis, 2(1991)17-351.

\bibitem{Ern2004} A. Ern, J.L. Guermond, Theory and Practice of Finite Elements, Volume 159, Applied Mathematical Sciences, Springer-Verlag, New York, 2004.

\bibitem{Nitche1971}J. Nitsche, $\ddot{U}$ber ein Variationsprinzip zur L$\ddot{o}$sung von Dirichlet-Problemen bei Verwendung von Teilr$\ddot{a}$umen, die keinen Randbedingungen unterworfen sind. Abh. Math. Sem. Univ. Hamburg, 36(1971)9-15.


\bibitem{Oden1998}J. Oden, I. Babu$\breve{s}$ka, C. Baumann, A discontinous hp finite element method for diffusion problems, J. Comp. Phys. 146(1998) 491-591.

\bibitem{Ortner2007}C. Ortner, E. S\"{u}li, Discontinuous Galerkin finite element approximation of nonlinear second-order
elliptic and hyperbolic systems, SIAM J. Numer. Anal. 45(2007) 1370-1397.



\bibitem{bernardi}C. Bernardi, F. Hecht, Error indicators for the mortar finite element discretization of Laplace equation,
Math. Comp. 71(240)(2001) 1371-1403.


\bibitem{Cai2011}Z. Cai, X. Ye, S. Zhang, Discontinuous Galerkin finite element methods for interface problems: a priori and a posteriori error estimations, SIAM J. Numer. Anal. 49(2011) 1761-1787.

\bibitem{Cai2017}Z. Cai, C. He, S. Zhang, Discontinuous finite element methods for interface problems: robust a priori and a posteriori error estimates, SIAM J. Numer. Anal. 55(2017) 400-418.

\bibitem{Bi2021}H. Bi, X. Zhang, Y. Yang, The nonconforming Crouzeix-Raviart element approximation and two-grid discretizations for the elastic eigenvalue problem, J. Comput. Math. http://www.global-sci.org/jcm, doi:10.4208/jcm.2201-m2020-0128.

\bibitem{Grisvard1985}P. Grisvard, Elliptic Problems in Nonsmooth Domains, Monogr. Stud. Math. 24, Pitman, Boston, 1985.


\bibitem{Brezzi1985}F. Brezzi, J. Douglas, D. Marini, Two families of mixed finite elements for second order elliptic problems, Numer. Math. 47(1985)217-235.

\bibitem{Ciarlet2013}P. Jr. Ciarlet, Analysis of the Scott-Zhang interpolation in the fractional order Sobolev spaces, J. Numer. Math.,
21(3) (2013), 173-180.

\bibitem{Brenner2007}S.C. Brenner, L.R. Scott, The Mathematical Theory of Finite Element Methods. 3rd ed. Spinger-Verlag, New York, 2007.


\bibitem{Gustafsson2020}T. Gustafsson, G.D. McBain, scikit-fem: A Python package for finite element assembly, Journal of Open Source Software, 5(2020) 2369.

\bibitem{Harris2020}C.R. Harris, K.J. Millman, S.J. Van Der Walt, R. Gommers, P. Virtanen, D. Cournapeau, E. Wieser, J. Taylor, S. Berg, N.J. Smith, et al., Array programming with NumPy, Nature, 585(2020) 357-362.

\bibitem{Virtanen2020}P. Virtanen, R. Gommers, T.E. Oliphant, M. Haberland, T. Reddy, D. Cournapeau, E. Burovski, P. Peterson, W. Weckesser, J. Bright, et al., SciPy 1.0: fundamental algorithms for scientific computing in Python, Nature methods, 17(2020) 261-272.


\bibitem{Ciarlet1978} P.G. Ciarlet, The Finite Element Method for Elliptic Problems, SIAM, 1978.
%


\end{thebibliography}
\end{document}